\newtheorem{theorem}{Theorem}[section]
\newtheorem{lemma}[theorem]{Lemma}
\newtheorem{remark}[theorem]{Remark}
\newtheorem{corollary}[theorem]{Corollary}
\theoremstyle{definition}
\newtheorem{definition}[theorem]{Definition}
\newtheorem{example}[theorem]{Example}
\begin{document}

\title{State morphism MV-algebras}

\author[A. Dvure\v{c}enskij, T. Kowalski, F. Montagna]{Anatolij Dvure\v{c}enskij$^1$, Tomasz Kowalski$^2$ and Franco Montagna$^3$}

\maketitle

\begin{center}  \footnote{Keywords: MV-algebra, state MV-algebra,
state morphism MV-algebra, varieties, subdirectly irreducible
algebra, cover variety.

AMS classification (2010):  06D35, 03B50.

AD thanks  for the support by Center of Excellence SAS -~Quantum
Technologies~-,  ERDF OP R\&D Projects CE QUTE ITMS 26240120009 and
meta-QUTE ITMS 26240120022, the grant VEGA No. 2/0032/09 SAV, and by
Slovak-Italian project SK-IT 0016-08. }
\small{Mathematical Institute,  Slovak Academy of Sciences\\
\v Stef\'anikova 49, SK-814 73 Bratislava, Slovakia\\
$^2$ Department of Mathematics and Statistics,
University of Melbourne\\ Parkville, VIC 3010, Australia\\
$^3$ Universit\`a Degli Studi di Siena Dipartimento di Scienze
Matematiche e Informatiche ``Roberto Magari"\\ Pian dei Mantellini
44,     I-53100 Siena, Italy\\
E-mail: {\tt dvurecen@mat.savba.sk}, \  {\tt kowatomasz@gmail.com}\\
{\tt montagna@unisi.it}}
\end{center}

\begin{abstract} We present a complete characterization of
subdirectly irreducible MV-algebras with internal states
(SMV-algebras). This allows us to classify subdirectly irreducible
state morphism MV-algebras (SMMV-algebras) and describe single
generators of the variety of SMMV-algebras,  and show that we have a
continuum of varieties of SMMV-algebras.
\end{abstract}

\section{Introduction}

States on MV-algebras have been introduced by Mundici in \cite{Mus}.
A \emph{state} on an MV-algebra $\mathbf{A}$ is a map $s$ from $A$
into $[0,1]$ such that:

(a) $s(1)=1$, and

(b) if $x\odot y=0$, then $s(x\oplus y)=s(x)+s(y)$.

Special states are the so called $[0,1]$\emph{-valuations} on
$\mathbf{A,}$ that is, the homomorphisms from $\mathbf{A}$ into the
standard MV-algebra $[0,1]_{MV}$ on $[0,1]$.

States are related to $[0,1]$-valuations by two important results.
First of all, $[0,1]$-valuations are precisely the \emph{extremal
states}, that is, those states that cannot be expressed as
non-trivial convex combinations of other states. Moreover, by the
Krein-Milman Theorem, every state belongs to the convex closure of
the set of all $[0,1]$-valuations with respect to the topology of
uniform convergence. Finally, every state coincides locally with a
convex combination of $[0,1]$-valuations (see \cite{Mub},
\cite{KM}). More precisely, given a state $s$ on an MV-algebra
$\mathbf{A}$ and given elements $a_{1},\ldots,a_{n}$ of $A$, there
are $n+1$ extremal states $ s_{1},\ldots,s_{n+1}$ and $n+1$ elements
$\lambda _{1},\ldots,\lambda _{n+1}$ of $ [0,1]$ such that
$\sum_{h=1}^{n+1}\lambda_h=1$ and for $j=1,\ldots,n$,
$\sum_{i=1}^{n+1}\lambda _{i}s_{i}(a_{j})=s(a_{j})$.

Another important relation between states and $[0,1]$-valuations is
the following: let $X_{A}$ be the set of $[0,1]$-valuations on
$\mathbf{A}$. Then $X_{A}$ becomes a compact Hausdorff subspace of
$[0,1]^{A}$ equipped with the Tychonoff topology. To every element
$a$ of $A$ we can associate its Gelfand transform $\widehat{a}$ from
$X_{A}$ into $[0,1]$, defined for all $v\in X_{A}$, by
$\widehat{a}(v)=v(a)$. Now Panti \cite{Pa} and Kroupa \cite{Kr}
independently showed that to any state $s$ on $\mathbf{A}$ it is
possible to associate a (uniquely determined) Borel regular
probability measure $\mu $ on $X_{A}$ such that for all $a\in A$ one
has $s(a)=\int \widehat{a}d\mu $. Hence, every state has an integral
representation.

Yet another important result motivating the use of states, related
to de Finetti's interpretation of probability in terms of bets, is
Mundici's characterization of coherence \cite{Mub}. That is, given
an MV-algebra $\mathbf{A}$, given $a_{1},\ldots,a_{n}\in A$ and
$\alpha_{1},\ldots,\alpha _{n}\in [0,1]$, the following are
equivalent:

(1) There is a state $s$ on $\mathbf{A}$ such that, for
$i=1,\ldots,n$, $s(a_{i})=\alpha _{i}$.

(2) For every choice of real numbers $\lambda
_{1},\ldots,\lambda_{n}$ there is a $[0,1]$-valuation $v$ such that
$\sum_{i=1}^{n}\lambda _{i}(\alpha _{i}-v(a_{i}))\geq 0$.

These results show that the notion of state on an MV-algebra is a very
important notion and the first one shows an important connection between
states and $[0,1]$-valuations. However, MV-algebras with a state are not
universal algebras, and hence they do not provide for an algebraizable logic
for reasoning on probability over many-valued events.

In \cite{FM} the authors find an algebraizable logic for this
purpose, whose equivalent algebraic semantics is the variety of
SMV-algebras. An SMV-algebra (see the next section for a precise
definition) is an MV-algebra $\mathbf{A}$ equipped with an operator
$\tau $ whose properties resemble the properties of a state, but,
unlike a state, is an internal unary operation (called also an {\it
internal state}) on $\mathbf{A}$ and not a map from $A$ into
$[0,1]$. The analogue for SMV-algebras of an extremal state (or
equivalently of a $[0,1]$-valuation) is the concept of \emph{state
morphism}. By this terminology we mean an idempotent endomorphism
from $\mathbf{A}$ into $\mathbf{A}$. MV-algebras equipped with a
state morphism form a variety, namely, the variety of SMMV-algebras,
which is a subvariety of the variety of SMV-algebras. Here below we
mention some motivations for the study of SMMV-algebras:

(1) Let $(\mathbf{A},\tau )$ be an SMV-algebra, and assume that
$\tau (\mathbf{A})$, the image of $\mathbf{A}$ under $\tau $, is
simple. Then $\tau (\mathbf{A})$ is isomorphic to a subalgebra of
$[0,1]_{MV}$, and $\tau$ may be regarded as a state on $\mathbf{A}$.
Moreover, by Di Nola's theorem \cite{DN}, $\mathbf{A}$ is isomorphic
to a subalgebra of $[0,1]^{*^{I}}$ for some ultrapower $[0,1]^{*}$
of $[0,1]_{MV}$ and for some index set $I$. Finally, using a result
by Kroupa \cite{Kr1} stating that any state on a subalgebra
$\mathbf{A}$ of an MV-algebra $\mathbf{B}$ can be extended to a
state on $\mathbf{B}$, we obtain that $\tau $ can be extended to a
state $\tau^{*}$ on $[0,1]^{*^{I}}$. Note that, after identifying a
real number $\alpha \in [0,1]$ with the function on $I$ which is
constantly equal to $\alpha$, $\tau ^{*}$ is also an internal state,
and it makes $[0,1]^{*^{I}}$ into an SMV-algebra. Moreover, by the
Krein-Milman theorem, for every real number $\varepsilon >0$ there
is a convex combination $\sum_{i=1}^{n}\lambda _{i}v_{i}$ of
$[0,1]$-valuations $v_{1},\ldots,v_{n}$ such that for every $a\in
A$, $|\tau(a)-\sum_{i=1}^{n}\lambda _{i}v_{i}(a)|<\varepsilon $.
After identifying $v_{i}(a)$ with the function from $I$ into
$[0,1]^{*}$ which is constantly equal to $v_{i}(a)$, these
valuations can be regarded as idempotent endomorphisms on
$[0,1]^{*^{I}}$, and hence each of them makes $[0,1]^{*^{I}}$ into
an SMMV-algebra. Summing up, if $(\mathbf{A},\tau )$ is an
SMV-algebra and $\tau (\mathbf{A})$ is simple, then $\tau $ can be
approximated by convex combinations of state morphisms on (an
extension of) $\mathbf{A}$.

(2) All subdirectly irreducible SMMV-algebras were described in
\cite{DiDv, DDL2}, but the description of all subdirectly
irreducible SMV-algebras remains open, \cite{FM}.

(3) As shown in \cite{DDL1}, if $(\mathbf{A},\tau )$ is an
SMV-algebra and $\tau ( \mathbf{A})$ belongs to a finitely generated
variety of MV-algebras, then $(\mathbf{A},\tau )$ is an
SMMV-algebra. In particular, MV-algebras from a finitely generated
variety only admit internal states which are state morphisms.

(4) A linearly ordered SMV-algebra is an SMMV-algebra, \cite{DDL1}.
Moreover, we will see that representable SMV-algebras form a variety
which is a subvariety of the variety of SMMV-algebras.

The goal of the present paper is to continue in the algebraic
investigations on SMMV-algebras which begun in \cite{DDL1} and in
\cite{DiDv, DDL2}.

The paper is organized as follows. After preliminaries in Section
2, we give in Section 3 a complete characterization of subdirectly
irreducible SMV-algebras. This solves an open problem posed in
\cite{FM}. In Section 4 we present a classification of subdirectly
irreducible SMMV-algebras introducing four types of subdirectly
irreducible SMMV-algebras. In Section 5, we describe some
prominent varieties of SMMV-algebras and their generators. In
particular, we answer in positive to an open question from
\cite{DiDv} that the diagonalization of the real interval $[0,1]$
generates the variety of SMMV-algebras. Section 6 shows that every
subdirectly irreducible SMMV-algebra is subdiagonal. Finally,
Section 7 describes an axiomatization of some varieties of
SMMV-algebras, including a full characterization of representable
SMMV-algebras. We show that in contrast of MV-algebras, there is a
continuum of varieties of SMMV-algebras. In addition, some open
problems are formulated.

\section{Preliminaries}

For all concepts of Universal Algebra we refer to \cite{BS}. For concepts of
many-valued logic, we refer to \cite{Ha}, and for MV-algebras in particular,
we will also refer to \cite{CDM}.

\begin{definition}{\rm
An \emph{MV-algebra} is an algebra $\mathbf{A}=(A,\oplus ,\lnot
,0),$ where $(A,\oplus ,0)$ is a commutative monoid, $\lnot $ is an
involutive unary operation on $A$, $1=\lnot 0$ is an absorbing
element, that is, $x\oplus 1=1$, and letting $x\rightarrow y=(\lnot
x)\oplus y$, the identity $(x\rightarrow y)\rightarrow
y=(y\rightarrow x)\rightarrow x$ holds.}
\end{definition}

In any MV-algebra $\mathbf{A}$, we further define $x\odot y=\lnot (\lnot
x\oplus \lnot y)$, $x\ominus y=\lnot (\lnot x\oplus y)$, $x\vee
y=(x\rightarrow y)\rightarrow y$ and $x\wedge y=x\odot (x\rightarrow y)$.
With respect to $\vee$ and $\wedge$, $\mathbf{A}$ becomes a distributive
lattice with top element $1$ and bottom element $0$.

We also define $nx$ for $x\in {\mathbf{A}}$ and natural number $n$ by
induction as follows: $0x=0$; $(n+1)x=nx\oplus x$.

MV-algebras constitute the equivalent algebraic semantics of
\emph{\L ukasiewicz logic} \L , cf. \cite{Ha} for an axiomatization.

The \emph{standard MV-algebra} is the MV-algebra
$[ 0,1]_{MV}=([0,1],\oplus ,\lnot ,0)$, where
$r\oplus s=\min \{r+s,1\}\quad \lnot r=1-r$.

For the derived operations one has:
$$r\ominus s=\max \{r-s,0\},\quad r\odot s=\max \{r+s-1,0\},\quad r\to s=\min
\{1-r+s,1\},$$
$$r\vee s=\max \{r,s\},\quad r\wedge s=\min \{r,s\}.$$

The variety of all MV-algebras is generated as a quasivariety by
$[0,1]_{MV}.$ It follows that in order to check the validity of an
equation or a quasi equation in all MV-algebras, it is sufficient to
check it in $[0,1]_{MV}$. We will tacitly use this fact in the
sequel.

\begin{definition}{\rm
A \emph{filter} of an MV-algebra ${\mathbf{A}}$ is a subset $F$ of
$A$ such that $1\in F$ and if $a$ and $a\rightarrow b$ are in $F$,
then $b\in F$.

Dually, an \emph{ideal} of ${\mathbf{A}}$ is a subset $J$ of $A$
such that $0\in J$ and if $a$ and $b\ominus a$ are in $J$, then
$b\in J$. A filter $F$ (an ideal $J$ respectively) of
${\mathbf{A}}$ is called \emph{proper} if $0\notin F$ ($1\notin J$
respectively) and \emph{maximal} if it is proper and it is not
properly contained in any proper filter (ideal respectively). The
radical, $Rad({\mathbf{A}})$, of ${\mathbf{A}}$, is the
intersection of all its maximal ideals, and the co-radical,
$Rad_{1}({\mathbf{A}}),$ of ${\mathbf{A}}$ is the intersection of
all its maximal filters. An MV-algebra ${\mathbf{A}}$ is called
\emph{semisimple} if $Rad({\mathbf{A}})=\left\{ 0\right\} $, and
is called \emph{local} if it has exactly one maximal ideal. }
\end{definition}

It is well-known (and easy to prove) that an MV-algebra $\mathbf{A}$
is semisimple iff $Rad_{1}({\mathbf{A}})=\left\{ 1\right\} $, and it
is local iff it has exactly one maximal filter.\medskip

\noindent Both the lattice of ideals and the lattice of filters of
an MV-algebra ${\mathbf{A}}$ are isomorphic to its congruence
lattice via the isomorphisms $\theta \mapsto \left\{ a\in A:(a,0)\in
\theta \right\} $ and $\theta \mapsto \left\{ a\in A:(a,1)\in \theta
\right\} $, respectively. The inverses of these isomorphisms are:

$J\mapsto \left\{ (a,b)\in A^{2}:\lnot (a\leftrightarrow b)\in
J\right\} $ and $F\mapsto \left\{ (a,b)\in A^{2}:a\leftrightarrow
b\in F\right\} $, respectively.

It follows that an MV-algebra is semisimple iff it has a subdirect
embedding into a product of simple MV-algebras.

\begin{definition}{\rm
A {\em Wajsberg hoop}  is a subreduct (subalgebra of a reduct) of an
MV-algebra in the language $\{1, \odot, \rightarrow\}$.}
\end{definition}

\begin{definition}{\rm
A {\em lattice ordered abelian group}  is an algebra ${\bf
G}=(G,+,-,0, \vee, \wedge)$ such that $( G,+,-,0)$ is an abelian
group, $( G, \vee, \wedge)$ is a lattice, and for all $x,y,z\in G$,
one has $x+(y \vee z)=(x+y) \vee (x+z)$.

\noindent A {\em strong unit} of a lattice ordered abelian group
${\bf G}$ is an element $u \in G$ such that for all $g \in G$ there
is $n \in {\bf N}$ such that $g \le
\underbrace{u+\cdots+u}_{n\,\,{\rm times}}$.}
\end{definition}

\noindent If ${\bf G}$ is a lattice-ordered abelian group and $u$ is
a strong unit of ${\bf G}$, then $\Gamma({\bf G},u)$ denotes the
algebra ${\bf A}$ whose universe is $\{x \in {G}: 0 \le x \le u\}$,
equipped with the constant $0$ and with the operations $\oplus$ and
$\neg$ defined by $x \oplus y=(x+y) \wedge u$ and $\neg x=u-x$. It
is well-known (\cite{Mu86}) that  $\Gamma({\bf G},u)$ is an
MV-algebra, and every MV-algebra can be represented as $\Gamma({\bf
G},u)$ for some lattice ordered abelian group ${\bf G}$ with strong
unit $u$.
\medskip

In the sequel, ${\bf Z}\times_{\rm lex} {\bf Z}$ denotes the direct
product of two copies of the group ${\bf Z}$ of integers, ordered
lexicographically, i.e., $(a,b)\leq (c,d)$ if either $a<c$ or $a=c$
and $b \leq d$. For every positive natural number $n$, ${\bf S}_n$
and ${\bf C}_n$ denote $\Gamma({\bf Z},n)$ and $\Gamma({\bf Z}\times
_{\rm lex}{\bf Z},(n,0))$ respectively. The algebra $\mathbf{C}_1$, that is
$\Gamma({\bf Z}\times_{\rm lex} {\bf Z},(1,0))$, is also referred to as Chang's
algebra.

\begin{definition}{\rm
A \emph{state} on an MV-algebra $\mathbf{A}$ (cf. \cite{Mus}) is a
map $s$ from $A$ into $[0,1]$ satisfying:
\begin{enumerate}
\item[(1)] $s(1)=1.$
\item[(2)] $s(x\oplus y)=s(x)+s(y)$ for all $x,y\in A$ such that $x\odot
y=0$.
\end{enumerate} }
\end{definition}

\begin{definition}{\rm
An \emph{MV-algebra} $\emph{with}$ $\emph{an}$ $\emph{internal}$
$\emph{state }$ (\emph{SMV-algebra} in the sequel) is an algebra
$(\mathbf{A},\tau )$ such that:
\begin{enumerate}
\item[(a)] $\mathbf{A}$ is an MV-algebra.
\item[(b)] $\tau $ is a unary operation on $\mathbf{A}$ satisfying the
following equations:
\item[(b$_1$)] $\tau (1)=1$.
\item[(b$_2$)] $\tau (x\oplus y)=\tau (x)\oplus
\tau (y\ominus (x\odot y))$.
\item[(b$_3$)] $\tau (\lnot x)=\lnot \tau
(x)$.
\item[(b$_4$)] $\tau (\tau (x)\oplus \tau (y))=\tau (x)\oplus \tau
(y)$.
\end{enumerate}

An operator $\tau$ is said to be also an {\it internal state}. An
operator $\tau$ is {\it faithful} if $\tau(a)=1$ implies $a=1.$

A \emph{state morphism MV-algebra} (\emph{SMMV-algebra} for short)
is an SMV-algebra further satisfying:

\begin{enumerate}
\item[(c)] $\tau (x\oplus y)=\tau (x)\oplus \tau (y)$.
\end{enumerate}}
\end{definition}

The following facts are easily provable:
\begin{lemma}
{\rm (see \cite{FM,DDL1})}. {\rm (1)} In an SMV-algebra
$(\mathbf{A},\tau )$, the following conditions hold:

\begin{enumerate}
\item[(1a)] $\tau (0)=0$.
\item[(1b)] If $x\odot y=0$, then
$\tau (x)\odot \tau (y)=0$ and $\tau (x\oplus y)=\tau (x)\oplus \tau
(y)$.
\item[(1c)] $\tau (\tau (x))=\tau (x)$.
\item[(1d)] $\tau
(\mathbf{A})$, the image of $\mathbf{A}$ under $\tau $, is an
MV-algebra, and $\tau $ is the identity on it.
\end{enumerate}
\noindent {\rm (2)} The following conditions on SMMV-algebras
hold:

\begin{enumerate}
\item[(2a)] In an SMMV-algebra $(\mathbf{A},\tau )$, $\tau
(\mathbf{A})$ is a retract of $\mathbf{A}$, that is, $\tau $ is a
homomorphism from $\mathbf{A}$ onto $\tau (\mathbf{A})$, the
identity map is an embedding from $\tau ( \mathbf{A})$ into
$\mathbf{A}$, and the composition $\tau \circ {\rm Id}_{\tau (A)} $,
that is, the restriction of $\tau $ to $\tau (\mathbf{A})$ is the
identity on $\tau (\mathbf{A})$.
\item[(2b)] An
algebra $(\mathbf{A},\tau )$ is an SMMV-algebra iff $\mathbf{A}$ is
an MV-algebra and $\tau $ is an idempotent endomorphism on
$\mathbf{A}$.
\item[(2c)]
An SMV-algebra $(\mathbf{A},\tau )$ is an SMMV-algebra iff it
satisfies $\tau (x\vee y)=\tau (x)\vee \tau (y)$ iff it satisfies
$\tau (x\wedge y)=\tau (x)\wedge \tau (y)$.
\item[(2d)] Any
linearly ordered SMV-algebra is an SMMV-algebra.
\end{enumerate}
\end{lemma}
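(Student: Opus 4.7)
The plan is to establish part (1) first, since its consequences are used repeatedly in part (2). Items (1a), (1c), (1d) follow by direct manipulation of the defining identities (b$_1$)--(b$_4$): (1a) by applying (b$_3$) to $x=1$, giving $\tau(0)=\tau(\lnot 1)=\lnot\tau(1)=0$; (1c) by substituting $y=0$ in (b$_4$) and invoking (1a), which yields $\tau(\tau(x))=\tau(\tau(x)\oplus\tau(0))=\tau(x)\oplus\tau(0)=\tau(x)$; and (1d) by noting that (b$_3$) and (b$_4$) ensure closure of $\tau(\mathbf{A})$ under $\lnot$ and $\oplus$, with (1c) making $\tau$ the identity on its image. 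The useful intermediate fact is monotonicity of $\tau$, which I would prove by writing $y=x\oplus(y\ominus x)$ when $x\le y$ (so that $x\odot(y\ominus x)=0$) and invoking (b$_2$). Monotonicity together with $y\odot x=0\Leftrightarrow y\le\lnot x$ and (b$_3$) then yields $\tau(x)\odot\tau(y)=0$ in (1b), and the additivity half follows from (b$_2$) since $y\ominus 0=y$.

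For part (2), the central statement is (2b). The forward direction is immediate: in an SMMV-algebra, (c) together with (b$_1$), (b$_3$) and (1a) make $\tau$ an MV-endomorphism, while (1c) gives idempotence. For the converse, I would have to verify (b$_2$) and (b$_4$) from the idempotent-endomorphism hypothesis alone. For (b$_2$) I would use the standard MV-identity $x\oplus y=x\oplus(y\ominus(x\odot y))$ and then apply the homomorphism property of $\tau$; (b$_4$) is then immediate from (c) and idempotence. Statement (2a) is simply a reformulation of (2b) in retract language: (c) and (b$_3$) make $\tau$ a surjective homomorphism onto $\tau(\mathbf{A})$, the inclusion is obviously an embedding, and (1c) is the splitting condition.

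The remaining items (2c) and (2d) follow easily. For (2c), the forward direction comes from (2b), as homomorphisms preserve $\vee$ and $\wedge$. For the converse, assuming that $\tau$ preserves $\wedge$, I would use the MV-identity $x\oplus y=(x\wedge\lnot y)\oplus y$ together with $(x\wedge\lnot y)\odot y=0$; then (1b) gives $\tau(x\oplus y)=\tau(x\wedge\lnot y)\oplus\tau(y)=(\tau(x)\wedge\lnot\tau(y))\oplus\tau(y)=\tau(x)\oplus\tau(y)$. The $\vee$-case is dual. Finally, (2d) is an immediate corollary of (2c): in a linearly ordered algebra $\vee$ and $\wedge$ reduce to $\max$ and $\min$, which monotonicity of $\tau$ (already established for (1b)) automatically preserves.

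The main obstacle is spotting the two small MV-identities that drive the nontrivial converses, namely $x\oplus y=x\oplus(y\ominus(x\odot y))$ for (2b) and $x\oplus y=(x\wedge\lnot y)\oplus y$ with $(x\wedge\lnot y)\odot y=0$ for (2c). Both can be checked by cases in $[0,1]_{MV}$ and lifted to all MV-algebras via the quasivariety generation fact recorded in the preliminaries; once they are in hand, the entire lemma reduces to bookkeeping.
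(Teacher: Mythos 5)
Your proof is correct: all the steps check out, including the two auxiliary MV-identities $x\oplus y=x\oplus(y\ominus(x\odot y))$ and $x\oplus y=(x\wedge\lnot y)\oplus y$ (both verifiable in $[0,1]_{MV}$ and lifted to all MV-algebras), the monotonicity argument via (b$_2$), and the De Morgan duality reducing the $\vee$-case of (2c) to the $\wedge$-case. The paper itself offers no proof of this lemma, merely citing \cite{FM,DDL1}, and your derivation is the standard one found there, so there is nothing to compare beyond noting that your write-up soundly fills in what the paper leaves to the references.
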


\section{Subdirectly irreducible SMV-algebras}

In this section we characterize and classify subdirectly irreducible
SMV-algebras which  answers to an open problem posed in \cite{FM}.
Our result also characterizes subdirectly irreducible SMMV-algebras.

\begin{definition}{\rm
Let  $(\mathbf A,\tau )$ be any SMV-algebra. Any filter $F$ of
$\mathbf A$ such that $\tau(F)\subseteq F$ is said to be a
$\tau$-{\it filter}.

Let $\tau (A)=\left\{ \tau (a):a\in A\right\} $ and
$F_\tau(A)=\left\{ a\in A:\tau (a)=1\right\} $. Clearly, $\tau
(\mathbf{A})=(\tau(A), \oplus, \neg,0)$ is a subalgebra of
$\mathbf{A}$ and $F_\tau(A)$ is a $\tau$-filter of $\mathbf{A}$, and
hence $\mathbf{F}_\tau(\mathbf{A} ) =(F_\tau(A), \to, 0,1)$ is a
Wajsberg subhoop of $\mathbf{A}$. Say that two Wajsberg subhoops,
$\mathbf{B}$ and $\mathbf{C},$ of an MV-algebra $\mathbf{A}$  have
the \emph{disjunction property} if for all $x\in B$ and $y\in C$, if
$x\vee y=1$, then either $x=1$ or $y=1$. }
\end{definition}

We recall that $\tau$-filters are in a bijection with
SMV-congruences, and hence an SMV-algebra is subdirectly irreducible
iff it has a minimum $\tau$-filter.

\begin{lemma}\label{necessary}
Suppose that $(\mathbf{A},\tau )$ is a subdirectly irreducible
SMV-algebra. Then:
\begin{enumerate}
\item If $F_\tau(A)=\left\{ 1\right\} $, then
$\tau (\mathbf{A})$ is subdirectly irreducible.
\item $\mathbf{F}_\tau (\mathbf{A})$ is (either trivial or) a subdirectly
irreducible hoop.
\item
$\mathbf{F}_\tau(\mathbf{A})$ and $\tau (\mathbf{A})$ have the
disjunction property.
\end{enumerate}
\end{lemma}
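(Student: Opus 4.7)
The plan is to exploit the standing assumption that $(\mathbf{A},\tau)$ is subdirectly irreducible: by the remark preceding the lemma, this is equivalent to $\mathbf{A}$ possessing a \emph{minimum} nontrivial $\tau$-filter $M$. Two standard properties of $\tau$ will be used throughout. First, $\tau$ acts as the identity on $\tau(\mathbf{A})$ (part (1d) of the preceding lemma), and since $\tau(A)$ is closed under $\odot$, $\tau$ preserves $\odot$ on $\tau(A)$. Second, $\tau$ is order-preserving: if $x\leq y$ then $y=x\oplus(y\ominus x)$ with $x\odot(y\ominus x)=0$, so part (1b) of the preceding lemma gives $\tau(y)=\tau(x)\oplus\tau(y\ominus x)\geq\tau(x)$. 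I shall also use that $F_\tau(A)$ is itself a $\tau$-filter, as recorded in Definition 3.1.

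For part (1), the plan is to show that $M\cap\tau(A)$ is the minimum nontrivial filter of the MV-algebra $\tau(\mathbf{A})$. Nontriviality: picking any $a\in M\setminus\{1\}$, closure of $M$ under $\tau$ puts $\tau(a)$ in $M\cap\tau(A)$, and $\tau(a)\neq 1$ because otherwise $a\in F_\tau(A)=\{1\}$. Minimality: given any nontrivial filter $G$ of $\tau(\mathbf{A})$, the filter $F$ of $\mathbf{A}$ generated by $G$ is a $\tau$-filter---if $a\geq g_1\odot\cdots\odot g_n$ with $g_i\in G\subseteq\tau(A)$, then by monotonicity and the identity action of $\tau$ on $\tau(A)$, $\tau(a)\geq\tau(g_1\odot\cdots\odot g_n)=g_1\odot\cdots\odot g_n\in F$. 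Hence $M\subseteq F$, while an easy upward-closure check gives $F\cap\tau(A)=G$, so $M\cap\tau(A)\subseteq G$.

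For part (2), assume $\mathbf{F}_\tau(\mathbf{A})$ is nontrivial. The key observation is that for any filter $G$ of the Wajsberg subhoop $\mathbf{F}_\tau(\mathbf{A})$, the filter of $\mathbf{A}$ generated by $G$ lies inside $F_\tau(A)$ (because $F_\tau(A)$ is itself a filter of $\mathbf{A}$ containing $G$), and is therefore automatically a $\tau$-filter, in fact coinciding with $G$. Applying this with $G=F_\tau(A)$ shows that $F_\tau(A)$ is a $\tau$-filter, hence $M\subseteq F_\tau(A)$ and $M$ is a nontrivial filter of $\mathbf{F}_\tau(\mathbf{A})$; applying it for arbitrary nontrivial $G$ gives $M\subseteq G$. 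Thus $M$ is the minimum nontrivial filter of $\mathbf{F}_\tau(\mathbf{A})$, which is therefore subdirectly irreducible as a hoop.

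For part (3), suppose for contradiction that $x\in F_\tau(A)$, $y\in\tau(A)$ satisfy $x\vee y=1$ with $x\neq 1\neq y$, and let $F(x),F(y)$ denote the principal filters of $\mathbf{A}$ generated by $x$ and $y$. Both are nontrivial $\tau$-filters: $F(x)\subseteq F_\tau(A)$ since $F_\tau(A)$ is a filter containing $x$, so $\tau(F(x))\subseteq\{1\}\subseteq F(x)$, while for $F(y)$ the identity $\tau(y^n)=y^n$ (valid because $y^n\in\tau(A)$) together with monotonicity gives $\tau(F(y))\subseteq F(y)$. Consequently $M\subseteq F(x)\cap F(y)$. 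The MV-identity $(x\vee y)^n=x^n\vee y^n$, which holds in $[0,1]_{MV}$ and hence in every MV-algebra, together with $x\vee y=1$ forces $x^k\vee y^k=1$ for every $k$; since any element of $F(x)\cap F(y)$ dominates some $x^n\vee y^m\geq x^k\vee y^k=1$ for $k=\max(n,m)$, we conclude $F(x)\cap F(y)=\{1\}$, and hence $M=\{1\}$, contradicting subdirect irreducibility. The main delicacy throughout is checking that the various sets built from filters of the relevant subalgebras pull back to $\tau$-filters of $\mathbf{A}$; once the identity action and monotonicity of $\tau$ are in place, each such verification is short.
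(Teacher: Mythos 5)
Your proof is correct and follows essentially the same route as the paper's: working with the minimum nontrivial $\tau$-filter, showing its intersection with $\tau(A)$ (resp.\ itself, inside $F_\tau(A)$) is the monolith of $\tau(\mathbf{A})$ (resp.\ of $\mathbf{F}_\tau(\mathbf{A})$), and for (3) noting the principal filters generated by $x$ and $y$ are $\tau$-filters and using $(x\vee y)^n=x^n\vee y^n$. You merely spell out some verifications (monotonicity of $\tau$, the transfer between hoop/MV-filters of the subalgebras and $\tau$-filters of $\mathbf{A}$) that the paper leaves terse.
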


\begin{proof}
Let $F$ denote the minimum filter of $\mathbf{A}.$  (1) Suppose
$F_\tau(A)=\left\{ 1\right\} $. If $\tau (A)\cap F\neq \left\{
1\right\} $, then $\tau (A)\cap F$ is the minimum non trivial filter
of $ \tau (\mathbf{A})$ and $\tau (\mathbf{A})$ is subdirectly
irreducible. If $\tau (A)\cap F=\left\{ 1\right\} $, then for all
$x\in F$, $\tau (x)=1$ (because $\tau (x)\in \tau (A)\cap F$) and
$F\subseteq F_\tau(A)=\left\{ 1\right\} $ is the trivial filter, a
contradiction.

(2) Suppose that $\mathbf{F}_\tau(\mathbf{A})$ is nontrivial. Then
$F_\tau(A)$ is a nontrivial $\tau $-filter. If $(\mathbf{A},\tau )$
is subdirectly irreducible, it has a minimum nontrivial $\tau
$-filter, $F$ say. So, $F \subseteq F_\tau(A)$, and hence $F$ is the
minimum non trivial filter of $\mathbf{F}_\tau(\mathbf{A})$. Hence,
$\mathbf{F}_\tau(\mathbf{A})$ is subdirectly irreducible.

(3) Suppose, by way of contradiction, that for some $x\in F_\tau(A)$
and $y=\tau (y)\in \tau (A)$ one has $x<1$, $y<1$ and $x\vee y=1$.
Then since the MV-filters generated by $x$ and by $y$, respectively,
are $\tau $-filters (easy to verify), they both contain $F$. Hence,
the intersection of these filters contains $F$. Now let $c<1$ be in
$F$. Then there is a natural number $n$ such that $x^{n}\leq c$ and
$y^{n}\leq c$. It follows that $ 1=(x\vee y)^{n}=x^{n}\vee y^{n}\leq
c$, a contradiction.
\end{proof}

\begin{corollary}
If $(\mathbf{A},\tau )$ is subdirectly irreducible, then $\tau
(\mathbf{A})$ and $\mathbf{F}_\tau(\mathbf{A})$ are linearly
ordered.
\end{corollary}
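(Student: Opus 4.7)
The plan is to combine Lemma~\ref{necessary} with two classical facts from universal algebra applied to MV-algebras and Wajsberg hoops: every subdirectly irreducible MV-algebra is a chain, and every subdirectly irreducible Wajsberg hoop is a chain. First, I would dispatch $\mathbf{F}_\tau(\mathbf{A})$: by Lemma~\ref{necessary}(2) it is either trivial or subdirectly irreducible as a Wajsberg hoop, so in either case it is linearly ordered.

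For $\tau(\mathbf{A})$ I would split according to whether $F_\tau(A)=\{1\}$. In the affirmative subcase, Lemma~\ref{necessary}(1) says $\tau(\mathbf{A})$ is subdirectly irreducible as an MV-algebra, hence a chain. In the remaining subcase $F_\tau(A)\neq\{1\}$ the key idea is to use the disjunction property from Lemma~\ref{necessary}(3) to transport any hypothetical failure of linearity of $\tau(\mathbf{A})$ into a failure of linearity of $\mathbf{F}_\tau(\mathbf{A})$, which the first step has already ruled out. Explicitly, suppose for contradiction that $\tau(\mathbf{A})$ is not linearly ordered; using the MV-identity $(a\to b)\vee(b\to a)=1$, one obtains $u,v\in\tau(A)$ with $u,v<1$ and $u\vee v=1$. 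Pick some $x\in F_\tau(A)$ with $x<1$. The disjunction property applied to the pairs $(x,u)$ and $(x,v)$ forces $x\vee u<1$ and $x\vee v<1$.

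Monotonicity of $\tau$ is direct from the SMV-axioms: writing $x\vee u=x\oplus(u\ominus x)$ and noting $x\odot(u\ominus x)=0$, axiom (b$_2$) reduces to $\tau(x\vee u)=\tau(x)\oplus\tau(u\ominus x)\geq\tau(x)$. Hence $\tau(x\vee u)\geq\tau(x)=1$, so $x\vee u\in F_\tau(A)$, and likewise $x\vee v\in F_\tau(A)$. But $(x\vee u)\vee(x\vee v)=x\vee(u\vee v)=x\vee 1=1$, so $\mathbf{F}_\tau(\mathbf{A})$ contains two elements strictly below $1$ whose join is $1$, contradicting its linear ordering. The main obstacle is recognizing how to import the mixed disjunction property (between $F_\tau(A)$ and $\tau(A)$) into an internal contradiction in $\mathbf{F}_\tau(\mathbf{A})$; once this is seen, the only computation required is the easy monotonicity check for $\tau$ and closure of $F_\tau(A)$ under binary joins.
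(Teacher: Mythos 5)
Your proof is correct, but it takes a genuinely different route from the paper's on the half of the statement concerning $\tau(\mathbf{A})$. The paper simply cites \cite{FM} for the fact that $\tau(\mathbf{A})$ is linearly ordered in any subdirectly irreducible SMV-algebra, and only argues internally for $\mathbf{F}_\tau(\mathbf{A})$ (via Lemma \ref{necessary}(2) and the standard fact that subdirectly irreducible Wajsberg hoops are chains) --- exactly as you do for that half. For $\tau(\mathbf{A})$ you instead give a self-contained derivation from Lemma \ref{necessary} alone: when $F_\tau(A)=\{1\}$ you invoke part (1) plus the fact that subdirectly irreducible MV-algebras are chains, and when $F_\tau(A)\neq\{1\}$ you use prelinearity to extract $u,v<1$ in $\tau(A)$ with $u\vee v=1$ from a hypothetical failure of linearity, then use the disjunction property (part (3)) to see $x\vee u<1$ and $x\vee v<1$ for a fixed $x<1$ in $F_\tau(A)$, and the identity $x\vee u=x\oplus(u\ominus x)$ with $x\odot(u\ominus x)=0$ together with axiom (b$_2$) to see that both joins lie in $F_\tau(A)$, contradicting the already-established linearity of $\mathbf{F}_\tau(\mathbf{A})$; all these computations check out. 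What your approach buys is independence from the external result in \cite{FM}: the corollary follows purely from the three conditions of Lemma \ref{necessary} together with the two standard chain facts the paper already uses, which also makes transparent why conditions (2) and (3) suffice to control $\tau(\mathbf{A})$ when $F_\tau(A)$ is nontrivial. The paper's version is shorter but less self-contained.
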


\begin{proof} That $\tau (\mathbf{A})$ is linearly ordered follows
from \cite{FM}. As regards to $\mathbf{F}_\tau(\mathbf{A})$, by Lemma
\ref{necessary}, $\mathbf{F}_\tau(\mathbf{A})$ is a (possibly
trivial) subdirectly irreducible Wajsberg hoop, and hence it is
linearly ordered.
\end{proof}

\begin{theorem}\label{characterization}
Suppose that $(\mathbf{A},\tau )$ is an SMV-algebra satisfying
conditions {\rm (1), (2)} and {\rm (3)} in Lemma {\rm
\ref{necessary}}. Then $ (\mathbf{A},\tau )$ is subdirectly
irreducible, and hence, the above conditions constitute a
characterization of subdirectly irreducible SMV-algebras.
\end{theorem}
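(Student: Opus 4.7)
The plan is to exhibit, under hypotheses~(1)--(3) of Lemma~\ref{necessary}, a minimum nontrivial $\tau$-filter of $(\mathbf{A},\tau)$; by the remark preceding that lemma, this amounts to subdirect irreducibility. The natural split is on whether $F_\tau(A)=\{1\}$, and in each case one builds an explicit candidate for the minimum $\tau$-filter.

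\textbf{Case 1: $F_\tau(A)=\{1\}$.} Hypothesis~(1) gives a minimum nontrivial filter $F_0\subseteq\tau(A)$ of the MV-algebra $\tau(\mathbf{A})$. I would take $F^\ast$ to be the MV-filter of $\mathbf{A}$ generated by $F_0$, i.e.\ its upward closure in $\mathbf{A}$. Since $\tau$ restricts to the identity on $\tau(A)$ and is monotone (an easy consequence of the SMV axioms via (1b) of the preliminary lemma), $F^\ast$ is $\tau$-closed. To check minimality, let $G$ be any nontrivial $\tau$-filter and pick $1\ne a\in G$: then $\tau(a)\in G$, and if $\tau(a)=1$ we would get $a\in F_\tau(A)=\{1\}$, so $\tau(a)<1$. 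Hence $\tau(a)$ is a nontrivial element of the filter $G\cap\tau(A)$ of $\tau(\mathbf{A})$; minimality of $F_0$ yields $F_0\subseteq G$, and upward closure gives $F^\ast\subseteq G$.

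\textbf{Case 2: $F_\tau(A)\neq\{1\}$.} Hypothesis~(2) supplies a minimum nontrivial filter $F_1\subseteq F_\tau(A)$ of the hoop $\mathbf{F}_\tau(\mathbf{A})$. Because $F_\tau(A)$ is itself a filter of $\mathbf{A}$, $F_1$ is automatically an MV-filter of $\mathbf{A}$, and $\tau(F_1)\subseteq\{1\}\subseteq F_1$, so $F_1$ is a $\tau$-filter. For any nontrivial $\tau$-filter $G$, it suffices to show $G\cap F_\tau(A)\neq\{1\}$, for then the minimality of $F_1$ inside $F_\tau(A)$ gives $F_1\subseteq G\cap F_\tau(A)\subseteq G$. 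I would prove this nonemptiness by contradiction: assume $G\cap F_\tau(A)=\{1\}$, fix $y\in F_1\setminus\{1\}$ and $1\ne a\in G$, and, exactly as in Case~1, deduce $\tau(a)<1$. Now consider $y\vee\tau(a)$: monotonicity of $\tau$ gives $\tau(y\vee\tau(a))\ge\tau(y)=1$, so $y\vee\tau(a)\in F_\tau(A)$; and $y\vee\tau(a)\ge\tau(a)\in G$ places it in $G$. Hence $y\vee\tau(a)\in G\cap F_\tau(A)=\{1\}$, i.e.\ $y\vee\tau(a)=1$. Hypothesis~(3) (the disjunction property applied to $y\in F_\tau(A)$ and $\tau(a)\in\tau(A)$) then forces $y=1$ or $\tau(a)=1$, contradicting our choices.

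The delicate point is the final disjunction argument in Case~2: this is where all three hypotheses interact simultaneously, since (2) supplies the witness $y\in F_1$, the SMV axioms place $y\vee\tau(a)$ into $F_\tau(A)$, and (3) produces the contradiction. Everything else is a routine verification that the proposed $F^\ast$ or $F_1$ really is a $\tau$-filter and sits below an arbitrary nontrivial $\tau$-filter.
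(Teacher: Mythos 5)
Your proposal is correct and takes essentially the same route as the paper: the same case split on whether $F_\tau(A)=\{1\}$, the same candidate minimum $\tau$-filters (the filter of $\mathbf{A}$ generated by the minimum nontrivial filter of $\tau(\mathbf{A})$, respectively the minimum nontrivial filter of the hoop $\mathbf{F}_\tau(\mathbf{A})$), and the same use of the disjunction property to show that every nontrivial $\tau$-filter meets $F_\tau(A)$ nontrivially. The only differences are cosmetic: your Case~1 minimality argument via the nontrivial filter $G\cap\tau(A)$ is a slight streamlining of the paper's contrapositive argument using linearity of $\tau(\mathbf{A})$, and your Case~2 is the paper's direct argument run by contradiction.
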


\begin{proof}
Suppose first that $F_\tau(A)=\left\{ 1\right\} $ and that $\tau
(\mathbf{A})$ is subdirectly irreducible. Let $F_{0}$ be the minimum
nontrivial filter of $\tau (\mathbf{A})$ and let $F$ be the
MV-filter of $\mathbf A$ generated by $F_{0}$. Then $F$ is a $\tau
$-filter. Indeed, if $x\in F$, then there is $\tau (a)\in F_{0}$ and
a natural number $n$ such that $\tau (a)^{n}\leq x$. It follows that
$ \tau (x)\geq \tau (\tau (a)^{n})=\tau (a)^{n}$, and $\tau (x)\in
F$.

We claim that $F$ is the minimum nontrivial $\tau $-filter of
$(\mathbf{A},\tau )$. First of all, $\tau (\mathbf{A})$, being a
subdirectly irreducible MV-algebra, is linearly ordered. Now in
order to prove that $F$ is the minimum non trivial $ \tau $-filter
of $(\mathbf{A},\tau )$, it suffices to prove that every $\tau
$-filter $G$ not containing $F$ is trivial. Now let $c<1$ in
$F\backslash G$. Then since $F_\tau(A)=\left\{ 1\right\} $, $\tau
(c)<1$. Next, let $d\in G$. Then $ \tau (d)\in G$, and for every $n$
it cannot be $\tau (d)^{n}\leq \tau (c)$, otherwise $\tau (c)\in G$.
Hence, for every $n$, $\tau (c)<\tau (d)^{n}$, and hence $\tau (c)$
does not belong to the $\tau $-filter of $\tau (\mathbf{A})$
generated by $\tau (d)$. By the minimality of $F$ in $\tau
(\mathbf{A}),$  $\tau (d)=1$ and since $F_\tau(A)=\left\{ 1\right\}
$, we conclude that $d=1$ and $G $ is trivial, as desired.

Now suppose that $\mathbf{F}_\tau(\mathbf{A})$ is nontrivial. By
condition (2), $\mathbf{F}_\tau(\mathbf{A})$ is subdirectly
irreducible. Thus, let $F$ be the minimum filter of
$\mathbf{F}_\tau(\mathbf{A})$. Then $F$ is a non trivial $\tau
$-filter, and it is left to prove that $F$ is the minimum non
trivial $\tau $-filter of $(\mathbf{A},\tau )$. Let $G$ be any non
trivial $\tau $-filter of $(\mathbf{A},\tau )$. If $G\subseteq
F_\tau(A)$, then it contains the minimal filter, $F$, of
$\mathbf{F}_\tau(\mathbf{A})$, and $F\subseteq G$. Otherwise, $G$
contains some $x\notin F_\tau(A)$, and hence it contains $\tau
(x)<1$. Now by the disjunction property, for all $y<1$ in
$F_\tau(A)$, $\tau (x)\vee y<1$ and $\tau (x)\vee y\in F_\tau(A)\cap
G$. Thus, $G$ contains the filter generated by $\tau (x)\vee y$,
which is a non trivial filter of $ \mathbf{F}_\tau(\mathbf{A})$, and
hence it contains $F$, the minimum non trivial filter of $
\mathbf{F}_\tau(\mathbf{A})$. This settles the claim.
\end{proof}

\begin{theorem}
{\rm (1), (2)} and {\rm (3)} are independent conditions, and
hence none of them is redundant in Theorem~\ref{characterization}.
\end{theorem}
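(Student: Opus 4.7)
The plan is to construct, for each $i \in \{1,2,3\}$, an SMMV-algebra (hence SMV-algebra) violating condition $(i)$ while satisfying the other two. Since Lemma~\ref{necessary} already gives that subdirect irreducibility forces all three conditions, any such example is automatically not subdirectly irreducible, so no additional verification is needed.

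To violate (1), I would take $\mathbf{A} = \mathbf{S}_2 \times \mathbf{S}_2$ with $\tau = \mathrm{id}_A$. Then $F_\tau(A) = \{1\}$, so $\mathbf{F}_\tau(\mathbf{A})$ is trivial and conditions (2), (3) hold vacuously, while $\tau(\mathbf{A}) = \mathbf{A}$ is a direct product of two non-trivial simple MV-algebras, hence not subdirectly irreducible, so (1) fails. To violate (3), I would take the Boolean cube $\mathbf{A} = \mathbf{2}^3$ equipped with the idempotent endomorphism $\tau(x,y,z) = (x,y,y)$. One checks $F_\tau(A) = \{(1,1,0),(1,1,1)\}$: this is non-trivial (so (1) is vacuous) and, being the two-element Wajsberg hoop, is subdirectly irreducible (so (2) holds). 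However $(1,1,0) \in F_\tau(A)$ and $(0,1,1) \in \tau(A)$ are both strictly below $1$, yet their join is $1$; so the disjunction property, and hence (3), fails.

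The hardest case, which I expect to be the main obstacle, is the failure of (2): any attempt to break the subdirect irreducibility of $\mathbf{F}_\tau(\mathbf{A})$ by a product decomposition tends to simultaneously destroy the disjunction property between $\mathbf{F}_\tau(\mathbf{A})$ and $\tau(\mathbf{A})$. The trick is to force $\tau(\mathbf{A})$ to be the two-element Boolean subalgebra $\{0,1\}$, so that the disjunction property becomes automatic: the only $y \in \tau(A)$ with $y<1$ is $0$, and $x \vee 0 = x$. Concretely, take $\mathbf{A} = \mathbf{C}_1 \times \mathbf{C}_1$ and let $\sigma \colon \mathbf{C}_1 \to \mathbf{C}_1$ be the idempotent endomorphism collapsing $Rad(\mathbf{C}_1)$ to $0$ and $Rad_1(\mathbf{C}_1)$ to $1$; set $\tau(x,y) = (\sigma(x),\sigma(x))$, which is an idempotent MV-endomorphism as the composition of the first projection, $\sigma$, and the diagonal embedding. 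Then $\tau(A) = \{(0,0),(1,1)\}$, so (1) is vacuous and (3) holds by the observation above. On the other hand $F_\tau(A) = Rad_1(\mathbf{C}_1) \times \mathbf{C}_1$, viewed as a Wajsberg subhoop of $\mathbf{A}$, is a genuine direct product of two non-trivial Wajsberg hoops; the two projection congruences have trivial intersection, so $\mathbf{F}_\tau(\mathbf{A})$ is not subdirectly irreducible, and (2) fails.
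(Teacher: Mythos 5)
Your proposal is correct, and its logical frame is sound: for each $i$ you exhibit an SMMV-algebra satisfying the other two conditions but violating condition $(i)$, and Lemma~\ref{necessary} then makes the failure of subdirect irreducibility automatic, so the remaining two conditions cannot imply subdirect irreducibility and $(i)$ is not redundant in Theorem~\ref{characterization}. Your witnesses, however, differ from the paper's. The paper violates (3) with the subalgebra of $(\mathbf{C}_1,\tau_1)\times(\mathbf{C}_1,\tau_2)$ generated by the pairs whose coordinates are simultaneously infinitesimal, and violates (2) and (1) with the subalgebra $\mathbf{B}$ of an ultrapower of $[0,1]_{MV}$ generated by the infinitesimals, equipped first with the ``killing infinitesimals'' operator and then with the identity; there the disjunction property is secured by linearity of $\mathbf{B}$, and $\mathbf{F}_\tau(\mathbf{B})$ fails to be subdirectly irreducible because it is a chain with no minimal nontrivial filter. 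You instead use the finite algebras $\mathbf{S}_2\times\mathbf{S}_2$ with the identity (for (1)) and $\mathbf{2}^3$ with $\tau(x,y,z)=(x,y,y)$ (for (3)), whose verification is immediate, and for (2) you secure the disjunction property by forcing $\tau(A)=\{0,1\}$, so that the failure of subdirect irreducibility of $\mathbf{F}_\tau(\mathbf{A})\cong Rad_1(\mathbf{C}_1)\times\mathbf{C}_1$ comes from a direct product decomposition rather than from a chain without a monolith. All computations check out: $\sigma$, and hence $\tau(x,y)=(\sigma(x),\sigma(x))$, is an idempotent MV-endomorphism, the displayed $F_\tau(A)$ and $\tau(A)$ are correct, and a direct product of two nontrivial hoops is never subdirectly irreducible since the two projection kernels are nontrivial with trivial intersection. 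Incidentally, your own trick also makes case (2) finite (take $\mathbf{2}^3$ with $\tau(x,y,z)=(x,x,x)$), so your route shows the independence can be witnessed entirely by very small algebras, whereas the paper's examples are all infinite; what the paper's choice buys is an illustration of how condition (2) can fail even for a linearly ordered $\mathbf{F}_\tau$.
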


\begin{proof}
(1) Let $\mathbf{C}_1$ be Chang's MV-algebra, let $\tau _{1}$ be the
identity on $\mathbf{C}_1$ and $\tau _{2}$ be the function defined by
$\tau _{2}(x)=0$ if $x$ is an infinitesimal and $\tau _{2}(x)=1$
otherwise. Clearly, both $(\mathbf{C}_1,\tau _{1})$ and
$(\mathbf{C}_1,\tau _{2})$ are SMV-algebras, and so is their direct
product $(\mathbf{B},\tau )=(\mathbf{C}_1,\tau _{1})\times
(\mathbf{C}_1,\tau _{2})$. Let $(\mathbf{D},\tau )$ be the subalgebra
of $(\mathbf{B},\tau )$ generating by all pairs $(x,y)$ such that
$x$ is infinitesimal iff $y$ is infinitesimal. Clearly,
$(\mathbf{D},\tau )$ is not subdirectly irreducible. However, $\tau
(\mathbf{D})$ consists of all pairs $(x,0)$ such that $x$ is
infinitesimal and all pairs $(y,1)$ such that $y$ is not
infinitesimal, and hence it is subdirectly irreducible (the minimum
filter is the set of all $(y,1)$ such that $y$ is not infinitesimal.
Moreover, $ F_\tau(D)$ consists of all elements of the form $(1,y)$
such that $y$ is not infinitesimal, and hence it is subdirectly
irreducible, by the same argument. Clearly (3) does not hold (e.g.,
if $x$ is not infinitesimal and $ x<1$, then $(1,x)\in F_\tau(D)$,
$(x,1)\in \tau (D)$, and $(1,x)\vee (x,1)=(1,1)$, but $(x,1)<(1,1)$
and $(1,x)<(1,1)$).

(2) Let $\mathbf{A}$ be an ultrapower of $[0,1]_{MV}$, and let
$\mathbf{B}$ be the subalgebra of $\mathbf{A}$ generated by all the
infinitesimals. Let $\tau $ be defined by $\tau (x)=0$ if $x$ is an
infinitesimal and $\tau (x)=1$ otherwise. Then $\tau (\mathbf{B})$
is subdirectly irreducible, being the MV-algebra with two elements,
and the disjunction property holds because $\mathbf{B}$ is linearly
ordered, but $\mathbf{F}_\tau(\mathbf{B})$ consists of all
infinitesimals and hence it is not subdirectly irreducible. (If $F$
is any nontrivial $\tau$-filter and $1-\epsilon \in F$, with
$\epsilon$ a positive infinitesimal, then the filter generated by
$1-\epsilon^2$ is a non trivial $\tau$-filter strictly contained in
$F$).

(3) Let $\mathbf{B}$ be as in (2) and let $\tau $ be the identity on
$\mathbf{B}$. Then $ \mathbf{F}_\tau(\mathbf{B})$ is subdirectly
irreducible, being a trivial algebra, and the disjunction property
holds because $B$ is linearly ordered, but $\tau
(\mathbf{B})=\mathbf{B}$ is not subdirectly irreducible.
\end{proof}

Subdirectly irreducible SMMV-algebras also enjoy another
interesting property, namely:

\begin{theorem}\label{representation}
Let $(\mathbf{A},\tau)$ be a subdirectly irreducible SMMV-algebra,
and let $a \in A$. Then there are \emph{uniquely determined} $b \in
\tau(A)$ and $c \in \mathbf{F}_\tau(\mathbf{A})$ such that exactly
one of the following two conditions holds:

\begin{enumerate}
\item[{\rm (a)}] $a=b \odot c$, and $c$ is the greatest element with this
property, or
\item[{\rm (b)}] $a=c \rightarrow b$, and $b < c<1$.
\end{enumerate}
\end{theorem}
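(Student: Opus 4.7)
My plan is to set $b := \tau(a)$ and to split on the order relation between $a$ and $b$. Since $(\mathbf{A},\tau)$ is an SMMV-algebra, $\tau$ is an MV-homomorphism, and from $b = \tau(a) = \tau(b)$ we get $\tau(a\rightarrow b) = b\rightarrow b = 1$ and symmetrically $\tau(b\rightarrow a) = 1$; thus both $a\rightarrow b$ and $b\rightarrow a$ lie in $F_\tau(A)$. By the corollary just preceding the theorem $\mathbf{F}_\tau(\mathbf{A})$ is linearly ordered, and since $(a\rightarrow b)\vee(b\rightarrow a) = 1$ in every MV-algebra, one of these two elements must equal $1$. This yields the clean dichotomy: $a\leq b$ (which I will show corresponds to case~(a)) or $b<a$ (case~(b)). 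Mutual exclusivity is then automatic, since any representation $a = b\odot c$ forces $a\leq b$ while any representation $a = c\rightarrow b$ with $b<c$ yields $a>b$.

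In the first subcase, $a\leq b$, set $c := b\rightarrow a$, which lies in $F_\tau(A)$ by the computation above. The standard identity $b\odot(b\rightarrow a) = b\wedge a$ gives $a = b\odot c$, and MV-residuation ($b\odot d\leq a \iff d\leq b\rightarrow a$) shows $c$ is the greatest element with $b\odot c = a$. Uniqueness of the pair $(b,c)$ comes from applying $\tau$: given any representation $a = b'\odot c'$ with $b'\in\tau(A)$, $c'\in F_\tau(A)$, one has $\tau(a) = b'\odot 1 = b'$, forcing $b' = b$, and then maximality pins $c' = c$.

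In the second subcase, $b<a$, set $c := a\rightarrow b$, again in $F_\tau(A)$. The MV-identity $(x\rightarrow y)\rightarrow y = x\vee y$ applied to $x = a$, $y = b$ gives $c\rightarrow b = a\vee b = a$. The upper bound $c<1$ follows from $a\not\leq b$; for the lower bound $b<c$, note $c = \lnot a\oplus b \geq b$ is automatic, and $c = b$ would place $b$ in $F_\tau(A)\cap\tau(A)$, forcing $b = \tau(b) = 1$ and contradicting $b<a$. Uniqueness of $b$ is obtained by applying $\tau$ to $a = c\rightarrow b$, giving $\tau(a) = 1\rightarrow b = b$; then $c$ is recovered from $(c\rightarrow b)\rightarrow b = c\vee b = c$. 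The main obstacle I anticipate is the dichotomy step itself, which is where the structural hypothesis on $(\mathbf{A},\tau)$ is essentially used (via linearity of $\mathbf{F}_\tau(\mathbf{A})$, a consequence of subdirect irreducibility); once the dichotomy is secured, the rest is routine MV-algebra manipulation combined with the homomorphism property of $\tau$.
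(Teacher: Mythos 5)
Your argument is correct and is essentially the proof given in the paper: the same choice $b=\tau(a)$, the same dichotomy $a\le\tau(a)$ or $\tau(a)\le a$ obtained from linearity of $\mathbf{F}_\tau(\mathbf{A})$ (subdirect irreducibility) applied to $(a\rightarrow\tau(a))\vee(\tau(a)\rightarrow a)=1$, and the same uniqueness argument by applying $\tau$ to a given representation. The only difference is cosmetic: in case (b) you recover $c$ via $(c\rightarrow b)\rightarrow b=c\vee b=c$, where the paper invokes the quasi-equation ``$z\le x$, $z\le y$, $x\rightarrow z=y\rightarrow z$ imply $x=y$''; your version is a slight streamlining of the same step.
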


\begin{proof}
First of all, note that $\tau(a \rightarrow \tau(a))=\tau(\tau(a)
\rightarrow a)=\tau(a) \rightarrow \tau(a)=1$, and hence, for every
$a \in A$, $a \rightarrow \tau(a)$ and $\tau(a) \rightarrow a$
belong to $F_\tau(A)$. We now prove:

\begin{lemma}
If $(\mathbf{A},\tau)$ is a subdirectly irreducible SMMV-algebra,
then for all $a \in A$, either $a \leq \tau(a)$ or $\tau(a) \leq a$.
\end{lemma}

\begin{proof}
Since $(\mathbf{A},\tau)$ is subdirectly irreducible,
$\mathbf{F}_\tau(\mathbf{A})$ is subdirectly irreducible and hence
it is linearly ordered. Hence, $1$ is join irreducible in
$\mathbf{F}_\tau(\mathbf{A})$.  Now $(a \rightarrow \tau(a))\vee
(\tau(a) \rightarrow a)=1$, and hence either $a \rightarrow
\tau(a)=1$ and $a \leq \tau(a)$, or $\tau(a) \rightarrow a =1$ and
$\tau(a) \leq a$.
\end{proof}

Continuing with the proof of Theorem \ref{representation}, let
$b=\tau(a)$ and let $c=b \rightarrow a$ if $a \leq b$, and $c=a
\rightarrow b$ otherwise.

Suppose $a \leq b$. Then $a=a\wedge b=b \odot (b \rightarrow a)=b
\odot c$. Finally, $c$ is the greatest element such that $b \odot
c=a$, by the definition of residuum.

Now suppose $b<a$. Then $c \rightarrow b=(a \rightarrow
b)\rightarrow b=a \vee b=a$. Moreover, $c<1$, as $b<a$. Finally,
$b<c$. Indeed, $b \leq a \rightarrow b =c$, and it cannot be $c=b$,
as $\tau(c)=1$ and $\tau(b)=b<a$.\smallskip

We now discuss uniqueness. If $a=b \odot c$, with $b \in \tau(A)$
and $c \in F_\tau(A)$, then $\tau(a)=\tau(b) \odot \tau(c)=b \odot
1=b$. Thus $b=\tau(a)$ is uniquely determined. Moreover, $a \leq b$,
$b \odot c=a$ and $c$ is the greatest element with this property.
Hence, $c=a \rightarrow b.$

If $a=c \rightarrow b$ with $c<1$, then $b<a$. Moreover,
$\tau(a)=\tau(c) \rightarrow \tau(b)=1 \rightarrow b=b$, and $b$ is
uniquely determined. Finally, in any MV-algebra, if $z\leq x$,
$z\leq y$ and $x \rightarrow z=y \rightarrow z$, then $x=y$ (this
property is expressed as a quasi equation and holds in $[0,1]_{MV}$,
and hence it holds in any MV-algebra). Now $b<c<1$, $b \leq (a
\rightarrow b)\rightarrow b$, and $c \rightarrow b=(a \rightarrow b)
\rightarrow b$. It follows that $c=a \rightarrow b$, and uniqueness
of $c$ is proved.
\end{proof}

For all $b \in \tau(A)$, the set $M(b)=\{x \in A:\exists \ c,d \in
F_\tau(A),\ c\odot b\leq x\leq d \rightarrow b\}$ is called
\emph{the monad of $b$}. Now by Theorem \ref{representation}, if
$(\mathbf{A},\tau)$ is subdirectly irreducible, then for all $b \in
\tau(A)$, $M(b)$ is linearly ordered and $\tau(\mathbf{A})$ is
linearly ordered. Thus, although $\mathbf{A}$ need not be linearly
ordered, it is close to be such. More precisely, let $M=\{\pm c:c
\in F_\tau(A),\,c<1\}$. We define a poset $\mathbf{M}$ on $M$
letting $-c<-d$ iff $d<c$, and $c\leq 1<-d$ for all $c,d \in
F_\tau(A)\setminus\{1\}$. Then after identifying $c \odot b$ with
$(b,c)$ and $c \rightarrow b$ with $(b,-c)$, we have that $M_b$ is a
subposet of $\mathbf{M} \times \{b\}$, and  $A$  may be identified
with a subset of $\tau(A)\times M$. Moreover, if $b \leq b'$ in
$\tau(A)$ and $\pm c \leq \pm c'$ in $\mathbf{M}$, then $(b,c) \leq
(b',c')$ in $\mathbf{A}$. Hence, the order on $\mathbf{A}$ is an
extension of a subposet of the product order on
$\tau(\mathbf{A})\times \mathbf{M}$, that is, $\mathbf{A}$ as a
poset is isomorphic to a quotient of a subposet of the product of
two chains. This suggests that either $\mathbf{A}$ is a chain or a
subalgebra of a product of two chains. This conjecture will be
proved in Section 6. More precisely:

\begin{definition}{\rm
An SMMV-algebra $(\mathbf{A},\tau)$ is said to be \emph{diagonal} if
there are MV-chains $\mathbf{B}$ and $\mathbf{C}$ such that
$\mathbf{B}\subseteq \mathbf{C}$, $\mathbf{A}=\mathbf{B}\times
\mathbf{C}$ and $\tau$ is defined, for all $b\in B$ and $c \in C$,
by $\tau(b,c)=(b,b)$.

An SMMV-algebra is said to be \emph{subdiagonal} if it is a
subalgebra of a diagonal SMMV-algebra.}
\end{definition}

In Section 6 we will prove:

\begin{theorem}\label{subdiagonal}
Every subdirectly irreducible SMMV-algebra is subdiagonal.
\end{theorem}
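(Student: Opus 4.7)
The plan is to embed $(\mathbf{A},\tau)$ into a diagonal SMMV-algebra $(\mathbf{B}\times \mathbf{C},\tau')$ with $\mathbf{B}:=\tau(\mathbf{A})$ and a suitable MV-chain $\mathbf{C}\supseteq \mathbf{B}$; note that $\mathbf{B}$ is already a chain by the Corollary above. The embedding will have the form $\varphi(a)=(\tau(a),\pi(a))$ for an MV-homomorphism $\pi\colon \mathbf{A}\to\mathbf{C}$ whose restriction to $\mathbf{B}$ is the inclusion. With this setup the identity $\tau'\circ \varphi=\varphi\circ \tau$ (where $\tau'(b,c)=(b,b)$) holds automatically, and the MV-algebra-homomorphism properties of $\varphi$ are inherited from $\tau$ and $\pi$; so all that remains is to arrange that $\varphi$ be injective.

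Injectivity of $\varphi$ amounts to $\pi$ separating points within each monad $M(b)=\tau^{-1}(b)$: if $\tau(a)=\tau(a')=b$ and $a<a'$, then $a'\to a\in F_\tau(A)\setminus\{1\}$ (since $\tau(a'\to a)=1$ while $a'\not\leq a$), and separating $a$ from $a'$ in the chain $\mathbf{C}$ is equivalent to $a'\to a\notin \ker \pi$. Likewise, $\pi$ embeds $\mathbf{B}$ exactly when $\ker \pi$ avoids $\tau(A)\setminus\{1\}$. Writing $S:=(\tau(A)\cup F_\tau(A))\setminus \{1\}$ and $\Omega:=\ker\pi$, the problem thus reduces to finding a \emph{prime} MV-filter $\Omega$ of $\mathbf{A}$ with $\Omega\cap S=\emptyset$; then $\mathbf{C}:=\mathbf{A}/\Omega$ is a chain and $\pi:=\pi_\Omega$ has all the required properties.

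The key step, which I expect to be the main obstacle, is showing that $S$ is closed under the lattice join $\vee$. If $s_1,s_2\in S$ both lie in $\tau(A)$ (resp., both lie in $F_\tau(A)$), this is immediate from linearity of $\tau(\mathbf{A})$ (resp., of $\mathbf{F}_\tau(\mathbf{A})$). In the mixed case $s_1\in \tau(A)\setminus\{1\}$, $s_2\in F_\tau(A)\setminus \{1\}$, the disjunction property (Lemma~\ref{necessary}(3)) forces $s_1\vee s_2<1$, while $\tau(s_1\vee s_2)=\tau(s_1)\vee \tau(s_2)=s_1\vee 1=1$ (using that $\tau$ preserves $\vee$ in any SMMV-algebra) places $s_1\vee s_2$ in $F_\tau(A)\setminus\{1\}\subseteq S$. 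Once $S$ is $\vee$-closed, Zorn's Lemma produces a filter $\Omega^*$ maximal among those disjoint from $S$; primality of $\Omega^*$ then follows from the standard argument: if $x,y\notin \Omega^*$ and $x\vee y\in \Omega^*$, then by maximality there are $s_i\in \langle \Omega^*,x\rangle\cap S$ and $s_j\in \langle \Omega^*,y\rangle\cap S$, and $s_i\vee s_j$ lies in $S$ (by $\vee$-closure) and in $\langle \Omega^*,x\rangle\cap \langle \Omega^*,y\rangle=\Omega^*$ (using the MV-algebra inequality $(x\vee y)^{2n}\le x^n\vee y^n$ together with closure of filters under $\odot$), a contradiction.

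With such $\Omega^*$ at hand, the map $\varphi(a)=(\tau(a),\pi_{\Omega^*}(a))$ realizes $(\mathbf{A},\tau)$ as a subalgebra of the diagonal SMMV-algebra $(\tau(\mathbf{A})\times \mathbf{A}/\Omega^*,\tau')$, proving that $(\mathbf{A},\tau)$ is subdiagonal. All three conditions of Lemma~\ref{necessary} enter decisively in establishing the $\vee$-closure of $S$, which is the heart of the argument.
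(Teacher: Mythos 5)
Your proposal is correct, but it takes a genuinely different route from the paper. The paper proves the theorem externally: it first shows that $D([0,1]_{MV})$ generates the variety of SMMV-algebras, establishes the Congruence Extension Property and closure of subdiagonal algebras under $\mathsf{S}$ and $\mathsf{P}_{\mathsf{U}}$, and then, via J\'onsson's lemma, writes a subdirectly irreducible $(\mathbf{A},\tau)$ as a homomorphic image of a subdiagonal algebra; the quotient is analysed through ``skew diagonal'' algebras $(\mathbf{B}\times\mathbf{C}/\varphi,\tau)$, and only at the very end does the disjunction property enter, to show that a subdirectly irreducible subalgebra of a skew diagonal algebra is subdiagonal. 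Your argument is instead a direct internal construction: from linearity of $\tau(\mathbf{A})$ and $\mathbf{F}_\tau(\mathbf{A})$, the disjunction property, and the fact that $\tau$ preserves $\vee$, you get that $S=(\tau(A)\cup F_\tau(A))\setminus\{1\}$ is $\vee$-closed, take a filter $\Omega^*$ maximal disjoint from $S$ (prime by the standard $(x\vee y)^n$-argument), and embed $(\mathbf{A},\tau)$ into the diagonal algebra built from $\mathbf{B}=\tau(\mathbf{A})$ and $\mathbf{C}=\mathbf{A}/\Omega^*$ via $a\mapsto(\tau(a),\pi(a))$. This is more elementary and self-contained (no J\'onsson's lemma, ultraproducts, CEP, or the generation theorem), and it exhibits the diagonal extension concretely, with $\mathbf{C}$ a quotient of $\mathbf{A}$ itself; the paper's longer route pays for itself by producing results of independent interest (CEP, closure facts, the skew-diagonal analysis and the example on failure of closure under $\mathsf{H}$). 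Two small points you should tidy up: injectivity must also handle distinct $a,a'$ in the same fiber without assuming comparability --- either note that $(a\to a')\vee(a'\to a)=1$ together with linearity of $\mathbf{F}_\tau(\mathbf{A})$ makes each fiber a chain, or argue directly that $a\leftrightarrow a'\in F_\tau(A)\setminus\{1\}\subseteq S$ since $\tau$ preserves $\wedge$; and to match the literal definition of a diagonal algebra you should replace $\tau(\mathbf{A})$ by its isomorphic copy $\pi(\tau(\mathbf{A}))\subseteq\mathbf{C}$ (using that $\pi$ is injective on $\tau(A)$ because $\Omega^*$ misses $\tau(A)\setminus\{1\}$), which is purely cosmetic.
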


\section{A classification of subdirectly irreducible SMMV-algebras}

We present a classification of SMMV-algebras introducing four types
of subdirectly irreducible SMMV-algebras, type $\mathcal{I},$
identity, type $\mathcal L,$ local, type $\mathcal D,$
diagonalization, and type $\mathcal K,$ killing infinitesimals.

The following theorem was proved in \cite{DiDv, DDL2, Dvu}.

\begin{theorem}\label{th:3}
Let $(\mathbf A,\tau)$ be a subdirectly irreducible SMMV-algebra.
Then $(\mathbf A,\tau)$ belongs to exactly one of the following
classes:

\begin{itemize}
\item[{\rm (i)}]
$\mathbf A$ is linearly ordered, $\tau $ is the identity on $A$ and
the MV-reduct of $\mathbf A$ is a subdirectly irreducible
MV-algebra.

\item[{\rm (ii)}]
The  state morphism operator $\tau$ is not faithful, $\mathbf A$ has
no nontrivial Boolean elements and is a local MV-algebra. Moreover,
$\mathbf A$ is linearly ordered if and only if $Rad_1(\mathbf A)$ is
linearly ordered, and in such a case, $\mathbf A$ is a subdirectly
irreducible MV-algebra such that the smallest nontrivial
$\tau$-filter of $(\mathbf A,\tau),$ and the smallest nontrivial
MV-filter for $\mathbf A$ coincide.

\item[{\rm (iii)}]
The state morphism operator $\tau$ is not  faithful, $\mathbf A$ has
a nontrivial  Boolean element. There are a  linearly ordered
MV-algebra $\mathbf B,$ a subdirectly irreducible  MV-algebra
$\mathbf C,$ and an injective MV-homomorphism $h:\, \mathbf B \to
\mathbf C$ such that $(\mathbf A,\tau)$ is isomorphic to $(\mathbf
B\times \mathbf C,\tau_h),$ where $\tau_h(x,y) =(x,h(x))$ for any
$(x,y) \in B\times C.$
\end{itemize}
\end{theorem}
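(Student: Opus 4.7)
My plan is to split cases on whether $\tau$ is faithful, and within the non-faithful case, split further on the presence of a nontrivial Boolean element in $\mathbf{A}$. First, if $\tau$ is faithful then $F_\tau(A)=\{1\}$. For any $a\in A$ one has $\tau(a\rightarrow\tau(a))=\tau(a)\rightarrow\tau(a)=1$ and symmetrically $\tau(\tau(a)\rightarrow a)=1$, using that $\tau$ is an idempotent MV-endomorphism; hence $a\rightarrow\tau(a)$ and $\tau(a)\rightarrow a$ both lie in $F_\tau(A)$, and faithfulness makes both equal $1$, so $\tau(a)=a$. Thus $\tau$ is the identity, $\tau$-filters coincide with MV-filters, and subdirect irreducibility of $(\mathbf{A},\tau)$ reduces to that of $\mathbf{A}$ as an MV-algebra, which forces $\mathbf{A}$ to be linearly ordered. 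This yields (i).

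Now assume $\tau$ is not faithful, so $F_\tau(A)\neq\{1\}$. By Lemma~\ref{necessary}(2) and its corollary, both $\mathbf{F}_\tau(\mathbf{A})$ and $\tau(\mathbf{A})$ are chains. Suppose further that $\mathbf{A}$ has no nontrivial Boolean element. I would prove locality by assuming two distinct maximal MV-ideals $M_1,M_2$ and, using the representation of Theorem~\ref{representation} (every element is either $b\odot c$ or $c\rightarrow b$ with $b\in\tau(A)$, $c\in F_\tau(A)$) together with the disjunction property from Lemma~\ref{necessary}(3), locate an idempotent separating $M_1$ and $M_2$, contradicting the hypothesis. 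For the linearly-ordered sub-case, once $\mathrm{Rad}_1(\mathbf{A})$ is a chain the monads $M(b)$ discussed after Theorem~\ref{representation} are chains themselves, and the chain $\tau(\mathbf{A})$ organizes them into a single chain on $\mathbf{A}$. To identify the minimum $\tau$-filter with the minimum MV-filter I would use that the smallest $\tau$-filter is contained in $F_\tau(A)$ (as in the second half of the proof of Theorem~\ref{characterization}) and that, under locality together with linearity, every nontrivial MV-filter meets $F_\tau(A)$ nontrivially, hence contains the minimum $\tau$-filter.

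Finally assume $\tau$ is not faithful and that $e\in A$ is a nontrivial Boolean element. Then the standard Boolean decomposition gives $\mathbf{A}\cong[0,e]\times[0,\neg e]$ as MV-algebras. Since $\tau(\mathbf{A})$ is a chain it cannot project nontrivially onto both factors, so after choosing $\mathbf{B}=[0,e]$ to be the factor on which $\tau$ acts as the identity on the first coordinate, the operator takes the form $\tau(x,y)=(x,h(x))$ for a well-defined MV-homomorphism $h\colon\mathbf{B}\rightarrow\mathbf{C}=[0,\neg e]$. Linearity of $\mathbf{B}$ follows from that of $\tau(\mathbf{A})\cong\mathbf{B}$. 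Subdirect irreducibility of $(\mathbf{A},\tau)$ then forces $\mathbf{C}$ to be subdirectly irreducible (hence linearly ordered) and $h$ to be injective, since a nontrivial kernel of $h$ or a proper SMV-congruence on $\mathbf{C}$ would pull back to a nontrivial $\tau$-filter of $\mathbf{A}$ that avoids the minimum one. The three cases are pairwise exclusive by construction: (i) has $\tau$ faithful while (ii) and (iii) do not, and (ii), (iii) are separated by the hypothesis on Boolean elements.

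The main obstacle I expect is Case~(ii). It is not generally true that an MV-algebra without nontrivial Boolean elements is local (for instance, $C([0,1],[0,1]_{MV})$ is semisimple and has no nontrivial Boolean elements, yet is very far from local), so the argument must genuinely exploit the structure forced by subdirect irreducibility in the SMMV signature rather than just the MV-reduct. Establishing that the minimum $\tau$-filter coincides with the minimum MV-filter in the linearly-ordered subcase is a second delicate point. Case~(iii), in contrast, is mostly a matter of bookkeeping once the Boolean decomposition is in place and the homomorphism $h$ has been extracted.
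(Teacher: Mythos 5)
The paper does not actually prove Theorem \ref{th:3}: it is imported from \cite{DiDv,DDL2,Dvu}, so your argument has to stand entirely on its own. Your case (i) is correct and complete. The genuine gap is case (ii): the substance of that case is the proof that $\mathbf A$ is local, and you do not give one. As you yourself note, ``no nontrivial Boolean elements'' does not imply locality, and your plan to ``locate an idempotent separating $M_1$ and $M_2$'' is precisely the missing idea --- two distinct maximal ideals do not produce a nontrivial idempotent in general, and you never show how subdirect irreducibility (via Theorem \ref{representation} or the disjunction property of Lemma \ref{necessary}) yields the required contradiction. The remaining assertions of (ii) --- that $\mathbf A$ is linearly ordered iff $Rad_1(\mathbf A)$ is, and that in that case the least nontrivial $\tau$-filter and the least nontrivial MV-filter coincide --- are likewise only gestured at (``the chain $\tau(\mathbf A)$ organizes the monads into a single chain'', ``every nontrivial MV-filter meets $F_\tau(A)$ nontrivially''); neither claim is argued, and the second is not obvious even granting locality and linearity. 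So the hardest part of the theorem remains unproven in your proposal.

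Case (iii) also has a concrete defect. The sentence ``since $\tau(\mathbf A)$ is a chain it cannot project nontrivially onto both factors'' is false: in the conclusion of the theorem itself, $\tau(A)=\{(x,h(x)):x\in B\}$ projects onto $\mathbf B$ and injectively into $\mathbf C$. What linearity of $\tau(\mathbf A)$ really gives is that $\tau(e)$ is a Boolean element of a chain, hence $\tau(e)\in\{0,1\}$; writing $A=B\times C$ with $e=(1,0)$ and assuming (after possibly swapping factors) $\tau(e)=1$, one gets $\tau(x,y)=\tau((x,y)\wedge e)\vee\tau((x,y)\wedge\neg e)\wedge\tau(e)=\tau(x,0)$, so $\tau(x,y)=(\tau_1(x),\tau_2(x))$ with $\tau_1$ an idempotent endomorphism of $\mathbf B$. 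The decisive point, $\tau_1=\mathrm{Id}_B$, is exactly what you smuggle in by ``choosing $\mathbf B$ to be the factor on which $\tau$ acts as the identity on the first coordinate''; it must be derived from subdirect irreducibility. It can be: if $\tau_1(x_0)=\tau_2(x_0)=1$ for some $x_0<1$, then the MV-filter generated by $(x_0,1)$ and the $\tau$-filter $\{1\}\times C$ are nontrivial $\tau$-filters meeting only in $(1,1)$, contradicting subdirect irreducibility; if $\tau_1(x_0)=1>\tau_2(x_0)$, then $a=(x_0,1)$ and $\tau(a)=(1,\tau_2(x_0))$ are incomparable, contradicting the comparability lemma inside the proof of Theorem \ref{representation}. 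Hence $\{x:\tau_1(x)=1\}=\{1\}$ and the idempotent $\tau_1$ is the identity. With that step supplied, your remaining arguments (linearity of $\mathbf B$, subdirect irreducibility of $\mathbf C$, injectivity of $h$, and the mutual exclusivity of the three types) do go through, but as written both (ii) and the key step of (iii) are missing.
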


Note that while every SMMV-algebra  satisfying (i) or (iii) is
subdirectly irreducible, the same is not true of SMMV-algebras
satisfying (ii). A full classification of subdirectly irreducible
SMMV-algebras is obtained by combining Theorem \ref{th:3}, Theorem
\ref{subdiagonal}, and Theorem \ref{characterization}.\medskip

Let us consider the following classes of SMMV-algebras:

\begin{definition}{\rm
\emph{Type $\mathcal{I}$ (identity).} The MV-reduct, $\mathbf{A}$,
of $(\mathbf{A},\tau)$ is a subdirectly irreducible MV-algebra and
$\tau$ is the identity function on $A$.\medskip

\emph{Type $\mathcal{L}$ (local).} $(\mathbf{A},\tau)$ is
subdiagonal, the MV-reduct, $\mathbf{A}$, of $(\mathbf{A},\tau)$ is
a local MV-algebra (hence it has no Boolean nontrivial elements),
$\mathbf{F}_\tau(\mathbf{A})$ is a nontrivial subdirectly
irreducible hoop, $\mathbf{F}_\tau(\mathbf{A})$ and
$\tau(\mathbf{A})$ have the disjunction property.\medskip

\emph{Type $\mathcal{D}$ (diagonalization).} The MV-reduct,
$\mathbf{A}$, of $(\mathbf{A},\tau)$ is of the form
$\mathbf{B}\times\mathbf{C}$, where $\mathbf{C}$ is a subdirectly
irreducible MV-algebra and $\mathbf{B}$ is a subalgebra of
$\mathbf{C}$. Moreover, $\tau$  is defined by $\tau(b,c) = (b,b)$.}
\end{definition}

\begin{theorem}\label{classif}
An SMMV-algebra is subdirectly irreducible if and only if it is of
one of the types $\mathcal{I}$, $\mathcal{L}$ and $\mathcal{D}$.
Moreover, these types are mutually disjoint.
\end{theorem}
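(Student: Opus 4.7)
The plan is to orchestrate three already-established results—Theorem~\ref{characterization}, Theorem~\ref{subdiagonal}, and the trichotomy Theorem~\ref{th:3}—and check that the three types correspond precisely to the three cases (i), (ii), (iii) of Theorem~\ref{th:3} and are pairwise disjoint. I would split the argument into three pieces: (a) subdirectly irreducible $\Rightarrow$ one of the three types, via Theorem~\ref{th:3}; (b) each type $\Rightarrow$ subdirectly irreducible, via Theorem~\ref{characterization}; and (c) pairwise disjointness via simple structural invariants.

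For (a), I would apply Theorem~\ref{th:3}. Case~(i) is literally type~$\mathcal{I}$. In case~(ii), the failure of faithfulness of $\tau$ yields some $a\ne 1$ with $\tau(a)=1$, so $\mathbf{F}_\tau(\mathbf{A})$ is nontrivial; Lemma~\ref{necessary}(2) then makes it a subdirectly irreducible Wajsberg hoop, Lemma~\ref{necessary}(3) supplies the disjunction property with $\tau(\mathbf{A})$, Theorem~\ref{th:3}(ii) gives local, and Theorem~\ref{subdiagonal} gives subdiagonality—exactly the requirements of type~$\mathcal{L}$. In case~(iii), the isomorphism $(\mathbf{A},\tau)\cong(\mathbf{B}\times\mathbf{C},\tau_h)$ can be transported along the embedding $h\colon\mathbf{B}\hookrightarrow\mathbf{C}$ by sending $(b,c)\mapsto(h(b),c)$; this produces an isomorphic copy $(h(\mathbf{B})\times\mathbf{C},\tau)$ with $\tau(b',c)=(b',b')$ and $h(\mathbf{B})$ a subalgebra of $\mathbf{C}$—exactly type~$\mathcal{D}$.

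For (b), I would verify the hypotheses of Theorem~\ref{characterization} in each type. Type~$\mathcal{I}$ is immediate: when $\tau$ is the identity, $\tau$-filters coincide with MV-filters, so SMV-subdirect irreducibility reduces to the assumed MV-subdirect irreducibility. For type~$\mathcal{L}$, condition~(1) of Lemma~\ref{necessary} is vacuous because $\mathbf{F}_\tau(\mathbf{A})$ is nontrivial, while (2) and (3) are built into the definition. For type~$\mathcal{D}$, a short calculation shows $F_\tau(A)=\{1\}\times C\cong \mathbf{C}$ (subdirectly irreducible as a Wajsberg hoop, since $\mathbf{C}$ is a subdirectly irreducible MV-chain), $\tau(\mathbf{A})\cong \mathbf{B}$, and the disjunction property follows from linearity of $\mathbf{C}$; alternatively one can just quote the observation in Theorem~\ref{th:3} that every algebra of form~(iii) is subdirectly irreducible.

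The step I expect to require the most care is (c), pairwise disjointness. Type~$\mathcal{I}$ has $\tau=\mathrm{id}$, which forces $\mathbf{F}_\tau(\mathbf{A})=\{1\}$—contradicting the nontriviality demanded in~$\mathcal{L}$—and its MV-reduct, being a subdirectly irreducible MV-algebra, is a chain and hence has no nontrivial idempotent, whereas type~$\mathcal{D}$ contains the nontrivial Boolean element $(1,0)$. Type~$\mathcal{L}$ has a local MV-reduct, so again is Boolean-trivial, separating it from~$\mathcal{D}$. The one subtle point is checking that type~$\mathcal{D}$ genuinely always contains a nontrivial Boolean element, i.e.\ that $\mathbf{B}$ must be taken nontrivial in the definition; otherwise $\tau(b,c)=(b,b)$ collapses onto a single chain and the algebra reappears as type~$\mathcal{I}$. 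I would handle this either by building $|\mathbf{B}|\ge 2$ into the definition of type~$\mathcal{D}$ or by noting explicitly that the degenerate case is absorbed into~$\mathcal{I}$ and causes no genuine overlap.
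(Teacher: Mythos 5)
Your proposal is correct and follows essentially the same route as the paper: sufficiency of each type via Theorem~\ref{characterization} (with the same computation of $F_\tau(A)=\{1\}\times C$ and the disjunction property for type $\mathcal{D}$), necessity via the trichotomy of Theorem~\ref{th:3} together with Lemma~\ref{necessary}/Theorem~\ref{characterization} and Theorem~\ref{subdiagonal} for case (ii), and disjointness by structural invariants (the paper uses triviality of $F_\tau(\mathbf{A})$ and the two maximal filters of a diagonal reduct, you use Boolean elements and linearity, which is equivalent). Your worry about a degenerate $\mathbf{B}$ in type $\mathcal{D}$ is moot: $\mathbf{B}$ is a subalgebra of the subdirectly irreducible (hence nontrivial) $\mathbf{C}$, so $0\neq 1$ in $\mathbf{B}$ and $(1,0)$ is always a nontrivial Boolean element.
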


\begin{proof}
We first prove, using Theorem~\ref{characterization}, that all
members of $\mathcal{I}\cup\mathcal{L}\cup\mathcal{D}$ are
subdirectly irreducible. For type $\mathcal{I}$, the claim is easy
and for type $\mathcal{L}$ the claim follows from the definition of
type $\mathcal{L}$ and from Theorem \ref{characterization}. For type
$\mathcal{D}$, if $(\mathbf{A},\tau)$ is diagonal, say,
$\mathbf{A}=\mathbf{B}\times \mathbf{C}$ with $\mathbf{B}\subseteq
\mathbf{C}$, $\mathbf{C}$ is subdirectly irreducible and $\tau$ is
diagonal, we have that $\mathbf{F}_\tau(\mathbf{A})$ consists of all
pairs $(1,c)$ with $c\in C$, and hence it is isomorphic (as a
Wajsberg hoop) to $\mathbf{C}$. Since $\mathbf{C}$ is subdirectly
irreducible, so is $\mathbf{F}_\tau(\mathbf{A})$. Finally,
$\tau(\mathbf{A})$ consists of all pairs of the form $(b,b)$ with $b
\in B$. Now if $(b,b)\vee (1,c)=(1,1)$, then either $(b,b)=(1,1)$ or
$(1,c)=(1,1)$. Hence, $\tau(\mathbf{A})$ and
$\mathbf{F}_\tau(\mathbf{A})$ have the disjunction property, and by
Theorem \ref{characterization}, $(\mathbf{A},\tau)$ is subdirectly
irreducible.

For the converse, we use Theorem \ref{th:3}. It is clear that
condition (i) in Theorem \ref{th:3} corresponds to type
$\mathcal{I}$. For case (ii) the additional conditions that $\mathbf
F_\tau(\mathbf A)$ is subdirectly irreducible and $\mathbf
F_\tau(\mathbf A)$ and $\tau(\mathbf A)$ have the disjunction
property follows from Theorem \ref{characterization} and the
additional condition that $(\mathbf{A},\tau)$ is subdiagonal follows
from Theorem \ref{subdiagonal}.

Now, suppose (iii) is the case. Identifying $\mathbf{B}$ with its
isomorphic copy $h(\mathbf{B})$, we can rephrase the definition of
$\tau$ as $\tau(b,c) = (b,b)$, and hence $(\mathbf{A},\tau)$ is of
type $\mathcal{D}$.

Finally, types $\mathcal{I}$, $\mathcal{L}$ and $\mathcal{D}$ are
mutually disjoint, because if $(\mathbf{A},\tau)$ is of type
$\mathcal{I}$, then $\mathbf{F}_\tau(\mathbf{A})$ is trivial, while
if $(\mathbf{A},\tau)$ is of type $\mathcal{L}$ or $\mathcal{D}$,
then $\mathbf{F}_\tau(\mathbf{A})$ is non-trivial. Moreover, the
MV-reduct of a diagonal SMMV-algebra has two maximal filters, and
hence it cannot be a local MV-algebra. This finishes the proof.
\end{proof}

There is yet another type of subdirectly irreducible SMMV-algebras,
namely, type $\mathcal{K}$ (killing infinitesimals), which is
described as follows:

\begin{definition}
An SMMV-algebra $(\mathbf{A},\tau)$ is said to be of \emph{type}
$\mathcal{K}$ if $\mathbf{A}$ is of type $\mathcal{L}$ and is
linearly ordered.
\end{definition}

The next example shows that the class of SMMV-algebras of type
$\mathcal{K}$ is properly contained in the class of SMMV-algebras of
type $\mathcal{L}$.\medskip

\begin{example}\label{ex:1}
Let $\mathbf{C}_1$ be the Chang MV-algebra. Let $\mathbf{A}$ be
the subalgebra of $\mathbf{C}_1\times \mathbf{C}_1$ generated by
$Rad(\mathbf{C}_1)\times Rad(\mathbf{C}_1),$ i.e.,
$A = (Rad(\mathbf{C}_1)\times Rad(\mathbf{C}_1))\cup
(Rad_1(\mathbf{C}_1)\times Rad_1(\mathbf{C}_1))$.
 We define $\tau: A \to A$ via $\tau(x,y)=(x,x)$.  Then $\tau$
is a state morphism operator on $\mathbf A$ such that $(\mathbf
A,\tau)$ is a subdirectly irreducible SMMV-algebra,
$\mathbf{F}_\tau(\mathbf{A}) = \{1 \}\times Rad_1(\mathbf{C}_1)$,
$\tau$ is not faithful, $\mathbf A$ has no nontrivial Boolean
elements, but it is not linearly ordered. We note that
$Rad_1(\mathbf A)=Rad_1(\mathbf{C}_1)\times Rad_1(\mathbf{C}_1)$ is the
unique maximal filter.
\end{example}

\section{Varieties of SMMV-algebras and their generators}

We describe the varieties of SMMV-algebras and their generators. In
particular, we answer in positive to an open question from
\cite{DiDv} that the diagonalization of the real interval $[0,1]$
generates the variety of SMMV-algebras.

Given a variety $\mathcal{V}$ of MV-algebras, $\mathcal{V}_{SMMV}$
will denote the class of SMMV-algebras whose MV-reduct is in
$\mathcal{V}$. Clearly, $\mathcal{V}_{SMMV}$ is a variety.

\begin{definition}
For every MV-algebra $\mathbf{A}$ we set
$D(\mathbf{A})=(\mathbf{A}\times \mathbf{A},\tau _{A})$, where
$\tau_A$ is defined, for all $a,b \in A$, by $\tau_A(a,b)=(a,a)$.
For every class $\mathcal{K}$ of MV-algebras, we set
$\mathsf{D}(\mathcal{K})=\left\{ D(\mathbf{A}):\mathbf{A}\in
\mathcal{K} \right\} $.

As usual, given a class $\mathcal{K}$ of algebras of the same type,
$\mathsf{ I}(\mathcal{K})$, $\mathsf{H}(\mathcal{K})$,
$\mathsf{S}(\mathcal{K})$ and $ \mathsf{P}(\mathcal{K})$ and
$\mathsf P _{\mathsf U}(\mathcal K)$ will denote the class of isomorphic
images, of homomorphic images, of subalgebras, of direct products
and of ultraproducts of algebras from $ \mathcal{K}$, respectively.
Moreover, $\mathsf{V}(\mathcal{K})$ will denote the variety
generated by $\mathcal{K}$.
\end{definition}

\begin{lemma}\label{wd}
{\rm (1)} Let $\mathcal{K}$ be a class of MV-algebras. Then
$\mathsf{ VD}(\mathcal{K})\subseteq
\mathsf{V}(\mathcal{K})_{SMMV}$.\newline {\rm (2)} Let $\mathcal{V}$
be any variety of MV-algebras. Then $\mathcal{V}
_{SMMV}=\mathsf{ISD}(\mathcal{V})$.
\end{lemma}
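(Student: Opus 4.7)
The plan for part (1) is essentially closure under generators. By construction $\mathsf{V}(\mathcal{K})_{SMMV}$ is a variety of SMMV-algebras, so to conclude $\mathsf{VD}(\mathcal{K})\subseteq\mathsf{V}(\mathcal{K})_{SMMV}$ it suffices to show that each generator $D(\mathbf{A})$ with $\mathbf{A}\in\mathcal{K}$ lies in $\mathsf{V}(\mathcal{K})_{SMMV}$. But the MV-reduct of $D(\mathbf{A})$ is $\mathbf{A}\times\mathbf{A}$, which lies in $\mathsf{V}(\mathcal{K})$ since varieties are closed under direct products, and $\tau_A\colon(a,b)\mapsto(a,a)$ is an idempotent MV-endomorphism of $\mathbf{A}\times\mathbf{A}$, hence a state morphism by clause (2b) of the preceding lemma. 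The inclusion follows.

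For part (2), the $\supseteq$ direction $\mathsf{ISD}(\mathcal{V})\subseteq\mathcal{V}_{SMMV}$ drops out of (1) applied with $\mathcal{K}=\mathcal{V}$ (so that $\mathsf{V}(\mathcal{V})=\mathcal{V}$), combined with the fact that $\mathcal{V}_{SMMV}$ is a variety and hence closed under $\mathsf{I}$ and $\mathsf{S}$.

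The converse $\mathcal{V}_{SMMV}\subseteq\mathsf{ISD}(\mathcal{V})$ is the only direction requiring genuine work. Given $(\mathbf{A},\tau)\in\mathcal{V}_{SMMV}$, my plan is to use the \emph{graph embedding} $\phi\colon A\to A\times A$ defined by $\phi(a)=(\tau(a),a)$, and to verify that $\phi$ is an injective SMMV-homomorphism from $(\mathbf{A},\tau)$ into $D(\mathbf{A})$. The MV-homomorphism property is componentwise and follows because $\tau$ is itself an MV-endomorphism (by (2b) of the preceding lemma) and the identity trivially is. Injectivity is immediate from the second coordinate. For the intertwining condition with the state morphism operator one computes $\tau_A(\phi(a))=\tau_A(\tau(a),a)=(\tau(a),\tau(a))$, while $\phi(\tau(a))=(\tau(\tau(a)),\tau(a))=(\tau(a),\tau(a))$ by idempotence, so the two agree. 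Since $\mathbf{A}\in\mathcal{V}$ gives $D(\mathbf{A})\in\mathsf{D}(\mathcal{V})$, this proves $(\mathbf{A},\tau)\in\mathsf{ISD}(\mathcal{V})$.

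I do not foresee any real obstacle: both halves reduce to routine verifications once the graph map $a\mapsto(\tau(a),a)$ is identified as the right embedding. The only conceptual insight needed is recognising that the diagonalisation $D(\mathbf{A})$ is large enough to absorb any SMMV-structure on $\mathbf{A}$ via its graph, which in turn is a direct manifestation of the fact that state morphisms are precisely idempotent MV-endomorphisms.
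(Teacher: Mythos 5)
Your proposal is correct and follows essentially the same route as the paper: for part (2) you use exactly the paper's graph embedding $a\mapsto(\tau(a),a)$ of $(\mathbf{A},\tau)$ into $D(\mathbf{A})$, with the same verification of injectivity, the MV-homomorphism property, and compatibility with $\tau$ via idempotence. The only (harmless) difference is in part (1), where the paper tracks MV-reducts explicitly through $\mathsf{H}$, $\mathsf{S}$, $\mathsf{P}$, while you shortcut this by noting $\mathsf{D}(\mathcal{K})\subseteq\mathsf{V}(\mathcal{K})_{SMMV}$ and invoking that $\mathsf{V}(\mathcal{K})_{SMMV}$ is a variety (a fact the paper states just before the lemma), which is a legitimate, slightly tighter packaging of the same idea.
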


\begin{proof}
(1) We have to prove that every MV-reduct of an algebra in $\mathsf{%
VD}(\mathcal{K})$ is in $\mathsf{V}(\mathcal{K})$. Let
$\mathcal{K}_{0}$ be the class of all MV-reducts of algebras in
$\mathsf{D}(\mathcal{K})$. Then since the MV-reduct of
$D(\mathbf{A})$ is $\mathbf{A}\times \mathbf{A}$, and since
$\mathbf{A}$ is a homomorphic image (under the projection map) of
$\mathbf{A}\times \mathbf{A}$, $\mathcal{K}_{0}\subseteq
\mathsf{P}(\mathcal{K})$ and $\mathcal{K}\subseteq
\mathsf{H}(\mathcal{K}_{0})$. Hence, $\mathcal{K}_{0}$ and
$\mathcal{K}$ generate the same variety. Moreover, MV-reducts of
subalgebras (homomorphic images, direct products respectively) of
algebras from $\mathsf{D}(\mathcal{K})$ are subalgebras (homomorphic
images, direct products respectively) of the corresponding
MV-reducts. Therefore, the MV-reduct of any algebra in
$\mathsf{VD}(\mathcal{K})$ is in $\mathsf{HSP}(\mathcal{K}_{0})=
\mathsf{HSP}(\mathcal{K})=\mathsf{V}(\mathcal{K})$, and claim (1) is
proved.

(2) Let $(\mathbf{A},\tau )\in \mathcal{V}_{SMMV}$. We claim that
the map $ \Phi :a\mapsto (\tau (a),a)$ is an embedding of
$(\mathbf{A},\tau )$ into $D( \mathbf{A})$. Clearly, $\Phi $ is
one-one. Moreover, since $\tau $ is an MV-endomorphism, $\Phi $ is
an MV-homomorphism. Finally, $\Phi (\tau (a))=(\tau (\tau (a)),\tau
(a))=(\tau (a),\tau (a))=\tau _{A}((\tau (a),a))=\tau _{A}(\Phi
(a))$. Hence, $\Phi $ is compatible with $\tau $, and
$(\mathbf{A},\tau )\in \mathsf{ISD}(\mathcal{V})$. Conversely, the
MV-reduct of any algebra in $\mathsf{D}(\mathcal{V})$ is in
$\mathcal{V}$, (being a direct product of algebras in
$\mathcal{V}$), and hence the MV-reduct of any member of
$\mathsf{ISD}(\mathcal{V})$ is in $\mathsf{IS}(\mathcal{V})=
\mathcal{V}$. Hence, any member of $\mathsf{ISD}(\mathcal{V})$ is in
$\mathcal{V }_{SMMV}$.
\end{proof}

\begin{lemma}\label{main}
Let $\mathcal{K}$ be a class of MV-algebras. Then:
\newline {\rm (1)}
$\mathsf{DH}(\mathcal{K})\subseteq
\mathsf{HD}(\mathcal{K})$.
\newline {\rm (2)}
$\mathsf{DS}(\mathcal{K})\subseteq
\mathsf{ISD}(\mathcal{K})$.
\newline {\rm (3)}
$\mathsf{DP}(\mathcal{K})\subseteq \mathsf{IPD}(\mathcal{K})$.
\newline {\rm (4)}
$\mathsf{VD}(\mathcal{K})=\mathsf{I}$\textsf{S}$\mathsf{D}(\mathsf{V}(
\mathcal{K}))$.
\end{lemma}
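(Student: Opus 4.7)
The plan is to treat parts (1), (2), (3) as compatibility results between the diagonalization construction $\mathsf{D}$ and the class operators $\mathsf{H}$, $\mathsf{S}$, $\mathsf{P}$, and then to assemble them with Lemma~\ref{wd} in (4). For each of (1)--(3) the approach is identical: produce the natural ``doubled'' map or algebra and check that it respects the diagonal $\tau$-operator. For (1), a surjective MV-homomorphism $h\colon\mathbf{A}\twoheadrightarrow\mathbf{B}$ induces the surjective MV-homomorphism $h\times h\colon\mathbf{A}\times\mathbf{A}\twoheadrightarrow \mathbf{B}\times\mathbf{B}$, and the identity $(h\times h)(a,a)=(h(a),h(a))$ shows this carries $\tau_A$ to $\tau_B$; hence $D(\mathbf{B})\in \mathsf{HD}(\mathcal{K})$. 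For (2), given $\mathbf{B}\le\mathbf{A}$, the inclusion $\mathbf{B}\times\mathbf{B}\hookrightarrow\mathbf{A}\times\mathbf{A}$ is evidently compatible with the two diagonal $\tau$'s, so $D(\mathbf{B})\hookrightarrow D(\mathbf{A})$ as SMMV-algebras, giving $\mathsf{DS}(\mathcal{K})\subseteq\mathsf{SD}(\mathcal{K})\subseteq\mathsf{ISD}(\mathcal{K})$. For (3), the canonical MV-isomorphism $(\prod_i\mathbf{A}_i)\times(\prod_i\mathbf{A}_i)\cong \prod_i(\mathbf{A}_i\times\mathbf{A}_i)$ via $((a_i),(b_i))\mapsto((a_i,b_i))_i$ sends the diagonal $\tau$ on the left to the coordinatewise product of the $\tau_{A_i}$'s on the right, yielding $D(\prod_i\mathbf{A}_i)\cong \prod_i D(\mathbf{A}_i)$. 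In all three cases the only nonmechanical point is $\tau$-compatibility, which is immediate because $\tau$ is defined by projecting onto the diagonal and each of the three maps is diagonal in the appropriate sense.

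For (4) I would split into the two inclusions. The direction $\mathsf{VD}(\mathcal{K})\subseteq \mathsf{ISD}(\mathsf{V}(\mathcal{K}))$ is essentially free from Lemma~\ref{wd}: its part (1) gives $\mathsf{VD}(\mathcal{K})\subseteq \mathsf{V}(\mathcal{K})_{SMMV}$, and its part (2) identifies $\mathsf{V}(\mathcal{K})_{SMMV}$ with $\mathsf{ISD}(\mathsf{V}(\mathcal{K}))$. For the converse I would chain (1)--(3): any $\mathbf{B}\in \mathsf{V}(\mathcal{K})=\mathsf{HSP}(\mathcal{K})$ is a homomorphic image of some $\mathbf{S}\le \prod_i\mathbf{A}_i$ with the $\mathbf{A}_i\in\mathcal{K}$; then by (1), $D(\mathbf{B})$ is a homomorphic image of $D(\mathbf{S})$; by (2), $D(\mathbf{S})$ embeds (up to isomorphism) in $D(\prod_i\mathbf{A}_i)$; and by (3), $D(\prod_i\mathbf{A}_i)\cong \prod_i D(\mathbf{A}_i)\in\mathsf{PD}(\mathcal{K})$. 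Composing, $D(\mathbf{B})\in \mathsf{HSP}(\mathsf{D}(\mathcal{K}))=\mathsf{VD}(\mathcal{K})$, after absorbing stray $\mathsf{I}$'s into the variety closure; closure under isomorphism finishes $\mathsf{ISD}(\mathsf{V}(\mathcal{K}))\subseteq \mathsf{VD}(\mathcal{K})$.

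I do not expect any genuinely hard step. The one place where I would proceed with care is the converse inclusion in (4): one is tempted to try to push $\mathsf{D}$ \emph{inward} through $\mathsf{H}$, $\mathsf{S}$, $\mathsf{P}$, but $\mathsf{D}$ admits no inverse on subalgebras---e.g., the ``identity'' subalgebra $\{(a,a):a\in A\}$ of $D(\mathbf{A})$ is not of the form $D(\mathbf{C})$---which is precisely why Lemma~\ref{wd} must do the work for the easy direction. Beyond that, everything reduces to bookkeeping with class operators and the three routine $\tau$-compatibility checks outlined above.
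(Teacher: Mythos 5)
Your proposal is correct and follows essentially the same route as the paper: the same doubled maps $h\times h$, the inclusion $\mathbf{B}\times\mathbf{B}\hookrightarrow\mathbf{A}\times\mathbf{A}$, and the canonical isomorphism $D(\prod_i\mathbf{A}_i)\cong\prod_iD(\mathbf{A}_i)$ for (1)--(3), and for (4) the same two inclusions, one from Lemma~\ref{wd} and one from pushing $\mathsf{D}$ past $\mathsf{H}$, $\mathsf{S}$, $\mathsf{P}$ to get $\mathsf{DV}(\mathcal{K})\subseteq\mathsf{VD}(\mathcal{K})$. The only cosmetic remark is that at the end of (4) you should absorb the stray $\mathsf{S}$ as well as the $\mathsf{I}$ into the variety closure (i.e., $\mathsf{ISDV}(\mathcal{K})\subseteq\mathsf{ISVD}(\mathcal{K})=\mathsf{VD}(\mathcal{K})$), exactly as the paper does.
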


\begin{proof}
(1) Let $D(\mathbf{C})\in \mathsf{DH}(\mathcal{K})$. Then there are
$ \mathbf{A}\in \mathcal{K}$ and a homomorphism $h$ from
$\mathbf{A}$ onto $ \mathbf{C}$. Let for all $a,b\in A$,
$h^{*}(a,b)=(h(a),h(b))$. We claim that $h^{*}$ is a homomorphism
from $D(\mathbf{A})$ onto $D(\mathbf{C})$. That $ h^{*}$ is an
MV-homomorphism is clear. We verify that $h^{*}$ is compatible with
$\tau _{A}$. We have $h^{*}(\tau
_{A}(a,b))=h^{*}(a,a)=(h(a),h(a))=\tau _{C}(h(a),h(b))=\tau
_{C}(h^{*}(a,b)).$ Finally, since $h$ is onto, given $(c,d)\in
C\times C$, there are $a,b\in A$ such that $h(a)=c$ and $h(b)=d$.
Hence, $h^{*}(a,b)=(c,d)$, $h^{*}$ is onto, and $D(\mathbf{C})\in
\mathsf{HD}(\mathcal{K})$.

(2) Almost trivial.

(3) Let $\mathbf{A}=\prod_{i\in I}(\mathbf{A}_{i})\in
\mathsf{P}(\mathcal{K})$, where each $\mathbf{A}_{i}$ is in
$\mathcal{K}$. Then the map
\begin{center}
$\Phi :((a_{i}:i\in I),(b_{i}:i\in I))\mapsto ((a_{i},b_{i}):i\in
I)$
\end{center}
is an isomorphism from $D(\mathbf{A})$ onto $\prod_{i\in
I}D(\mathbf{A}_{i})$. Indeed, it is clear that $\Phi $ is an
MV-isomorphism. Moreover, denoting the state morphism of
$\prod_{i\in I}D(\mathbf{A}_{i})$ by $\tau ^{*}$, we get:
\begin{eqnarray*}&\Phi (\tau _{A}((a_{i}:i\in I),(b_{i}:i\in I)))=\Phi
((a_{i}:i\in I),(a_{i}:i\in I))=\\
&= ((a_{i},a_{i}):i\in I)=(\tau _{A_{i}}(a_{i},b_{i}):i\in I)=\tau
^{*}(\Phi ((a_{i}:i\in I),(b_{i}:i\in I))),
\end{eqnarray*}
and hence $\Phi $ is an SMMV-isomorphism.

(4) By (1), (2) and (3),
$\mathsf{DV}(\mathcal{K})=\mathsf{DHSP}(\mathcal{K} )\subseteq
\mathsf{HSPD}(\mathcal{K})=\mathsf{VD}(\mathcal{K})$, and hence $
\mathsf{ISDV}(\mathcal{K})\subseteq
\mathsf{ISVD}(\mathcal{K})=\mathsf{VD}( \mathcal{K})$. Conversely,
by Lemma \ref{wd}(1), $\mathsf{VD}(\mathcal{K} )\subseteq
\mathsf{V}(\mathcal{K})_{SMMV}$, and by Lemma \ref{wd}(2), $
\mathsf{V}(\mathcal{K})_{SMMV}=\mathsf{ISDV}(\mathcal{K})$. This
settles the claim.
\end{proof}

\begin{theorem}\label{A}
{\rm (1)} For every MV-algebra $\mathbf{A}$,
$\mathsf{V}(D(\mathbf{A}))=\mathsf{V}( \mathbf{A})_{SMMV}$.
\newline
{\rm (2)} Let $\mathbf{A}$ and $\mathbf{B}$ be MV-algebras. Then
$\mathsf{V}(D( \mathbf{A}))=\mathsf{V}(D(\mathbf{B}))$ iff
$\mathsf{V}(\mathbf{A})=\mathsf{V }(\mathbf{B})$.
\newline {\rm (3)}
The variety of all SMMV-algebras is generated by $D([0,1]_{MV})$ as
well as by any $D(\mathbf{A})$ such that $\mathbf{A}$ generates the
variety of MV-algebras.\newline {\rm (4)} Let $\mathbf{C}_1$ be
Chang's algebra and let $\mathcal{C}$ be the variety generated by
it. Then $\mathcal{C}_{SMMV}$ is generated by $D(\mathbf{C}_1)$.
\end{theorem}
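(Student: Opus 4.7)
The plan is that, once Lemmas~\ref{wd} and \ref{main} are in hand, all four parts of Theorem~\ref{A} follow by short, essentially formal arguments; the heart of the matter was done in those two lemmas.

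For (1), I would apply Lemma~\ref{main}(4) to the singleton class $\mathcal{K}=\{\mathbf{A}\}$, obtaining
\[
\mathsf{V}(D(\mathbf{A}))=\mathsf{VD}(\{\mathbf{A}\})=\mathsf{ISD}(\mathsf{V}(\{\mathbf{A}\}))=\mathsf{ISD}(\mathsf{V}(\mathbf{A})),
\]
and then invoke Lemma~\ref{wd}(2) with $\mathcal{V}=\mathsf{V}(\mathbf{A})$, which gives $\mathsf{ISD}(\mathsf{V}(\mathbf{A}))=\mathsf{V}(\mathbf{A})_{SMMV}$. Combining these yields the equality in (1).

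For (2), the forward direction is immediate from (1): if $\mathsf{V}(\mathbf{A})=\mathsf{V}(\mathbf{B})$, then $\mathsf{V}(D(\mathbf{A}))=\mathsf{V}(\mathbf{A})_{SMMV}=\mathsf{V}(\mathbf{B})_{SMMV}=\mathsf{V}(D(\mathbf{B}))$. For the converse, the key observation I would use is that the operation $\mathcal{V}\mapsto \mathcal{V}_{SMMV}$ is injective on varieties of MV-algebras: indeed, for any MV-algebra $\mathbf{C}\in\mathcal{V}$ the pair $(\mathbf{C},\mathrm{id}_C)$ is in $\mathcal{V}_{SMMV}$ and has $\mathbf{C}$ as its MV-reduct, while conversely every MV-reduct of a member of $\mathcal{V}_{SMMV}$ lies in $\mathcal{V}$. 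Thus $\mathcal{V}$ can be recovered from $\mathcal{V}_{SMMV}$ as the class of its MV-reducts, so $\mathsf{V}(\mathbf{A})_{SMMV}=\mathsf{V}(\mathbf{B})_{SMMV}$ forces $\mathsf{V}(\mathbf{A})=\mathsf{V}(\mathbf{B})$.

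Part (3) is then just an instance of (1): since $[0,1]_{MV}$, and more generally any generator $\mathbf{A}$ of the variety of all MV-algebras, satisfies $\mathsf{V}(\mathbf{A})=\mathcal{MV}$, we get $\mathsf{V}(D(\mathbf{A}))=\mathcal{MV}_{SMMV}$, which is the variety of all SMMV-algebras. Part (4) is the analogous specialization: $\mathbf{C}_1$ generates $\mathcal{C}$ by assumption, so (1) gives $\mathsf{V}(D(\mathbf{C}_1))=\mathsf{V}(\mathbf{C}_1)_{SMMV}=\mathcal{C}_{SMMV}$. There is no real obstacle here; the only subtle point worth double-checking, and therefore the one I would write out most carefully, is the injectivity argument underlying the converse of (2), since it is the one place where we must use that the MV-reduct functor from $\mathcal{V}_{SMMV}$ to $\mathcal{V}$ is surjective on objects.
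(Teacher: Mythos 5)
Your proposal is correct and follows essentially the same route as the paper: part (1) via Lemma~\ref{main}(4) combined with Lemma~\ref{wd}(2), the converse in (2) by recovering $\mathcal{V}$ as the class of MV-reducts of $\mathcal{V}_{SMMV}$ (using $(\mathbf{C},\mathrm{Id}_C)$), and (3), (4) as direct specializations of (1). No gaps.
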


\begin{proof}
(1) By Lemma \ref{main}(4), $\mathsf{VD}(\mathbf{A})=\mathsf{V}(D(
\mathbf{A}))=\mathsf{I}$\textsf{S}$\mathsf{D}(\mathsf{V}(\mathbf{A}))$.
Moreover, by Lemma \ref{wd}(2),
$\mathsf{V}(\mathbf{A})_{SMMV}=\mathsf{ISDV }(\mathbf{A})$. Hence,
$\mathsf{V}(D(\mathbf{A}))=\mathsf{V}(\mathbf{A} )_{SMMV}$.

(2) We have
$\mathsf{V}(D(\mathbf{A}))=\mathsf{V}(\mathbf{A})_{SMMV}$ and $
\mathsf{V}(D(\mathbf{B}))=\mathsf{V}(\mathbf{B})_{SMMV}$. Clearly,
$\mathsf{V} (\mathbf{A})=\mathsf{V}(\mathbf{B})$ implies
$\mathsf{V}(\mathbf{A})_{SMMV}= \mathsf{V}(\mathbf{B})_{SMMV}$, and
hence $\mathsf{V}(D(\mathbf{A}))=\mathsf{V }(D(\mathbf{B}))$.
Conversely, $\mathsf{V}(D(\mathbf{A}))=\mathsf{V}(D( \mathbf{B}))$
implies $\mathsf{V}(\mathbf{A})_{SMMV}=\mathsf{V}(\mathbf{B}
)_{SMMV}$. But \emph{any} algebra $\mathbf{C}\in
\mathsf{V}(\mathbf{A})$ is the MV-reduct of an algebra in
$\mathsf{V}(\mathbf{A})_{SMMV}$, namely, of $( \mathbf{C},{\rm
Id}_C)$, where ${\rm Id}_C$ is the identity on $C$.

It follows that, if $\mathsf{V}(\mathbf{A})_{SMMV}=
\mathsf{V}(\mathbf{B})_{SMMV}$, then the classes of MV-reducts of
$\mathsf{V}(\mathbf{A})_{SMMV}$ and of $\mathsf{V}(\mathbf{B}
)_{SMMV}$ coincide, and hence
$\mathsf{V}(\mathbf{A})=\mathsf{V}(\mathbf{B})$.

(3) Since $\mathsf{V}([0,1]_{MV})$ is the variety $\mathcal{MV}$ of
all MV-algebras, $\mathsf{V}(D([0,1]_{MV}))$ is
$\mathcal{MV}_{SMMV}$, that is, the variety of all SMMV-algebras.
The same argument holds if we replace $ [0,1]_{MV}$ by any
MV-algebra which generates the whole variety $\mathcal{MV}$.

(4) Completely parallel to (3).
\end{proof}

Another consequence is the decidability of the variety
$\mathcal{SMMV}$ of all SMMV-algebras.

\begin{theorem} $\mathcal{SMMV}$ is decidable.
\end{theorem}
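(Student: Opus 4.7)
The plan is to reduce the equational theory of $\mathcal{SMMV}$ to that of the standard MV-algebra $[0,1]_{MV}$, whose equational theory is decidable by the decidability of \L ukasiewicz logic (via McNaughton's theorem). By Theorem \ref{A}(3), $\mathcal{SMMV}=\mathsf{V}(D([0,1]_{MV}))$, so an equation $p\approx q$ in the language of SMMV-algebras is valid in $\mathcal{SMMV}$ if and only if it is valid in $D([0,1]_{MV})$. Recall that the universe of $D([0,1]_{MV})$ is $[0,1]\times[0,1]$, the MV-operations act coordinatewise, and $\tau(a,b)=(a,a)$.

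Next I would define, for each SMMV-term $t(x_1,\ldots,x_n)$, a pair of MV-terms $t^{L}$ and $t^{R}$ in the $2n$ variables $a_1,\ldots,a_n,b_1,\ldots,b_n$, by induction on the complexity of $t$: set $x_i^{L}=a_i$ and $x_i^{R}=b_i$; for the MV-operations, set $(\neg t)^{L}=\neg t^{L}$, $(\neg t)^{R}=\neg t^{R}$, $(s\oplus t)^{L}=s^{L}\oplus t^{L}$, and $(s\oplus t)^{R}=s^{R}\oplus t^{R}$; and for the state operator, set $(\tau t)^{L}=(\tau t)^{R}=t^{L}$, reflecting the fact that $\tau_A(u,v)=(u,u)$ forgets the second coordinate. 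A straightforward induction then shows that under any assignment $x_i\mapsto(a_i,b_i)$ the term $t$ evaluates in $D([0,1]_{MV})$ to $(t^{L},t^{R})$. Consequently, $p\approx q$ holds in $D([0,1]_{MV})$ if and only if both of the MV-equations $p^{L}\approx q^{L}$ and $p^{R}\approx q^{R}$ hold in $[0,1]_{MV}$.

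The translation $t\mapsto(t^{L},t^{R})$ is clearly effective, and each of the two resulting MV-equations in $[0,1]_{MV}$ can be decided by the decidability of the equational theory of MV-algebras. Combining these steps yields an algorithm that, given an equation in the SMMV-language, decides its validity in $\mathcal{SMMV}$. The only nontrivial ingredient is Theorem \ref{A}(3); the remaining steps are a routine coordinate analysis, so no real obstacle is anticipated.
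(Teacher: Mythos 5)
Your proposal is correct and follows essentially the same route as the paper: both reduce validity in $\mathcal{SMMV}$ to validity in $D([0,1]_{MV})$ via Theorem \ref{A}(3), define the same effective translation $t\mapsto(t^{L},t^{R})$ (the paper writes $t^1,t^2$, with $(\tau s)^1=(\tau s)^2=s^1$), prove the same coordinatewise evaluation lemma by induction, and conclude by the decidability of the equational theory of $[0,1]_{MV}$. No gaps.
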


\begin{proof}
We associate to every term $t(x_1,\dots,x_n)$ of SMMV-algebras a
pair of terms $t^1$, $t^2$ whose variables are among
$x_1^1,x_1^2,\dots,x_n^1 ,x_n^2$ by induction as follows: If $t$ is
a variable, say, $t=x_i$, then $t^1=x_i^1$ and $t^2=x_i^2$; if
$t=0$, then $t^1=t^2=0$. If $t=\neg s$, then $t^1=\neg s^1$ and
$t^2=\neg s^2$; if $t=s \oplus u$, then $t^1=s^1 \oplus u^1$ and
$t^2=s^2 \oplus u^2$. Finally, if $t=\tau(s)$, then $t^1=t^2=s^1$.
The following lemma is straightforward.

\begin{lemma}\label{easy}
Let $a_1^1,a^2_1,\dots,a^1_n,a^2_n,b^1,b^2 \in [0,1]$ and let
$t(x_1,\dots,x_n)$ be a term. Then the following are equivalent:

{\rm (1)} $t((a_1^1,a_1^2), \dots,(a_n^1,a_n^2))=(b^1,b^2)$ holds in
$D([0,1]_{MV})$.

{\rm (2)} $t^i(a_1^1,a_1^2,\dots,a_n^1,a_n^2)=b^i$, for $i=1,2$
holds in $[0,1]_{MV}.$
\end{lemma}

As a consequence, we obtain that an equation $t=s$ holds identically
in $D([0,1]_{MV})$ iff $t^1=s^1$ and $t^2=s^2$ hold identically in
$[0,1]_{MV}$. Since validity of an equation in $[0,1]_{MV}$ is
decidable, the equational logic of $D([0,1]_{MV})$  is decidable,
and since $D([0,1]_{MV})$  generates the whole variety of
SMMV-algebras, the claim follows.
\end{proof}

\section{Every subdirectly irreducible SMMV-algebra is subdiagonal}

We are in a position to prove Theorem \ref{subdiagonal}, stating
that every subdirectly irreducible SMMV-algebra is subdiagonal
(subalgebra of a diagonal SMMV-algebra). We start from some easy
facts.

First of all, any linearly ordered SMMV-algebra $(\mathbf{A},\tau )$
is subdiagonal, being isomorphic to a subalgebra of $(\tau
(\mathbf{A})\times \mathbf{A},\tau ^{*})$, with $\tau ^{*}(\tau
(a),a)=(\tau (a),\tau (a))$. Next we prove that the variety of
SMMV-algebras has CEP.

\begin{lemma}
\label{cep} The variety of SMMV-algebras has Congruence Extension Property.
\end{lemma}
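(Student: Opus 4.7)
The plan is to reduce CEP for SMMV-algebras to CEP for MV-algebras, exploiting that in an SMMV-algebra the operator $\tau$ is an MV-endomorphism, so it is order-preserving and preserves $\odot$. Recall from the preliminaries that SMV-congruences on $(\mathbf{A},\tau)$ are in bijection with $\tau$-filters of $\mathbf{A}$, that is, MV-filters $F$ of $\mathbf{A}$ closed under $\tau$. Accordingly, CEP for the variety of SMMV-algebras amounts to the following statement: if $(\mathbf{B},\tau|_B)$ is an SMMV-subalgebra of $(\mathbf{A},\tau)$ and $F$ is a $\tau$-filter of $\mathbf{B}$, then there is a $\tau$-filter $\overline{F}$ of $\mathbf{A}$ with $\overline{F}\cap B=F$.

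The natural candidate is $\overline{F}:=\{a\in A:\exists f\in F,\ f\leq a\}$, i.e., the MV-filter of $\mathbf{A}$ generated by $F$. First I would check that $\overline{F}$ is indeed an MV-filter of $\mathbf{A}$: upward closure is immediate, and closure under $\odot$ follows because $F$ (being an MV-filter of $\mathbf{B}$) is closed under $\odot$, so $f_1\leq a_1$ and $f_2\leq a_2$ with $f_1,f_2\in F$ yield $f_1\odot f_2\in F$ and $f_1\odot f_2\leq a_1\odot a_2$. The equality $\overline{F}\cap B=F$ is then immediate from the upward closure of $F$ inside $\mathbf{B}$: if $b\in B$ and $f\leq b$ for some $f\in F$, then already $b\in F$; this is precisely CEP for MV-algebras.

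The central point, and the only one that uses the SMMV-axioms rather than just the SMV-axioms, is showing $\tau(\overline{F})\subseteq\overline{F}$. Given $a\in\overline{F}$, pick $f\in F$ with $f\leq a$. Since $(\mathbf{A},\tau)$ is an SMMV-algebra, $\tau$ is an MV-endomorphism and hence order-preserving, so $\tau(f)\leq\tau(a)$. Because $\mathbf{B}$ is an SMMV-subalgebra and $F$ is a $\tau$-filter of $\mathbf{B}$, we have $\tau(f)\in F\subseteq\overline{F}$; upward closure of $\overline{F}$ then yields $\tau(a)\in\overline{F}$, as required.

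The main obstacle one might worry about is whether the MV-filter generated in $\mathbf{A}$ automatically respects $\tau$, but this reduces to the one-line computation above precisely because of clause (c) in the definition of an SMMV-algebra, which makes $\tau$ a homomorphism. I expect no further subtleties: the argument is almost entirely a transport of the well-known CEP for MV-algebras across the $\tau$-layer, with the extra verification of $\tau$-closure being the only genuinely new step.
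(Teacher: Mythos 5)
Your proof is correct and follows essentially the same route as the paper: pass from SMMV-congruences to $\tau$-filters, extend a $\tau$-filter of the subalgebra by taking its upward closure (the generated MV-filter) in the larger algebra, and use monotonicity of $\tau$ to see that this is again a $\tau$-filter which restricts to the original one. The only inessential difference is that you derive monotonicity from the SMMV-axiom (c), whereas the paper simply invokes monotonicity of $\tau$, which in fact already holds in every SMV-algebra.
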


\begin{proof}
Let $(\mathbf{A},\tau)\subseteq (\mathbf{B},\tau)$ be SMMV-algebras
and $\theta$ a congruence on $(\mathbf{A},\tau)$. Thus, $1/\theta$
is a $ \tau $-filter of $(\mathbf{A},\tau)$. By monotonicity of
$\tau$ the upward closure (in $\mathbf{B}$) of $1/\theta$ is a
$\tau$-filter of $(\mathbf{B} ,\tau)$, which restricts to $1/\theta$
on $(\mathbf{A},\tau)$. This proves the claim.
\end{proof}

The next lemma is also easy:

\begin{lemma}\label{SPU-closure}
The class of subdiagonal SMMV-algebras is closed under subalgebras
and ultraproducts.
\end{lemma}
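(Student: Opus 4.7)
The plan is to handle the two closure properties separately; neither poses a serious obstacle, but the ultraproduct case requires a careful chasing of the commutation between ultraproducts and finite direct products.

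\textbf{Closure under subalgebras.} Suppose $(\mathbf{A},\tau)$ is subdiagonal, so there is an embedding $\iota\colon(\mathbf{A},\tau)\hookrightarrow(\mathbf{B}\times\mathbf{C},\tau^*)$ with $\mathbf{B}\subseteq\mathbf{C}$ MV-chains and $\tau^*(b,c)=(b,b)$. If $(\mathbf{A}',\tau)$ is any SMMV-subalgebra of $(\mathbf{A},\tau)$, then the restriction of $\iota$ to $A'$ is again an embedding into the same diagonal SMMV-algebra, so $(\mathbf{A}',\tau)$ is subdiagonal.

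\textbf{Closure under ultraproducts.} Let $(\mathbf{A}_i,\tau_i)_{i\in I}$ be subdiagonal SMMV-algebras with embeddings $\iota_i\colon(\mathbf{A}_i,\tau_i)\hookrightarrow(\mathbf{B}_i\times\mathbf{C}_i,\tau_i^*)$ into diagonals, and let $\mathcal{U}$ be an ultrafilter on $I$. First I would note that the pointwise map $\prod_\mathcal{U}\iota_i$ embeds $\prod_\mathcal{U}(\mathbf{A}_i,\tau_i)$ into $\prod_\mathcal{U}(\mathbf{B}_i\times\mathbf{C}_i,\tau_i^*)$, so it suffices to show the latter is (isomorphic to) a diagonal SMMV-algebra.

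Set $\mathbf{B}'=\prod_\mathcal{U}\mathbf{B}_i$ and $\mathbf{C}'=\prod_\mathcal{U}\mathbf{C}_i$. Both are MV-chains, since linearity is expressed by the first-order sentence $\forall x\forall y\,(x\leq y\vee y\leq x)$ and Łoś's theorem applies. The componentwise inclusions $\mathbf{B}_i\subseteq\mathbf{C}_i$ yield, again by Łoś, an MV-embedding $\mathbf{B}'\hookrightarrow\mathbf{C}'$, so after identification we may assume $\mathbf{B}'\subseteq\mathbf{C}'$. Consider the natural map
\[
\Phi\colon\prod_\mathcal{U}(\mathbf{B}_i\times\mathbf{C}_i)\longrightarrow\mathbf{B}'\times\mathbf{C}',\qquad [(b_i,c_i)]_\mathcal{U}\longmapsto\bigl([b_i]_\mathcal{U},[c_i]_\mathcal{U}\bigr).
\]
Since ultraproducts commute with finite direct products, $\Phi$ is an MV-isomorphism. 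It also respects the diagonal state morphism: on the left side $\tau^*_i(b_i,c_i)=(b_i,b_i)$ gives $\Phi([(b_i,b_i)]_\mathcal{U})=([b_i]_\mathcal{U},[b_i]_\mathcal{U})$, which is exactly the diagonal state morphism applied to $\Phi([(b_i,c_i)]_\mathcal{U})$. Hence $\prod_\mathcal{U}(\mathbf{B}_i\times\mathbf{C}_i,\tau_i^*)$ is isomorphic to the diagonal SMMV-algebra on $\mathbf{B}'\times\mathbf{C}'$, completing the argument.

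The only conceptual point to watch is that "being a chain" and "inclusion $\mathbf{B}_i\subseteq\mathbf{C}_i$" must be transported through the ultraproduct, both of which are immediate applications of Łoś's theorem. Beyond that, the proof is entirely a matter of checking that the obvious maps behave as expected.
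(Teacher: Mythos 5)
Your proof is correct and follows essentially the same route as the paper: closure under subalgebras is immediate from the definition, and closure under ultraproducts combines the standard fact $\mathsf{P}_{\mathsf{U}}\mathsf{S}(\mathcal{K})\subseteq \mathsf{SP}_{\mathsf{U}}(\mathcal{K})$ (your pointwise embedding $\prod_{\mathcal U}\iota_i$) with the observation that an ultraproduct of diagonal SMMV-algebras is again diagonal, via exactly the isomorphism $\Phi$ the paper uses. You merely spell out the Łoś-theorem details (chains, inclusion $\mathbf{B}'\subseteq\mathbf{C}'$) that the paper leaves implicit.
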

\begin{proof}
Closure under $\mathsf{S}$ is definitional. Closure under
$\mathsf{P} _{\mathsf{U}}$ follows from the following facts:

(1) For every class $\mathcal{K}$ of algebras of the same type
$\mathsf{P}_{ \mathsf{U}}\mathsf{S}(\mathcal{K})\subseteq
\mathsf{SP}_{\mathsf{U}}( \mathcal{K})$ (this is a well-known fact
of Universal Algebra).

(2) Every ultraproduct $(\prod_{i\in I}(\mathbf{B}_{i}\times
\mathbf{C} _{i},\tau _{i}))/U$ of diagonal SMMV-algebras is
isomorphic to the diagonal SMMV-algebra $((\prod_{i\in
I}\mathbf{B}_{i})/U\times (\prod_{i\in I}\mathbf{ C}_{i})/U,\tau
_{U}))$, where $\tau _{U}((b_{i}:i\in I)/U,(c_{i}:i\in
I)/U)=((b_{i}:i\in I)/U,(b_{i}:i\in I)/U)$, with respect to the
isomorphism $ ((b_{i},c_{i}):i\in I)/U\mapsto ((b_{i}:i\in
I)/U,(c_{i}:i\in I)/U)$.
\end{proof}

To deal with homomorphic images we need the following
definition:

\begin{definition}
An SMMV-algebra $(\mathbf{A},\tau )$ is said to be  \emph{ skew
diagonal} if it has the form $(\mathbf{B}\times \mathbf{C}/\varphi
,\tau )$, where $\mathbf{B}$ and $\mathbf{C}$ are MV-chains,
$\mathbf{B}$ is a subalgebra of $\mathbf{C}$, $\varphi $ is a
congruence of $\mathbf{C}$ and $ \tau $ is defined $\tau
(b,c/\varphi )=(b,b/\varphi )$ for all $b\in B$ and $ c\in C$.
\end{definition}

The projection onto the first coordinate is a homomorphism from the
skew-diagonal algebra $(\mathbf{B}\times \mathbf{C} /\varphi ,\tau
)$ onto $(\mathbf{B},{\rm Id}_{B}).$  Compatibility with $\tau $ is
proved as follows:  $\pi _{1}\tau (b,c)=\pi
_{1}(b,b)=b=\rm{Id}_{B}\pi _{1}(b,c).$

\begin{lemma}\label{factor-cong}
Let $(\mathbf{A},\tau )$ be a subdiagonal algebra with $
\mathbf{A}\subseteq \mathbf{B}\times \mathbf{C}$, and $\theta $ a
congruence on $(\mathbf{A},\tau )$. Then there are MV-chains
$\mathbf{D}\subseteq \mathbf{E}$, and a congruence $\varphi $ on
$\mathbf{E}$ such that $(\mathbf{ A},\tau )/\theta $ is subdirectly
embedded into a skew-diagonal algebra $( \mathbf{D}\times
\mathbf{E}/\varphi ,\tau )$.
\end{lemma}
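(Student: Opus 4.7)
The plan is to combine the congruence extension property (Lemma~\ref{cep}) with the fact that filters of a product of MV-algebras factor as products of filters, and then to build the chain $\mathbf{E}$ by a lex-sum construction at the level of the associated $\ell$-groups.

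\emph{Step 1: extend the congruence.} By Lemma~\ref{cep}, extend $\theta$ to a congruence $\Theta$ on $(\mathbf{B}\times\mathbf{C},\tau)$ with $\Theta\cap A^{2}=\theta$, so that $(\mathbf{A},\tau)/\theta$ embeds in $(\mathbf{B}\times\mathbf{C},\tau)/\Theta$. The filter $1/\Theta$ of $\mathbf{B}\times\mathbf{C}$ decomposes as a product $F_{1}\times F_{2}$, with $F_{1}$ a filter of $\mathbf{B}$ and $F_{2}$ a filter of $\mathbf{C}$, and the requirement $\tau(b,c)=(b,b)\in F_{1}\times F_{2}$ for all $(b,c)\in F_{1}\times F_{2}$ amounts to $F_{1}\subseteq F_{2}\cap B$. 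Hence $(\mathbf{B}\times\mathbf{C})/\Theta\cong(\mathbf{B}/F_{1})\times(\mathbf{C}/F_{2})$ with induced operator $\bar\tau(b/F_{1},c/F_{2})=(b/F_{1},b/F_{2})$, well defined by the inclusion above, and both factors are MV-chains.

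\emph{Step 2: take subdirect images.} Let $\mathbf{D}=\pi_{1}((\mathbf{A},\tau)/\theta)\subseteq\mathbf{B}/F_{1}$ and $\mathbf{C}^{*}=\pi_{2}((\mathbf{A},\tau)/\theta)\subseteq\mathbf{C}/F_{2}$; these are MV-chains into whose product $(\mathbf{A},\tau)/\theta$ is subdirectly embedded. The map $\iota\colon\mathbf{D}\to\mathbf{C}^{*}$, $b/F_{1}\mapsto b/F_{2}$, is a well-defined MV-homomorphism, because for any $b\in\pi_{1}(\mathbf{A})$ there is $c$ with $(b,c)\in\mathbf{A}$ and then $\tau(b,c)=(b,b)\in\mathbf{A}$ forces $b\in\pi_{2}(\mathbf{A})$.

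\emph{Step 3: build $\mathbf{E}$ via the Mundici correspondence.} This is the key obstacle. Write $\mathbf{D}=\Gamma(\mathbf{G}_{D},u_{D})$ and $\mathbf{C}^{*}=\Gamma(\mathbf{G}_{C^{*}},u_{C^{*}})$, and let $\bar\iota\colon\mathbf{G}_{D}\to\mathbf{G}_{C^{*}}$ be the $\ell$-homomorphism induced by $\iota$. Let $\mathbf{G}_{E}=\mathbf{G}_{C^{*}}\oplus_{\mathrm{lex}}\mathbf{G}_{D}$ be the lex sum with $\mathbf{G}_{C^{*}}$ dominant, a linearly ordered abelian group with strong unit $u_{E}=(u_{C^{*}},u_{D})$, and put $\mathbf{E}=\Gamma(\mathbf{G}_{E},u_{E})$, an MV-chain. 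The map $j(d)=(\bar\iota(d),d)$ is an $\ell$-embedding sending $u_{D}$ to $u_{E}$, so it identifies $\mathbf{D}$ with an MV-subalgebra of $\mathbf{E}$. The convex $\ell$-ideal $\{0\}\times\mathbf{G}_{D}\subseteq\mathbf{G}_{E}$ corresponds to a congruence $\varphi$ on $\mathbf{E}$ with $\mathbf{E}/\varphi\cong\mathbf{C}^{*}$ via $(c,d)/\varphi\mapsto c$, and the composite $\mathbf{D}\hookrightarrow\mathbf{E}\twoheadrightarrow\mathbf{E}/\varphi$ is precisely $\iota$.

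\emph{Step 4: assemble and verify.} The map $(b,c)/\theta\mapsto(b/F_{1},c/F_{2})\in\mathbf{D}\times\mathbf{E}/\varphi$ is injective (by CEP) and subdirect (by the definitions of $\mathbf{D}$ and $\mathbf{C}^{*}$), and it commutes with $\tau$, since in the skew-diagonal $\tau(b/F_{1},c/F_{2})=(b/F_{1},(b/F_{1})/\varphi)=(b/F_{1},\iota(b/F_{1}))=(b/F_{1},b/F_{2})$, which matches the image of $\tau((b,c)/\theta)=(b,b)/\theta$. The main difficulty of the plan is Step~3: the naive choices $\mathbf{E}=\mathbf{C}/F_{2}$ and $\mathbf{E}=\mathbf{D}$ both break down when $F_{1}\subsetneq F_{2}\cap B$, and the lex sum is what lets the kernel of $\iota$ sit beneath $\mathbf{C}^{*}$ as an infinitesimal layer while keeping $\mathbf{E}$ a chain.
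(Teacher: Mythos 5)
Your proof is correct, and its opening moves (CEP to extend $\theta$ to $\Theta$ on the diagonal superalgebra, the product decomposition $1/\Theta=F_1\times F_2$, and the observation that $\tau$-compatibility forces $F_1\subseteq F_2$) coincide with the paper's; where you genuinely diverge is in how you repair the failure of $b/F_1\mapsto b/F_2$ to be injective when $F_1\subsetneq F_2\cap B$. The paper stays inside the ambient product: it takes $\chi$ to be the congruence of $\mathbf{C}$ generated by $\psi_B$ (possible because subdiagonality gives $\mathbf{B}\subseteq\mathbf{C}$), notes $\chi\subseteq\psi_C$, and uses the congruence extension property of MV-algebras to get $\chi\cap B^2=\psi_B$; then $\mathbf{D}=\mathbf{B}/\psi_B$ embeds in $\mathbf{E}=\mathbf{C}/\chi$, and the residual congruence $\varphi=\psi_C/\chi$ exhibits $\mathbf{B}/\psi_B\times\mathbf{C}/\psi_C$ itself as skew diagonal, so no new algebra is constructed. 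You instead pass to the projections $\mathbf{D}$ and $\mathbf{C}^*$ of $\mathbf{A}/\theta$ and build $\mathbf{E}$ externally as $\Gamma\bigl(\mathbf{G}_{C^*}\oplus_{\mathrm{lex}}\mathbf{G}_D,(u_{C^*},u_D)\bigr)$, realizing the canonical chain homomorphism $\iota\colon\mathbf{D}\to\mathbf{C}^*$ as a subalgebra inclusion followed by the quotient by the convex ideal $\{0\}\times\mathbf{G}_D$. Both arguments are complete: your Step 3 assertions (that $(u_{C^*},u_D)$ is a strong unit, that $j$ is a unital $\ell$-embedding, that the convex ideal yields $\mathbf{E}/\varphi\cong\mathbf{C}^*$ with the composite equal to $\iota$) are standard and check out, with the degenerate case of a trivial $\mathbf{C}^*$ being harmless. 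The paper's route is more economical, uses only congruence and filter manipulation, and keeps $\mathbf{E}$ a quotient of the original chain $\mathbf{C}$; yours costs the Mundici $\Gamma$-machinery but isolates a reusable general fact, namely that any homomorphism between MV-chains factors as an embedding into a chain followed by a quotient by a congruence, which is exactly what the skew-diagonal shape requires.
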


\begin{proof}
Clearly, we may assume that the natural identity embedding
$\mathbf{A} \subseteq \mathbf{B}\times \mathbf{C}$ is subdirect. By
CEP, the congruence $\theta $ extends to a congruence $\psi $ on
$(\mathbf{B}\times \mathbf{C},\tau )$. Of course, $ \psi $ is also a
congruence on the MV-reduct $\mathbf{B}\times \mathbf{C}$. By
congruence distributivity, all congruences of finite products are
product congruences, so $\psi =\psi _{B}\times \psi _{C}$ for some
congruences $\psi _{B}$ on $\mathbf{B}$ and $\psi _{C}$ on
$\mathbf{C}$.

The congruences $\psi _{B}$ and $\psi _{C}$ are defined as follows:
$(b,b^{\prime })\in \psi _{B}$ iff there are $c,c^{\prime }\in C$
such that $((b,c),(b^{\prime },c^{\prime }))\in \psi $, and
$(c,c^{\prime })\in \psi _{C}$ iff there are $b,b^{\prime }\in B$
such that $((b,c),(b^{\prime },c^{\prime }))\in \psi $. Denoting by
$\theta _{1}$ and $\theta _{2}$ the congruences associated to the
projection maps, and using congruence distributivity, we have:
$((b,c),(b^{\prime },c^{\prime }))\in \psi $ iff $((b,c),(b^{\prime
},c^{\prime }))\in (\psi \vee \theta _{1})\wedge (\psi \vee \theta
_{2})$ iff $(b,b^{\prime })\in \psi _{B}$ and $ (c,c^{\prime })\in
\psi _{C}$, and $\psi =\psi _{B}\times \psi _{C}$. It follows:
\begin{equation*}
(\mathbf{B}\times \mathbf{C})/\psi =\mathbf{B}/\psi _{B}\times
\mathbf{C} /\psi _{C}
\end{equation*}
and moreover, since $\psi $ is compatible with $\tau $ we obtain
\begin{equation*}
\tau (b,c)/\psi =(b,b)/\psi =(b/\psi _{B},b/\psi _{C}).
\end{equation*}
Furthermore, $((b,1),(1,1))\in \psi$ implies $(\tau (b,1),\tau
(1,1))=((b,b),(1,1))\in \psi $. It follows that $(b,1)\in \psi _{B}$
implies $(b,1)\in \psi _{C}$. Let $\chi $ be the congruence of
$\mathbf{C}$ generated by $\psi _{B}$. Then $\chi \subseteq \psi
_{C}$, and by the CEP, $ \psi _{B}=\chi \cap B^{2}$. Now let
$\mathbf{D}=\mathbf{B}/\psi _{B}$, $ \mathbf{E}=\mathbf{C}/\chi $,
$\varphi =\chi /\psi _{C}$. Note that $\mathbf{D}$ and $\mathbf{E}$
are MV-chains. Moreover, by construction we have
$\mathbf{D}\subseteq \mathbf{E}$, and hence
\begin{equation*}
\mathbf{A}/\theta \subseteq (\mathbf{B}\times \mathbf{C})/\psi
=\mathbf{B} /\psi _{B}\times \mathbf{C}/\psi _{C}=\mathbf{D}\times
\mathbf{E}/\varphi
\end{equation*}
proving the claim for the MV-reducts of the appropriate algebras. In
particular, the embedding is subdirect. Furthermore,

\begin{equation*}
\tau (b,c)/\psi =(b/\psi _{B},b/\psi _{C})=(b/\psi _{B},(b/\chi )/\varphi )
\end{equation*}
and the embedding lifts to the full type of SMMV.
\end{proof}

\begin{lemma}\label{si-subdiag}
Let $(\mathbf{A},\tau )$ be a subdirectly irreducible SMMV-algebra,
and suppose that $(\mathbf{A},\tau )$ is a subalgebra of a skew
diagonal SMMV-algebra $(\mathbf{B}\times \mathbf{C}/\varphi ,\tau
^{*})$, and that the identity MV-embedding of $\mathbf{A}$ into
$(\mathbf{B}\times \mathbf{C} /\varphi )$ is subdirect. Then
$(\mathbf{A},\tau )$ is subdiagonal.
\end{lemma}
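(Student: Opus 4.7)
The plan is to exhibit two $\tau$-congruences $\theta_1,\theta_2$ on $(\mathbf{A},\tau)$ whose intersection is trivial, and then invoke subdirect irreducibility to force at least one of them to be $0_{\mathbf{A}}$. In each of the two resulting cases I will show directly that $(\mathbf{A},\tau)$ is subdiagonal.

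For $\theta_1$, I take the kernel of the first projection $\pi_1\colon\mathbf{A}\to\mathbf{B}$, which is surjective by the subdirectness hypothesis. Since $\tau^*(b,c/\varphi)=(b,b/\varphi)$, we have $\pi_1(\tau^*(b,c/\varphi))=b=\mathrm{Id}_B(\pi_1(b,c/\varphi))$, so $\pi_1$ is an SMMV-homomorphism into $(\mathbf{B},\mathrm{Id}_B)$ and $\theta_1=\ker\pi_1$ is a $\tau$-congruence. For $\theta_2$, I set $\varphi_B=\varphi\cap B^2$, which is an MV-congruence on $\mathbf{B}$. The assignment $b/\varphi_B\mapsto b/\varphi$ is a well-defined MV-embedding of the chain $\mathbf{B}/\varphi_B$ into the chain $\mathbf{C}/\varphi$; injectivity uses precisely the equality $\varphi_B=\varphi\cap B^2$. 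Identifying $\mathbf{B}/\varphi_B$ with its image, the algebra $\mathbf{D}=(\mathbf{B}/\varphi_B\times\mathbf{C}/\varphi,\tau_D)$ with $\tau_D(x,y)=(x,x)$ is a diagonal SMMV-algebra. The map $f\colon\mathbf{A}\to\mathbf{D}$ given by $f(b,c/\varphi)=(b/\varphi_B,c/\varphi)$ is clearly an MV-homomorphism, and $f(\tau^*(b,c/\varphi))=(b/\varphi_B,b/\varphi)=\tau_D(f(b,c/\varphi))$, so $f$ is an SMMV-homomorphism; I let $\theta_2=\ker f$.

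Now if $(a,a')\in\theta_1\cap\theta_2$ with $a=(b,c/\varphi)$ and $a'=(b',c'/\varphi)$, then $\theta_1$ yields $b=b'$ while $\theta_2$ yields $c/\varphi=c'/\varphi$, whence $a=a'$ as elements of $\mathbf{B}\times\mathbf{C}/\varphi$. Hence $\theta_1\cap\theta_2=0_{\mathbf{A}}$. Because $(\mathbf{A},\tau)$ is subdirectly irreducible, its minimum nontrivial $\tau$-congruence is contained in every nontrivial $\tau$-congruence, so the equality $\theta_1\cap\theta_2=0_{\mathbf{A}}$ forces $\theta_1=0_{\mathbf{A}}$ or $\theta_2=0_{\mathbf{A}}$. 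If $\theta_1=0_{\mathbf{A}}$, then $\pi_1$ embeds $\mathbf{A}$ into the chain $\mathbf{B}$, so $\mathbf{A}$ is linearly ordered and hence subdiagonal by the opening remark of this section. If $\theta_2=0_{\mathbf{A}}$, then $f$ embeds $(\mathbf{A},\tau)$ into the diagonal SMMV-algebra $\mathbf{D}$, so $(\mathbf{A},\tau)$ is again subdiagonal.

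I do not foresee a genuine obstacle in carrying this out; the only point that requires care is the construction of $\mathbf{D}$ and the check that it is truly diagonal, which hinges on the equality $\varphi_B=\varphi\cap B^2$ making $\mathbf{B}/\varphi_B\hookrightarrow\mathbf{C}/\varphi$ a genuine embedding of chains, so that the formula $\tau_D(x,y)=(x,x)$ coincides with the diagonal prescription in the definition of a diagonal SMMV-algebra.
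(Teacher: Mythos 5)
Your argument is correct, and it is genuinely different from the paper's. The paper argues by cases on the congruence $\varphi$: if no $b<1$ in $B$ satisfies $(b,1)\in\varphi$, the ambient skew-diagonal algebra is already diagonal; otherwise it uses the subdirectness of the embedding together with the disjunction property of $\tau(\mathbf{A})$ and $\mathbf{F}_\tau(\mathbf{A})$ (available by Lemma \ref{necessary} since $(\mathbf{A},\tau)$ is subdirectly irreducible) to conclude that $F_\tau(A)=\{1\}$, so $\tau$ is injective, $\mathbf{A}\cong\tau(\mathbf{A})\cong\mathbf{B}$ is a chain, hence subdiagonal. You instead exhibit two SMMV-congruences with trivial intersection: the kernel of $\pi_1$ viewed as a homomorphism onto $(\mathbf{B},\mathrm{Id}_B)$, and the kernel of the natural map into the genuinely diagonal algebra $\mathbf{B}/(\varphi\cap B^2)\times\mathbf{C}/\varphi$ -- the latter being the key device, since the naive second projection admits no compatible state operator, whereas your $f$ retains just enough of the first coordinate to be an SMMV-homomorphism while still separating second coordinates. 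Meet-irreducibility of the identity congruence then splits the proof into the chain case and the embedding-into-a-diagonal case, both of which give subdiagonality. What your route buys: it bypasses the disjunction-property machinery entirely and never uses the hypothesis that the embedding into $\mathbf{B}\times\mathbf{C}/\varphi$ is subdirect, so it proves a formally stronger statement by a purely congruence-lattice argument. What the paper's route buys: it makes the structural dichotomy explicit (either the skew-diagonal algebra was diagonal to begin with, or $\tau$ is injective and $\mathbf{A}$ is a chain), which dovetails with the classification results of Sections 3 and 4.
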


\begin{proof}
If for all $b\in B$, $(b,1)\in\varphi $ implies $b=1$, then the map
$ b\mapsto b/\varphi $ is one-one and $\mathbf{B}$ is (isomorphic
to) a subalgebra of $\mathbf{C}/\varphi $. Hence,
$\mathbf{C}/\varphi $ is an MV-chain and $\mathbf{B}$ is a subchain
of $\mathbf{C}/\varphi $. It follows that $(\mathbf{B}\times
\mathbf{C}/\varphi ,\tau ^{*})$ is diagonal and $( \mathbf{A},\tau
)$ is subdiagonal. Now suppose that $(b,1)  \in \varphi $ for some $
b\in B\setminus \left\{ 1\right\} $. Since $\mathbf{A}$ is a
subdirect product of $\mathbf{B}\times \mathbf{C}/\varphi $, there
is $c\in C$ such that $(b,c/\varphi )\in A$. Moreover, $\tau
(b,c/\varphi )=(b,b/\varphi )=(b,1/\varphi )\in \tau (A)$.

Now if $(1,c/\varphi )\in A$, then $\tau (1,c/\varphi )=(1,1/\varphi
)$ and hence $(1,c/\varphi )\in F_{\tau }(A)$. Clearly,
$(1,c/\varphi )\vee (b,1/\varphi )=(1,1/\varphi )$, and since  $\tau
( \mathbf{A})$ and $\mathbf{F}_{\tau }(\mathbf{A})$ have the
disjunction property, we must have $c/\varphi =1/\varphi $. Now
$F_\tau(A)$ consists of all elements of the form $(1,c/\varphi )$,
and hence it is the singleton of $(1,1/\varphi )$. On the other
hand, $F_{\tau }(A)$ is the filter associated to the homomorphism
$\tau $, and hence $\tau $ is an embedding and $\mathbf{A}$ is
isomorphic to $\tau (\mathbf{A})$, which is in turn isomorphic to
$\mathbf{B}$ via the map $b\mapsto (b,b/\varphi )$. Since
$\mathbf{B}$ is linearly ordered, $\mathbf{A}$ is linearly ordered
and hence subdiagonal.
\end{proof}

We can conclude the proof of Theorem \ref{subdiagonal}.

\begin{proof}
Let $\mathbf{A}$ be subdirectly irreducible.
Since the variety of SMMV-algebras is generated by $\mathrm{D}
([0,1]_{MV})$, and since SMMV-algebras are congruence distributive,
by J\'{o}nsson's lemma $\mathbf{A}$ belongs to
$\mathsf{HSP}_{\mathsf{U}}(\mathsf{D}([0,1]_{MV}))$. Thus, $\mathbf{A}$ is a
homomorphic image of some
$\mathbf{B}\in\mathsf{SP}_{\mathsf{U}}(\mathsf{D}([0,1]_{MV}))$.

Now $\mathsf{D}([0,1]_{MV})$ is subdiagonal, and by Lemma~\ref{factor-cong} subdiagonal
SMMV-algebras are closed under $\mathsf{S}$ and $\mathsf{P}_{\mathsf{U}}$, so
$\mathbf{B}$ is subdiagonal as well. Then, since $\mathbf{A}$ is subdirectly
irreducible, Lemma~\ref{si-subdiag} applies, and we conclude that $\mathbf{A}$
is subdiagonal. Hence, every subdirectly irreducible SMMV-algebra
is subdiagonal.
\end{proof}

We end this section with an example showing that the class of
subdiagonal SMMV-algebras is not closed under homomorphic images.
Indeed, our example shows that not even the class of subdirectly
irreducible subdiagonal SMMV-algebras is closed under homomorphic
images. Consider the diagonal algebra $\mathbf{A} =
(\mathbf{C}_1\times\mathbf{C}_1,\tau_{C_1}).$ Here again
$\mathbf{C}_1$ stands for Chang's algebra. The set $F =\{1\}\times
Rad_1(\mathbf C_1)$ is a $\tau$-filter of $\mathbf{A}$. It is easy
to see that the congruence $\theta_F$ corresponding to $F$ is the
smallest nontrivial congruence on $\mathbf{A}$, so $\mathbf{A}$ is
subdirectly irreducible. It is not difficult to see that the
MV-reduct of the quotient algebra $\mathbf{A}/\theta_F$ is
isomorphic to $\mathbf{C}_1\times\mathbf{2}$, where $\mathbf{2}$ is
the two-element Boolean algebra. The operation $\tau$ on this
algebra is given by
$$
\tau(c,1) = \tau(c,0) = \begin{cases}
(c,1) & \text{ if } c\in Rad_1(\mathbf C_1)\\
(c,0) & \text{ if } c\notin Rad_1(\mathbf C_1).
\end{cases}
$$

\begin{lemma}
The algebra $\mathbf{A}/\theta_F$ is not subdiagonal.
\end{lemma}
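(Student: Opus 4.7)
The plan is to argue by contradiction: assume there exists an SMMV-embedding $\phi\colon\mathbf{A}/\theta_F\to(\mathbf{B}\times\mathbf{C},\sigma)$ into a diagonal SMMV-algebra, with MV-chains $\mathbf{B}\subseteq\mathbf{C}$ and $\sigma(b,c)=(b,b)$. The strategy is to pin down the images of two specific elements of $\mathbf{A}/\theta_F$ and derive a contradiction by evaluating their lattice join.

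First I would locate $\phi(1,0)$. The element $(1,0)$ is Boolean in the MV-reduct $\mathbf{C}_1\times\mathbf{2}$, so its image is Boolean in $\mathbf{B}\times\mathbf{C}$; since both factors are MV-chains, their only Boolean elements are $0$ and $1$, hence $\phi(1,0)\in\{0,1\}\times\{0,1\}$. Because $1\in Rad_1(\mathbf{C}_1)$, the explicit formula for $\tau$ yields $\tau(1,0)=(1,1)$, so $\sigma(\phi(1,0))=\phi(1,1)=(1,1)$; the shape of $\sigma$ forces the first coordinate of $\phi(1,0)$ to equal $1$. Injectivity of $\phi$ rules out $(1,1)$ and leaves $\phi(1,0)=(1,0)$.

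Next I would analyze $\phi(1-\epsilon,1)$, where $\epsilon$ denotes Chang's infinitesimal. Since $1-\epsilon\in Rad_1(\mathbf{C}_1)$, the element $(1-\epsilon,1)$ is a fixed point of $\tau$; writing $\phi(1-\epsilon,1)=(\beta,\gamma)$, the identity $(\beta,\beta)=\sigma(\beta,\gamma)=(\beta,\gamma)$ forces $\beta=\gamma$. Call this common value $b_0$. Since $(1-\epsilon,1)$ differs in the quotient from both $0$ and $1$, injectivity gives $0<b_0<1$.

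The contradiction now drops out from a single join computation. In $\mathbf{C}_1\times\mathbf{2}$ we have $(1,0)\vee(1-\epsilon,1)=(1,1)$, so applying $\phi$,
\[
(1,1)=\phi(1,0)\vee\phi(1-\epsilon,1)=(1,0)\vee(b_0,b_0)=(1,b_0),
\]
forcing $b_0=1$, in contradiction with $b_0<1$. The only non-routine step is spotting the right pair: the Boolean element $(1,0)\in F_\tau$ together with the non-Boolean $\tau$-fixed point $(1-\epsilon,1)$, whose join is the top of the quotient---a configuration that the diagonal constraint $\sigma(b,c)=(b,b)$ over an MV-chain cannot accommodate. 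I do not anticipate any significant obstacle beyond locating these two elements.
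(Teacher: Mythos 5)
Your proof is correct, but it takes a genuinely different route from the paper's. The paper assumes that if $\mathbf{A}/\theta_F$ were subdiagonal, then the ambient diagonal algebra could be taken of the form $\mathbf{D}\times\mathbf{E}$ or $\mathbf{E}\times\mathbf{D}$ with $\mathbf{C}_1\subseteq\mathbf{D}$ and $\mathbf{2}\subseteq\mathbf{E}$, and then compares the diagonal law $\tau(d,e)=(d,d)$ coordinatewise with the explicit operation on $\mathbf{C}_1\times\mathbf{2}$, which fails for every $c\notin\{0,1\}$; this presupposes that the embedding is compatible with the given product decomposition (up to swapping the factors), a point the paper leaves implicit. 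You instead take an arbitrary SMMV-embedding $\phi$ into an arbitrary diagonal algebra $(\mathbf{B}\times\mathbf{C},\sigma)$ and derive the contradiction intrinsically: Booleanness of $(1,0)$, the identity $\tau(1,0)=(1,1)$ and injectivity pin $\phi(1,0)$ down to $(1,0)$; the $\tau$-fixed point $(1-\epsilon,1)$ must land on the diagonal as $(b_0,b_0)$ with $0<b_0<1$; and the join $(1,0)\vee(1-\epsilon,1)=(1,1)$ then forces $b_0=1$. What your approach buys is independence from any claim about how the subalgebra sits inside the product, so it is arguably more watertight; what the paper's buys is brevity. A small remark: your Boolean-element step is not even needed in full strength, since knowing only that $\phi(1,0)=(1,c)$ with $c<1$ (from $\sigma(\phi(1,0))=(1,1)$ and injectivity) already yields $c\vee b_0=1$ in the chain $\mathbf{C}$ after applying $\phi$ to the join, which is the same contradiction.
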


\begin{proof}
If $A/\theta_F$ is subdiagonal then there exist linearly ordered MV-algebras
$\mathbf{D}$ and $\mathbf{E}$ such that $\mathbf{C}_1\subseteq\mathbf{D}$,
$\mathbf{2}\subseteq\mathbf{E}$ and either
$(\mathbf{D}\times\mathbf{E},\tau)$ is diagonal, or
$(\mathbf{E}\times\mathbf{D},\tau)$ is diagonal.
Now, if $(\mathbf{D}\times\mathbf{E},\tau)$ is diagonal,
we have $\tau(d,e) = (d,d)$ for all $(d,e)\in D\times E$. In particular,
$(c,z) = (c,c)$ for any $(c,z)\in C_1\times 2$. This fails for any
$c\notin\{0,1\}$. Then, if $(\mathbf{E}\times\mathbf{D},\tau)$ is diagonal,
we have $\tau(e,d) = (e,e)$ for all $(e,d)\in E\times D$. In particular,
$(z,c) = (z,z)$ for any $(z,c)\in 2\times C_1$. This again fails for any
$c\notin\{0,1\}$. Thus, $A/\theta_F$ is not subdiagonal.
\end{proof}


\section{Varieties of SMMV-algebras}

When studying a variety of universal algebras, an interesting
problem is the investigation of the lattice of its subvarieties. In
the case of SMMV-algebras, we have a unique atom (above the trivial
variety), namely, the variety $\mathcal{BI}$ of Boolean algebras
equipped with the identical endomorphism. This variety is generated
by the two element Boolean algebra equipped with the identity map.
Since this algebra is a subalgebra of any non-trivial SMMV-algebra,
$\mathcal{BI}$ is contained in any non-trivial variety of
SMMV-algebras.

Other varieties of SMMV-algebras are obtained as follows: let
$\mathcal{V}$ be a variety of MV-algebras,  let $\mathcal{V}_{SMMV}$
denote the class of algebras whose MV-reduct is in $\mathcal{V}$,
and $\mathcal{VI}$ denote the class of SMMV-algebras
$(\mathbf{A},\rm{Id}_A)$, where $\rm{Id}_A$ is the identity on $A$
and $\mathbf{A}\in \mathcal{V}$. The following problem arises: given
a variety $\mathcal{V}$ of MV-algebras, investigate the varieties of
SMMV-algebras between $\mathcal{VI}$ and $\mathcal{V}_{SMMV}$. To
begin with, besides $\mathcal{VI}$ and $\mathcal{V}_{SMMV}$, we will
discuss two more kinds of subvarieties, namely, the subvariety
generated by all SMMV-chains in $\mathcal{V}_{SMMV}$ (representable
SMMV-algebras) and the subvariety  generated by all algebras in
$\mathcal{V}_{SMMV}$ whose MV-reduct is a local MV-algebra. The
above classes will be denoted by $\mathcal{VR}$ and $\mathcal{VL}$
respectively. We consider $\mathcal{V}_{SMMV}$ and $\mathcal{VI}$
first. The following result is straightforward.

\begin{theorem}
{\rm (1)} $\mathcal{V}_{SMMV}$ is axiomatized over the axioms of
SMMV-algebras by the defining equations of $\mathcal{V}$.

{\rm (2)} $\mathcal{VI}$ is axiomatized over $\mathcal{V}_{SMMV}$ by
the identity $\tau(x)=x$.

{\rm (3)} $\mathcal{VI}\subseteq \mathcal{VR}$, and the inclusion is
proper if and only if $\mathcal{V}$ is not finitely generated.

{\rm (4)} The maps $\mathcal{V}\mapsto \mathcal{VI}$ and
$\mathcal{V}\mapsto\mathcal{V}_{SMMV}$ are   embeddings of the
lattice of MV-varieties into the lattice of SMMV-varieties.
\end{theorem}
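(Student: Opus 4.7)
The plan is to dispatch the four parts in sequence, since (1) and (2) are essentially definitional, (4) follows routinely from the earlier lemmas, and only (3) requires a genuine argument. For (1), the axioms of $\mathcal{V}_{SMMV}$ are by definition those of SMMV-algebras supplemented by identities forcing the MV-reduct to lie in $\mathcal{V}$; since the equations of $\mathcal{V}$ involve only the MV-operations, adjoining them to the SMMV-axioms cuts out exactly $\mathcal{V}_{SMMV}$. For (2), adding $\tau(x)=x$ to the axioms of $\mathcal{V}_{SMMV}$ forces $\tau$ to coincide with the identity map on the MV-reduct, yielding precisely the algebras $(\mathbf{A},\mathrm{Id}_A)$ with $\mathbf{A}\in\mathcal{V}$.

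For (3), I would first handle the easy inclusion $\mathcal{VI}\subseteq\mathcal{VR}$: every subdirectly irreducible member of $\mathcal{VI}$ has the form $(\mathbf{A},\mathrm{Id}_A)$ with $\mathbf{A}$ subdirectly irreducible (hence linearly ordered) in $\mathcal{V}$, so it is an SMMV-chain in $\mathcal{V}_{SMMV}$ and lies in $\mathcal{VR}$. For the equivalence, if $\mathcal{V}$ is finitely generated then by Komori's theorem $\mathcal{V}$ is locally finite, every chain in $\mathcal{V}$ is a finite $\mathbf{S}_d$, and the unit $\ell$-group $(\mathbb{Z},d)$ admits only the identity endomorphism fixing the unit; hence every SMMV-chain in $\mathcal{V}_{SMMV}$ is of the form $(\mathbf{S}_d,\mathrm{Id})$ and belongs to $\mathcal{VI}$, giving $\mathcal{VR}\subseteq\mathcal{VI}$. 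If $\mathcal{V}$ is not finitely generated, Komori's theorem guarantees that either $\mathcal{V}=\mathcal{MV}$ or $\mathcal{V}$ is generated by a Komori set containing some $\mathbf{S}_n^\omega$; in either case $\mathbf{S}_n^\omega\in\mathcal{V}$ for some $n\geq 1$, and the composition of the first-coordinate projection $\mathbf{S}_n^\omega\twoheadrightarrow\mathbf{S}_n$ with the embedding $\mathbf{S}_n\hookrightarrow\mathbf{S}_n^\omega$ is a non-identity idempotent endomorphism $\tau$. Then $(\mathbf{S}_n^\omega,\tau)$ is an SMMV-chain in $\mathcal{V}_{SMMV}\setminus\mathcal{VI}$, witnessing the proper inclusion.

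For (4), injectivity of both maps follows by recovering $\mathcal{V}$ as the class of MV-reducts of either $\mathcal{VI}$ or $\mathcal{V}_{SMMV}$. Meet preservation is immediate from the set-theoretic descriptions. For join preservation of $\mathcal{V}\mapsto\mathcal{V}_{SMMV}$, Lemma~\ref{wd}(2) and Lemma~\ref{main}(4) together give
\[
(\mathcal{V}\vee\mathcal{W})_{SMMV}=\mathsf{ISD}(\mathcal{V}\vee\mathcal{W})=\mathsf{VD}(\mathcal{V}\cup\mathcal{W})\subseteq\mathsf{V}(\mathcal{V}_{SMMV}\cup\mathcal{W}_{SMMV})=\mathcal{V}_{SMMV}\vee\mathcal{W}_{SMMV},
\]
with the reverse inclusion obvious. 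Join preservation of $\mathcal{V}\mapsto\mathcal{VI}$ follows because the axiomatization in (2) identifies the lattice of subvarieties of the ``identity'' SMMV-variety (defined by $\tau(x)=x$) with the lattice of MV-varieties via $\mathbf{A}\leftrightarrow(\mathbf{A},\mathrm{Id}_A)$, so the correspondence is a lattice isomorphism onto its image.

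The main obstacle is the non-trivial direction of (3), which relies on Komori's structural dichotomy for infinitely generated MV-varieties together with the explicit construction of a non-identity state morphism on $\mathbf{S}_n^\omega$ producing an SMMV-chain that distinguishes $\mathcal{VR}$ from $\mathcal{VI}$.
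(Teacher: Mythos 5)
Your proposal is correct and follows essentially the same route as the paper: (1), (2) are definitional, the proper inclusion in (3) is witnessed by a Komori chain equipped with the radical-collapsing retraction (the paper's witness, Chang's algebra $\mathbf{C}_1$ with $\tau$ sending infinitesimals to $0$ and the rest to $1$, is exactly your construction for $n=1$), and (4) is handled via the $\mathsf{D}$-construction lemmas, which the paper dismisses as ``almost immediate''. Your write-up merely makes Komori's classification explicit and fills in the join-preservation details that the paper leaves to the reader.
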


\begin{proof}
Claims (1) and (2) are immediate.

As regards to (3), since subdirectly irreducible algebras of type
$\mathcal{I}$ are linearly ordered we have that
$\mathcal{VI}\subseteq \mathcal{VR}$. If $\mathcal{V}$ is finitely
generated, then $\mathcal{VI}= \mathcal{VR}$, because every MV-chain
in $\mathcal{V}$ is finite, and its only endomorphism is the
identity. Finally, if $\mathcal{V}$ is not finitely generated, then
it contains Chang's algebra, $\mathbf{C}_1$. Let $\tau$ be defined for
all $x \in C_1$, by $\tau(x)=0$ if $x$ is infinitesimal and
$\tau(x)=1$ otherwise. Then $(\mathbf{C}_1,\tau) \in
\mathcal{VR}\setminus \mathcal {VI}$, and the inclusion
$\mathcal{VI}\subseteq \mathcal{VR}$ is proper.

Finally, claim (4) is almost immediate (using Theorem \ref{A}).
\end{proof}

We now concentrate ourselves on $\mathcal{VR}$.

\begin{theorem}
Representable SMMV-algebras constitute a proper subvariety of the
variety of SMMV-algebras, which is characterized by the equation
\medskip

\noindent $(lin_\tau)\hspace{3.5cm}\tau(x) \vee (x\rightarrow
(\tau(y) \leftrightarrow y))=1$.
\end{theorem}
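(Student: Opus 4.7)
The plan is to establish the biconditional ``representable iff $(lin_\tau)$'', and then to exhibit an SMMV-algebra in which $(lin_\tau)$ fails, witnessing the proper inclusion.

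First, I verify that $(lin_\tau)$ holds in every SMMV-chain, whence by preservation under subdirect products it holds in every representable SMMV-algebra. In a chain the lattice join is the maximum, so $(lin_\tau)$ amounts to the alternative ``$\tau(x)=1$ or $x\leq \tau(y)\leftrightarrow y$''. Since $\tau$ is an MV-homomorphism, $\tau(\tau(y)\leftrightarrow y)=\tau(y)\leftrightarrow\tau(y)=1$, so $\tau(y)\leftrightarrow y\in F_\tau(\mathbf{A})$. In the chain either $x\leq\tau(y)\leftrightarrow y$ (the second disjunct) or $\tau(y)\leftrightarrow y\leq x$; in the latter case $x$ lies above a filter element, so $x\in F_\tau(\mathbf{A})$, i.e., $\tau(x)=1$.

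For the converse, since $(lin_\tau)$ is equational and preserved by quotients, it suffices to show that every subdirectly irreducible SMMV-algebra satisfying $(lin_\tau)$ is a chain. Let $(\mathbf{A},\tau)$ be such and suppose for contradiction that $a,b\in A$ are incomparable. By Theorem~\ref{subdiagonal} I embed $\mathbf{A}$ subdiagonally in $\mathbf{B}\times\mathbf{C}$ with chains $\mathbf{B}\subseteq\mathbf{C}$ and $\tau(p,q)=(p,p)$. Since $\tau(\mathbf{A})$ is a chain (Lemma~\ref{necessary}) and monads are linearly ordered (Theorem~\ref{representation}), incomparability forces $\tau(a)\ne\tau(b)$; WLOG $\tau(a)<\tau(b)$, so in the representation $a=(p_a,q_a)$, $b=(p_b,q_b)$ with $p_a<p_b$ and necessarily $q_a>q_b$. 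Set $x:=b\to a=(p_b\to p_a,\,1)$ and $y:=a\to b=(1,\,q_a\to q_b)$; these are incomparable, since each has exactly one coordinate equal to $1$. Moreover $\tau(y)=\tau(a)\to\tau(b)=1$, so $y\in F_\tau(\mathbf{A})$; $\tau(x)=\tau(b)\to\tau(a)<1$, so $x\notin F_\tau(\mathbf{A})$; and $\tau(x\to y)=\tau(x)\to 1=1$ puts $x\to y\in F_\tau(\mathbf{A})$. Applying the disjunction property (Lemma~\ref{necessary}(3)) to $(lin_\tau)$ at $(x,y)$, the equation forces ``$\tau(x)=1$ or $x\leq y$'', both of which fail; a contradiction. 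Hence $\mathbf{A}$ is a chain.

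Proper inclusion is witnessed by $D([0,1]_{MV})$, which is a subdirectly irreducible type-$\mathcal{D}$ SMMV-algebra containing the incomparable pair $(0,1),(1,0)$; by the argument above $(lin_\tau)$ fails in it. The main obstacle is the converse, and in particular the identification of the witness pair. Once one notices that for incomparable $a,b$ with $\tau(a)<\tau(b)$ the elements $b\to a$ and $a\to b$ have opposite $F_\tau$-status and sit as ``off-diagonal opposites'' in the subdiagonal product, the disjunction property applied to $(lin_\tau)$ collapses everything into a one-line contradiction.
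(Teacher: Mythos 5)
Your proof is correct, and its overall skeleton (chains satisfy $(lin_\tau)$; a subdirectly irreducible algebra satisfying $(lin_\tau)$ must be a chain; properness via the diagonal algebra $D([0,1]_{MV})$) matches the paper's, but the crucial middle step is argued by a genuinely different route. The paper stays entirely inside the Section~7 toolkit: from $(lin_\tau)$ and the disjunction property it first deduces that every $z$ with $\tau(z)<1$ is a lower bound of $F_\tau(A)$ (using that $F_\tau(A)$ is exactly the set of elements $\tau(u)\leftrightarrow u$), and then kills an incomparable pair $x,y$ by a three-case analysis on whether $x\rightarrow y$ and $y\rightarrow x$ lie in $F_\tau(A)$, using linearity of $\mathbf{F}_\tau(\mathbf{A})$, injectivity of $\tau$ when $F_\tau(A)=\{1\}$, and the MV quasi-identity ``$y\rightarrow x\leq x\rightarrow y$ implies $x\leq y$''. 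You instead import Theorem~\ref{subdiagonal} (itself proved via the generation theorem and J\'onsson's lemma) to place the incomparable pair $a,b$ in a subdiagonal product of chains, read off the witness pair $x=b\rightarrow a$, $y=a\rightarrow b$ coordinatewise, and obtain a one-shot contradiction from the disjunction property of Lemma~\ref{necessary}(3) applied to the instance $\tau(x)\vee(x\rightarrow y)=1$ of $(lin_\tau)$; the computations there are all correct (and your appeal to Theorem~\ref{representation} for $\tau(a)\neq\tau(b)$ is actually redundant, since in the subdiagonal coordinates $p_a=p_b$ already forces comparability). The trade-off: your argument is shorter once subdiagonality is available, but it makes the equational characterization depend on the heaviest structural result of the paper, whereas the paper's proof is elementary and self-contained; also, your phrase ``by preservation under subdirect products'' for the easy direction should really be ``equations are preserved under $\mathsf{H}$, $\mathsf{S}$, $\mathsf{P}$, hence hold throughout the variety generated by the chains'', a harmless imprecision.
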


\begin{proof}
We have to prove that a subdirectly irreducible SMMV-algebra
$(\mathbf{A},\tau)$ satisfies $(lin_\tau)$ iff it is linearly
ordered. Thus, let $(\mathbf{A},\tau)$ be a subdirectly irreducible
SMMV-algebra.

Suppose first that $(\mathbf{A},\tau)$ satisfies $(lin_\tau)$. We
start from the following observation. Let $z,u \in A$. Then $z
\rightarrow (\tau(u) \leftrightarrow u)\in {F}_\tau({A})$. Since
$\tau(\mathbf{A})$ and $\mathbf{F}_\tau(\mathbf{A})$ have the
disjunction property, we have that either $\tau(z)=1$  or $z \leq
\tau(u) \leftrightarrow u$. Now every element of  $u \in  F_\tau(A)$
is equal to $\tau(u) \leftrightarrow u$, and vice versa every element
of the form $\tau(u) \leftrightarrow u$ is in $F_\tau(A)$. It
follows that if $\tau(z)<1$, then $z$ is a lower bound of
$F_\tau(A)$.

Now assume, by way of contradiction, that $x,y\in A$ are
incomparable with respect to the order. We distinguish three cases.

(i) If  $x \rightarrow y\in F_\tau(A)$ and $y \rightarrow x \in
F_\tau (A)$, then since $\mathbf F_\tau (\mathbf A)$ is linearly
ordered and $(x \rightarrow y)\vee (y \rightarrow x)=1$, we must
have either $x \rightarrow y=1$ or $y \rightarrow x=1$, a
contradiction.

(ii) If $x \rightarrow y \notin F_\tau(A)$ and $y \rightarrow
x\notin F_\tau (A)$, then they are both lower bounds of $ F_\tau
(A)$, and hence $1=(x \rightarrow y)\vee (y\rightarrow x)$ is a
lower bound of $ F_\tau ( A)$. But then $\mathbf{A}$ would be
isomorphic to $\tau(\mathbf{A})$, and hence it would be linearly
ordered, a contradiction.

(iii) Finally, suppose $x \rightarrow y \in F_\tau(A)$ and $y
\rightarrow x \notin F_\tau(A)$ (or vice versa). Then $y \rightarrow
x$ is a lower bound of $F_\tau (A)$, and hence $y \rightarrow x \leq
x \rightarrow y$. But in any MV-algebra this is the case iff $x \leq
y$, and again a contradiction has been obtained.

Hence, $(\mathbf A,\tau)$ is linearly ordered.
Conversely, if $(\mathbf{A},\tau)$ is linearly ordered, then for all
$x,z$ such that $\tau(x)<1$ and $\tau(z)=1$, we cannot have $z<x$,
and hence we must have $x \leq z$. Taking $z=\tau(y) \leftrightarrow
y$, we obtain that for all $x$ either $\tau(x)=1$ or $x \leq z$, and
$(lin_\tau)$ holds.

Finally, representable SMMV-algebras constitute a proper subvariety
of the variety of SMMV-algebras, because any subdirectly irreducible
SMMV-algebra of type $\mathcal D$ is not linearly ordered.
\end{proof}

\begin{remark} {\rm
According to \cite[Prop. 3.6]{DDL1}, if $(\mathbf A,\tau)$ is an
SMV-algebra such that $\mathbf{A}$ is a chain, then $(\mathbf
A,\tau)$ is an SMMV-algebra.  Hence, the class of all representable
SMV-algebras satisfies $(lin_\tau).$ We do not know whether every
subdirectly irreducible SMV-algebra satisfying $(lin_\tau)$ has a
linearly ordered MV-reduct.}
\end{remark}

\begin{theorem}
$\mathcal{VR}\subseteq \mathcal{VL}$, and the inclusion is proper if
and only if $\mathcal{V}$ is not finitely generated.
\end{theorem}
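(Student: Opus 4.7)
The inclusion $\mathcal{VR}\subseteq \mathcal{VL}$ is the easy half. Every SMMV-chain has a chain MV-reduct, and a chain MV-algebra is local (its unique maximal filter is $A\setminus\{0\}$). Hence each generator of $\mathcal{VR}$ already lies in the generating class of $\mathcal{VL}$.

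For the ``only if'' direction of the equivalence, suppose $\mathcal{V}$ is finitely generated. Then the subdirectly irreducible members of $\mathcal{V}$ are finite simple chains and every member of $\mathcal{V}$ is semisimple. A local semisimple MV-algebra has $\{0\}$ as its radical, hence as its unique maximal ideal, so it is simple and thus a chain. Therefore any $(\mathbf{A},\tau)\in \mathcal{V}_{SMMV}$ whose MV-reduct is local is already an SMMV-chain, and so lies in the generating class of $\mathcal{VR}$. This gives $\mathcal{VL}\subseteq \mathcal{VR}$ and hence equality.

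For the ``if'' direction, assume $\mathcal{V}$ is not finitely generated, so that $\mathbf{C}_1\in \mathcal{V}$. My witness is the algebra $(\mathbf{A},\tau)$ of Example~\ref{ex:1}: its MV-reduct is a subalgebra of $\mathbf{C}_1\times \mathbf{C}_1\in \mathcal{V}$, and it is local with unique maximal filter $Rad_1(\mathbf{C}_1)\times Rad_1(\mathbf{C}_1)$, so $(\mathbf{A},\tau)\in \mathcal{VL}$. To show $(\mathbf{A},\tau)\notin \mathcal{VR}$ I apply J\'{o}nsson's lemma (valid since SMMV is congruence distributive): every subdirectly irreducible member of $\mathcal{VR}$ belongs to $\mathsf{HSP}_{\mathsf{U}}$ of SMMV-chains. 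Since ultraproducts, subalgebras and homomorphic images of linearly ordered SMMV-algebras are again linearly ordered, each subdirectly irreducible member of $\mathcal{VR}$ is a chain. But Example~\ref{ex:1} records that $(\mathbf{A},\tau)$ is subdirectly irreducible and \emph{not} linearly ordered, a contradiction.

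The main obstacle is the very last step: the J\'{o}nsson reduction relies on the closure of the class of SMMV-chains under $\mathsf{H}$, $\mathsf{S}$ and $\mathsf{P}_{\mathsf{U}}$, each of which is routine in isolation but must be verified in the SMMV signature, using that $\tau$ descends to quotients by $\tau$-filters and that a linearly ordered MV-algebra remains linearly ordered under such quotients.
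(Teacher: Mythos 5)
Your argument is correct and follows the paper's proof in all essentials: the same three steps (SMMV-chains have local MV-reducts; in a finitely generated variety every local algebra in the generating class of $\mathcal{VL}$ is already a chain; Example~\ref{ex:1} witnesses properness once $\mathbf{C}_1\in\mathcal{V}$), with the same witness algebra. You differ only in how you justify two intermediate facts, and both of your justifications are sound: for the finitely generated case you argue via semisimplicity (local plus semisimple means the radical, which is the unique maximal ideal, equals $\{0\}$, so the algebra is simple, hence a chain), where the paper just asserts that a local finite MV-algebra is linearly ordered; and to exclude the algebra of Example~\ref{ex:1} from $\mathcal{VR}$ you apply J\'onsson's lemma directly to the generating class of SMMV-chains (which is indeed closed under $\mathsf{H}$, $\mathsf{S}$ and $\mathsf{P}_{\mathsf{U}}$, SMMV being congruence distributive), where the paper instead leans on its preceding theorem that subdirectly irreducible representable SMMV-algebras, i.e.\ those satisfying $(lin_\tau)$, are linearly ordered. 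One small slip worth fixing: your parenthetical reason that a chain MV-algebra is local is false as stated, since $A\setminus\{0\}$ is in general not a filter (already in $[0,1]_{MV}$ or in $\mathbf{C}_1$ one has $x\odot x=0$ for small $x>0$); the correct reason is that in a linearly ordered MV-algebra the proper filters (ideals) are totally ordered by inclusion, so their union is the unique maximal one. This does not affect your proof, since the fact itself is true and is all you use.
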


\begin{proof}
Since every linearly ordered SMMV-algebra is local, the inclusion
follows. Moreover, every local and finite MV-algebra is linearly
ordered, and hence for finitely generated MV-varieties the opposite
inclusion also holds. On the other hand, if $\mathcal V$ is not
finitely generated, then it contains Chang's algebra $\mathbf{C}_1$,
and the subalgebra of $D(\mathbf{C}_1)$ described in Example
\ref{ex:1}, is a local subdirectly irreducible SMMV-algebra in
$\mathcal V_{SMMV}$ which is not linearly ordered. Hence, the
inclusion $\mathcal{VR}\subseteq \mathcal{VL}$ is proper.
\end{proof}

Next, we discuss varieties of the form $\mathcal{VL}$.

\begin{theorem}\label{VL}
{\rm (1)} The variety $\mathcal{VL}$ is axiomatized over
$\mathcal{V}_{SMMV}$ by the equation \smallskip

\noindent $(loc_\tau)$\hspace{3.5cm}$\neg(\tau(x) \leftrightarrow
x)\leq (\tau(x) \leftrightarrow x).$\smallskip

\noindent{\rm (2)} For \emph{any} non-trivial variety $\mathcal{V}$
of MV-algebras, $\mathcal{VL}$ is a proper subvariety of
$\mathcal{V}_{SMMV}$.
\end{theorem}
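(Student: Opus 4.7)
The plan is to establish (1) through the subdirect classification (Theorem~\ref{classif}) in both directions, and then to derive (2) by exhibiting a concrete algebra that falls outside $\mathcal{VL}$.

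For the inclusion $\mathcal{VL}\models(loc_\tau)$, since $(loc_\tau)$ is an equation it suffices to check it on generators, namely on $(\mathbf A,\tau)\in\mathcal V_{SMMV}$ with $\mathbf A$ a local MV-algebra. Given $x\in A$, put $d=\tau(x)\leftrightarrow x$; since $\tau$ is an endomorphism and $\tau\circ\tau=\tau$, we have $\tau(d)=\tau(x)\leftrightarrow\tau(x)=1$, so $d\in F_\tau(A)$. As $\tau(0)=0\neq 1$, the filter $F_\tau(A)$ is proper, and in a local MV-algebra every proper filter is contained in the unique maximal filter $Rad_1(\mathbf A)$; hence $d\in Rad_1(\mathbf A)$, i.e., $\neg d\in Rad(\mathbf A)$. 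Using the standard characterization that $n\cdot e\leq\neg e$ for all $n$ whenever $e\in Rad(\mathbf A)$, applied to $e=\neg d$ with $n=1$, we obtain $\neg d\leq d$, which is exactly $(loc_\tau)$.

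For the reverse inclusion I work on subdirectly irreducible models: since $(loc_\tau)$ is preserved under subdirect representations, it is enough to show that every SI $(\mathbf A,\tau)\in\mathcal V_{SMMV}$ satisfying $(loc_\tau)$ is a generator of $\mathcal{VL}$, i.e., has local MV-reduct. By Theorem~\ref{classif} an SI SMMV-algebra is of exactly one of the types $\mathcal I$, $\mathcal L$, $\mathcal D$; types $\mathcal I$ and $\mathcal L$ are both local (an SI MV-chain for $\mathcal I$, local by definition for $\mathcal L$), so they already belong to the generating class of $\mathcal{VL}$. It remains to rule out type $\mathcal D$: if $\mathbf A=\mathbf B\times\mathbf C$ with $\mathbf C$ a nontrivial MV-chain and $\tau(b,c)=(b,b)$, take $x=(1,0)$; then $\tau(x)\leftrightarrow x=(1,0)$ and $\neg(\tau(x)\leftrightarrow x)=(0,1)\not\leq(1,0)$ in the product order, because the second coordinate would require $1\leq 0$ in $\mathbf C$. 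Hence $(loc_\tau)$ fails in every type $\mathcal D$ SI algebra, completing the proof of (1).

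Part (2) now follows at once: for any nontrivial variety $\mathcal V$, choose a nontrivial $\mathbf A\in\mathcal V$; the diagonal algebra $D(\mathbf A)=(\mathbf A\times\mathbf A,\tau_A)$ lies in $\mathcal V_{SMMV}$, and the computation above with $x=(1,0)$ shows $(loc_\tau)$ fails in $D(\mathbf A)$, whence by (1), $D(\mathbf A)\in\mathcal V_{SMMV}\setminus\mathcal{VL}$. The main obstacle is the first direction, specifically securing $F_\tau(A)\subseteq Rad_1(\mathbf A)$ from locality and then translating $\neg d\in Rad(\mathbf A)$ into the pointwise inequality $\neg d\leq d$; both are standard facts about the radical and maximal filters of MV-algebras, but they need to be cited carefully. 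Everything else is bookkeeping combined with the classification theorem.
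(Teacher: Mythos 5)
Your proposal is correct and takes essentially the same route as the paper: the forward inclusion via $\tau(x)\leftrightarrow x\in F_\tau(A)\subseteq$ the unique maximal filter together with the fact that elements of the radical satisfy $e\leq\neg e$ (which the paper proves from scratch as Lemma~\ref{M} instead of citing the standard radical characterization), and the converse plus part (2) via Theorem~\ref{classif} and the same witness $x=(1,0)$ in type-$\mathcal{D}$ and diagonal algebras. The only cosmetic differences are that you check $(loc_\tau)$ on all local members of $\mathcal{V}_{SMMV}$ (not just the subdirectly irreducible ones) and outsource the radical inequality to a standard cited fact, neither of which changes the argument.
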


\begin{proof}
We start from the following lemma:

\begin{lemma}\label{M}
Let $\mathbf{A}$ be a local MV-algebra and $M$ be its only maximal
filter. Then for every $m \in M$, $\neg m \leq m$.
\end{lemma}

\begin{proof}
Since $m^2 \in M$, $\neg ( m^2) \notin M$. Since $M$ is the only
maximal filter, then $\neg (m^2)$ generates a degenerate filter, and
hence there is a natural number $n$ such that $(\neg m^2)^n=0$. Now
let us decompose $\mathbf{A}$ into a subdirect product $\prod_{i \in
I}\mathbf{A}_i$ of MV-chains. If for some index $i$ we had $m_i<
\neg m_i$, then we would get $m_i^2=0$. But this would imply $\neg
(m_i^2)=1$, and hence $(\neg (m^2))^n>0$ for every $n$, a
contradiction.
\end{proof}

{\it We continue the proof} of Theorem \ref{VL}. In order to prove
claim (1), it suffices to prove that an SMMV-algebra is subdirectly
irreducible iff it satisfies $(loc_{\tau })$. Now in every
SMMV-algebra we have $\tau(\tau(x)\leftrightarrow x)=1$, and hence
$\tau(x) \leftrightarrow x \in F_\tau(A)\subseteq M$, where $M$
denotes the unique maximal filter of $\mathbf{A}$. Then Lemma
\ref{M} implies that every subdirectly irreducible local
SMMV-algebra satisfies $(loc_\tau)$. Before proving the converse, we
prove claim (2).

Let $\mathbf{A}$ be a non trivial chain in $\mathcal{V}$. Then
$(loc_\tau)$ is invalidated in $D(\mathbf{A})$, taking $x=(1,0)$. We
have $\tau(x)=(1,1)$, $\tau(x) \leftrightarrow x=(1,0)$, and $$\neg
(\tau(x) \leftrightarrow x)=(0,1)\not\leq
(1,0)=\neg(\tau(x)\leftrightarrow x).$$ This settles the claim.

In order to prove the opposite direction of claim (1), note that
every subdirectly irreducible SMMV-algebra is either of type
$\mathcal{I}$ (in which case it is local) or of type $\mathcal{L}$
(in which case, once again it is local) or of type $\mathcal{D}$. In
the last case the proof of (2) shows that it does not satisfy
$(loc_{\tau }$). Hence if a subdirectly irreducible SMMV-algebra
satisfies $(loc_{\tau }$) it is local.
\end{proof}

Another interesting problem in the study of the lattice of
subvarieties of a variety is the investigation of covers of a given
subvariety (if any). For instance, one may wonder what are the
covers of $\mathcal{BI}$. A partial answer to this question is
provided by the following theorem:

\begin{theorem}\label{cover}
Let $\mathcal{V}$ and $\mathcal{W}$ be varieties of MV-algebras. If
$\mathcal{W}$ is a cover of $\mathcal{V}$, then $\mathcal{WI}$ is a
cover of $\mathcal{VI}$. Hence, if $\mathcal{W}$ is generated either
by  $\mathbf{S}_p$ for some prime number $p$ or by Chang's algebra
$\mathbf{C}_1$, then $\mathcal{WI}$ is a cover of $\mathcal{BI}$.
\end{theorem}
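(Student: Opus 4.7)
The plan is to exploit the lattice embedding $\mathcal{V}\mapsto \mathcal{VI}$ from part (4) of the preceding theorem: the strict inclusion $\mathcal{V}\subsetneq \mathcal{W}$ immediately gives $\mathcal{VI}\subsetneq \mathcal{WI}$, so the content lies entirely in showing that no SMMV-variety sits strictly between them. I would argue this by proving a stronger structural fact: every subvariety $\mathcal{U}$ of $\mathcal{WI}$ is of the form $\mathcal{UI}_0$ for some MV-subvariety $\mathcal{U}_0$ of $\mathcal{W}$.

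Concretely, suppose $\mathcal{VI}\subsetneq \mathcal{U}\subseteq \mathcal{WI}$ for some SMMV-variety $\mathcal{U}$. Every member of $\mathcal{WI}$ satisfies $\tau(x)=x$, so this equation also holds in every member of $\mathcal{U}$. Let $\mathcal{U}_0$ be the class of MV-reducts of algebras in $\mathcal{U}$. I would check that $\mathcal{U}_0$ is a variety of MV-algebras by verifying closure under $\mathsf{H}$, $\mathsf{S}$, $\mathsf{P}$; this is routine because, when $\tau=\mathrm{Id}$, subalgebras, homomorphic images and direct products in the SMMV-signature correspond exactly to the same constructions on MV-reducts, and conversely any such construction performed on MV-reducts lifts to the identity-state morphism algebras. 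Thus $\mathcal{U}=\{(\mathbf{A},\mathrm{Id}_A):\mathbf{A}\in\mathcal{U}_0\}=\mathcal{U}_0\mathcal{I}$.

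Now $\mathcal{V}\subseteq \mathcal{U}_0\subseteq \mathcal{W}$. If $\mathcal{U}_0=\mathcal{V}$, then $\mathcal{U}=\mathcal{VI}$, contradicting $\mathcal{VI}\subsetneq \mathcal{U}$. Hence $\mathcal{V}\subsetneq \mathcal{U}_0\subseteq \mathcal{W}$, and since $\mathcal{W}$ is a cover of $\mathcal{V}$, we conclude $\mathcal{U}_0=\mathcal{W}$, whence $\mathcal{U}=\mathcal{WI}$. This proves that $\mathcal{WI}$ is a cover of $\mathcal{VI}$.

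For the second assertion, take $\mathcal{V}$ to be the variety of Boolean algebras, so that $\mathcal{VI}=\mathcal{BI}$, and invoke Komori's classification of subvarieties of MV-algebras: the atoms above the Boolean variety in the lattice of MV-varieties are precisely the varieties generated by $\mathbf{S}_p$ for a prime $p$ and the variety generated by Chang's algebra $\mathbf{C}_1$. Applying the first part of the theorem to each such cover $\mathcal{W}$ yields the claim. The main (minor) obstacle is the verification that $\mathcal{U}_0$ is a variety, which I expect to amount to a straightforward bookkeeping argument once the identification $\mathcal{U}=\mathcal{U}_0\mathcal{I}$ is made precise; no genuinely new ideas should be required.
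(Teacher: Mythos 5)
Your proof is correct and takes essentially the same route as the paper: since $\tau$ is forced to be the identity on every member of $\mathcal{WI}$, the subvariety question reduces to the MV level, where the cover hypothesis finishes the argument. The paper phrases this as a generation claim (any $(\mathbf{A},\mathrm{Id}_A)\in\mathcal{WI}\setminus\mathcal{VI}$ together with $\mathcal{VI}$ generates $\mathcal{WI}$), while you classify the intermediate SMMV-varieties via their classes of MV-reducts; this is the same reduction, with your version merely making explicit a step the paper leaves implicit.
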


\begin{proof}
If $(\mathbf{A},\tau)\in \mathcal{WI}\setminus \mathcal{VI}$, then
since $\tau$ is forced to be the identity, we must have
$\mathbf{A}\in \mathcal{W}\setminus \mathcal{V}$, and since
$\mathcal{W}$ is a cover of $\mathcal{V}$, the variety generated by
$\{\mathbf{A}\}\cup \mathcal{V}$ is $\mathcal{W}$, and hence the
variety generated by $\{(\mathbf{A},\tau)\}\cup \mathcal{VI}$ is
$\mathcal{WI}$, and the claim follows.
\end{proof}

\begin{remark}{\rm
Varieties $\mathcal{VI}$, where $\mathcal{V}$ is a cover of the
Boolean variety $\mathcal{B}$, do not exhaust the covers of
$\mathcal{BI}$. Another cover is $\mathcal{B}_{SMMV}$. Indeed, any
subdirectly irreducible SMMV-algebra $(\mathbf A, \tau)$ in
$\mathcal{B}_{SMMV}\setminus \mathcal{BI}$ must have a Boolean
reduct and cannot be of type $\mathcal I$ or $\mathcal{L}$,
otherwise $\tau$ would be identical. Hence, it must be of type
$\mathcal{D}$ and $D(\mathbf S_1)$ is a subalgebra of $(\mathbf
A,\tau)$. Therefore, $(\mathbf A, \tau)$ generates the whole variety
$\mathcal B_{SMMV}$.}
\end{remark}

Theorem \ref{cover} suggests the following problem:

\vspace{2mm} \noindent \textbf{Problem 2}. Let $\mathcal{V}$ be a
variety of MV-algebras and let $\mathcal{V}^{\prime }$ be a cover of
$\mathcal{V}$. Is it true that $ \mathcal{V}_{SMMV}^{\prime }$ is a
cover of $\mathcal{V}_{SMMV}$? Or, equivalently, is
$\mathsf{VD}(\mathcal V)$ a cover of $\mathsf{VD}(\mathcal
V')$?\medskip

The answer to these questions is no, in general. Here is a sample
of counterexamples.

\medskip

(1) Let $\mathcal{V}$ be the variety of Boolean algebras and
$\mathcal{V}^{\prime }$ be the variety generated by Chang's algebra.
Then $\mathcal{V}^{\prime }$ is a cover of $\mathcal{V}$. However,
there is an intermediate variety between $\mathcal{V}_{SMMV}$ and
$\mathcal{V}_{SMMV}^{\prime }$, namely, the subvariety
$\mathcal{V}_{SMMV}^{\prime \prime}$ of
$\mathcal{V}_{SMMV}^{\prime}$ axiomatized by the equation
\smallskip

\noindent $(*)$\quad \quad \quad \quad \quad \quad \quad \quad \quad
\quad \quad $\tau (x)\vee \tau (\neg x)=1$.
\smallskip

Indeed, clearly the equation $(*)$ holds in any Boolean
SMMV-algebra. Moreover, there is an algebra in
$\mathcal{V}_{SMMV}^{\prime }$ which satisfies $(*)$ and its reduct
is not a Boolean algebra, namely, Chang's algebra $\mathbf{C}_1$ with
$\tau $ defined by $\tau (x)=0$ if $x\in Rad(\mathbf{C}_1)$ and $\tau
(x)=1$ otherwise.

Finally, there is an algebra in $\mathcal{V}_{SMMV}^{\prime }$
which does not satisfy $(*)$, namely, the diagonalization,
$D(\mathbf{C}_1)$, of Chang's algebra. Indeed, if $c\in
Rad(\mathbf{C}_1)\setminus \left\{ 0\right\} $, then
$\tau(c,c)=(c,c)$ and $\tau (\neg (c,c))=(\neg c,\neg c)$. Hence,
$\tau(c,c)\vee \tau (\neg (c,c))=(\neg c,\neg c)<1$.

(2) Let  $\mathcal V=\mathsf{V}(\mathbf S_{i_1},\ldots, \mathbf
S_{i_n})$ and $\mathcal V'= \mathsf{V}(\mathbf S_{i_1},\ldots,
\mathbf S_{i_n}, \mathbf{C}_1)$ for some integers $1\le i_1 <\cdots
<i_n.$ Then $\mathcal V'$ is a cover variety of $\mathcal V.$ Define
$\mathcal V_{{SMMV}}''$ as the class of all $(\mathbf A,\tau) \in
\mathcal V'$ such that $\tau(\mathbf A) \in \mathcal V.$

Then $\mathcal V_{{SMMV}} \subseteq \mathcal V''_{{SMMV}} \subseteq
\mathcal V'.$  But if $\tau$ is as in (1), then $(\mathbf{C}_1,
\tau)\in \mathcal V_{{SMMV}}'' \setminus \mathcal V_{{SMMV}}$ and
$D(\mathbf{C}_1) \in \mathcal V_{{SMMV}}' \setminus \mathcal
V''_{{SMMV}}.$

(3)  Define on $\mathbf C_n\times \mathbf C_n$ a map
$\tau_n(i,j)=(i,0)$ for all $(i,j)\in \mathbf C_n,$ then $(\mathbf
C_n,\tau_n)$ is an SMMV-algebra.

Let $1=i_1 <\cdots < i_n$ and $1=j_1<\cdots <j_k$ with $k\ge 2$ be
finite sets of integers such that every $j_s$ does not divide any
$j_t$ with $1<j_s<j_t$ and fix an index $j_0 \in
J:=\{j_1,\ldots,j_k\}$ with $j_0\ge 2$ such that $j_0\in
I:=\{i_1,\ldots,i_k\}.$

Let $\mathcal V'= \mathsf V(\{\mathbf S_i, \mathbf C_j: i\in I,j\in
J\})$ and $\mathcal V= \mathsf V(\{\mathbf S_i, \mathbf C_j: i\in
I,j\in J\setminus\{j_0\}\}).$  Set $\mathcal V''_{SMMV}$ as the
class of $(\mathbf A,\tau)\in \mathcal V'_{SMMV}$ such that
$\tau(\mathbf A) \in \mathcal V.$  Then $(\mathbf
C_{j_0},\tau_{j_0}) \in \mathcal V''_{{SMMV}}\setminus \mathcal
V_{SMMV}$ and $D(\mathbf C_{j_0}) \in \mathcal V'_{SMMV} \setminus
\mathcal V''_{SMMV}.$

(4) Let $\mathcal V'=\mathsf V(\mathbf S_{i_1},\ldots, \mathbf
S_{i_n}),$ where $1=i_1<\cdots<i_n,$ $n\ge 2$ and every $i_s$ does
not divide any $i_t$ with $1<i_s <i_t.$  Let $i_0 \in
\{i_2,\ldots,i_n\}$ be fixed and let $\mathcal V=\mathsf V(\mathbf
S_i: i \in \{i_1,\ldots,i_n\}\setminus i_0).$  Then $\mathcal V'$ is
a cover of $\mathcal V.$  Let $\mathcal V''$ be the variety
generated by $\mathcal V_{SMMV}$ and $(\mathbf S_{i_0},{\rm
Id}_{S_{i_0}}).$ Then $\mathcal V_{SMMV} \subset \mathcal V''
\subset \mathcal V'_{SMVV}$ because $(\mathbf S_{i_0},{\rm
Id}_{S_{i_0}})\in \mathcal V''\setminus \mathcal V_{SMMV}$ and
$D(\mathbf S_{i_0}) \in \mathcal V'_{SMMV}\setminus \mathcal V''.$

(5) Let $\mathcal V'=\mathsf V(\mathbf C_{j_1},\ldots, S_{j_k}),$
where $1=i_1<\cdots<i_k,$ $k\ge 2$ and every $j_s$ does not divide
any $j_t$ with $1<j_s <j_t.$  Let $j_0 \in \{j_2,\ldots,j_n\}$ be
fixed and let $\mathcal V=\mathsf V(\mathbf C_j: j \in
\{j_1,\ldots,j_k\}\setminus j_0).$ Let $\mathcal V''$ be the variety
generated by $\mathcal V_{SMMV}$ and $(\mathbf S_{i_0},\tau).$ Then
$\mathcal V_{SMMV} \subset \mathcal V'' \subset \mathcal V'_{SMVV}$
because $(\mathbf C_{j_0},{\rm Id}_{C_{j_0}})\in \mathcal
V''\setminus \mathcal V_{SMMV}$ and $D(\mathbf C_{j_0}) \in \mathcal
V'_{SMMV}\setminus \mathcal V''.$

\vspace{3mm}

The above examples offer several interesting methods for obtaining
intermediate varieties. But the fact that if $\mathcal W$ is an
MV-cover of $\mathcal V$, then $\mathcal W_{SMMV}$ need not be a
cover of $\mathcal V_{SMMV}$ can be strengthened:

\begin{theorem}
If $\mathcal W$ properly contains $\mathcal V$, then the join,
$\mathcal V_{SMMV}\vee \mathcal{WI}$, of $\mathcal{V}_{SMMV}$ and
$\mathcal{WI}$, is a proper extension of $\mathcal{V}_{SMMV}$ and a
proper subvariety of $\mathcal W_{SMMV}$. Hence, $\mathcal W_{SMMV}$
can never be a cover of $\mathcal V_{SMMV}$.
\end{theorem}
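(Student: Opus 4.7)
The plan is to establish the two strict inclusions
$$\mathcal{V}_{SMMV}\subsetneq \mathcal{V}_{SMMV}\vee \mathcal{WI}\subsetneq \mathcal{W}_{SMMV},$$
from which the final assertion about covers is immediate, since $\mathcal{V}_{SMMV}\vee \mathcal{WI}$ then witnesses an intermediate variety. The containment $\mathcal{V}_{SMMV}\vee \mathcal{WI}\subseteq \mathcal{W}_{SMMV}$ is clear, because $\mathcal{V}\subseteq \mathcal{W}$ forces $\mathcal{V}_{SMMV}\subseteq \mathcal{W}_{SMMV}$ while $\mathcal{WI}\subseteq \mathcal{W}_{SMMV}$ by definition. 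For the first strict inclusion, any $\mathbf{A}\in \mathcal{W}\setminus \mathcal{V}$ gives $(\mathbf{A},\mathrm{Id}_A)\in \mathcal{WI}\setminus \mathcal{V}_{SMMV}$, since the MV-reduct of the latter is $\mathbf{A}\notin \mathcal{V}$.

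For the harder right strict inclusion, I would pick a subdirectly irreducible $\mathbf{A}\in \mathcal{W}\setminus \mathcal{V}$ (which exists because varieties of MV-algebras are generated by their SI members) and exhibit $D(\mathbf{A})$ as the separating algebra. I would first verify that $D(\mathbf{A})$ is itself subdirectly irreducible: a direct inspection shows that the $\tau$-filters of $D(\mathbf{A})$ are exactly the products $F_1\times F_2$ with $F_1, F_2$ MV-filters of $\mathbf{A}$ satisfying $F_1\subseteq F_2$, so when $\mathbf{A}$ is SI with smallest nontrivial filter $F^{*}$, the smallest nontrivial $\tau$-filter of $D(\mathbf{A})$ is $\{1\}\times F^{*}$.

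To conclude, I would invoke Jónsson's Lemma, which is available because SMMV-algebras form a congruence distributive variety (the congruence lattice of an SMMV-algebra is a sublattice of the distributive MV-congruence lattice). Every SI member of the join $\mathcal{V}_{SMMV}\vee \mathcal{WI}$ then lies in $\mathsf{HSP}_{\mathsf{U}}(\mathcal{V}_{SMMV}\cup \mathcal{WI})$. Both $\mathcal{V}_{SMMV}$ and $\mathcal{WI}$ are varieties, hence closed under $\mathsf H$, $\mathsf S$, and $\mathsf P_{\mathsf U}$; moreover, any ultraproduct of algebras from their union concentrates, via the ultrafilter, on factors from one of the two classes and therefore remains in that class. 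Thus $\mathsf{HSP}_{\mathsf{U}}(\mathcal{V}_{SMMV}\cup \mathcal{WI}) = \mathcal{V}_{SMMV}\cup \mathcal{WI}$, so every SI member of the join already lies in $\mathcal{V}_{SMMV}\cup \mathcal{WI}$. But $D(\mathbf{A})\notin \mathcal{V}_{SMMV}$, since $\mathbf{A}$ is a homomorphic image of $\mathbf{A}\times \mathbf{A}$ and $\mathbf{A}\notin \mathcal{V}$, while $D(\mathbf{A})\notin \mathcal{WI}$, since $\tau_A$ is not the identity on the nontrivial algebra $\mathbf{A}\times \mathbf{A}$. This contradiction settles the strict inclusion.

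The only point requiring genuine care is the ultraproduct bookkeeping in the Jónsson step, specifically the simplification $\mathsf{HSP}_{\mathsf{U}}(\mathcal{V}_{SMMV}\cup \mathcal{WI}) = \mathcal{V}_{SMMV}\cup \mathcal{WI}$, but this rests on the elementary fact that every ultrafilter picks out exactly one of any two complementary index sets; no further technical obstacle is anticipated.
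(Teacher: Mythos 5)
Your proof is correct, but your argument for the key step (the strict inclusion $\mathcal{V}_{SMMV}\vee\mathcal{WI}\subsetneq\mathcal{W}_{SMMV}$) is genuinely different from the paper's. The paper takes an MV-identity $\eta(x)=1$ axiomatizing $\mathcal{V}$ relative to $\mathcal{W}$ and writes down the explicit equation $(\epsilon_V)\colon\ \eta(x)\vee(\tau(y)\leftrightarrow y)=1$, which holds in both $\mathcal{V}_{SMMV}$ (where $\eta(x)=1$) and $\mathcal{WI}$ (where $\tau(y)\leftrightarrow y=1$), hence in their join, but fails in $D(\mathbf{A})$ for a subdirectly irreducible $\mathbf{A}\in\mathcal{W}\setminus\mathcal{V}$. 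You instead avoid any appeal to a relative axiomatization of $\mathcal{V}$ inside $\mathcal{W}$: you use congruence distributivity of SMMV-algebras (which the paper itself invokes in Section 6) and J\'onsson's Lemma, together with the correct observation that $\mathcal{V}_{SMMV}\cup\mathcal{WI}$ is closed under $\mathsf{H}$, $\mathsf{S}$ and $\mathsf{P}_{\mathsf{U}}$ (the ultrafilter concentrates on one of the two index classes), so every subdirectly irreducible member of the join lies in $\mathcal{V}_{SMMV}\cup\mathcal{WI}$; then you check that $D(\mathbf{A})$ is subdirectly irreducible — your $\tau$-filter analysis, resting on the fact that filters of $\mathbf{A}\times\mathbf{A}$ are product filters by congruence distributivity, matches the paper's type-$\mathcal{D}$ case of Theorem \ref{classif}, which you could simply have cited — and lies in neither class. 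The trade-off: the paper's route is shorter and produces an explicit separating identity (useful axiomatization information), but it leans on the nontrivial fact that proper MV-varieties admit one-variable (relative) equational axiomatizations; your route is more structural and would generalize to settings where such an axiomatization is unavailable, at the cost of the J\'onsson bookkeeping and the subdirect irreducibility check, both of which you carry out correctly. The remaining pieces (the easy inclusion, and $(\mathbf{A},\mathrm{Id}_A)$ witnessing properness on the left) coincide with the paper's.
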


\begin{proof}
Inclusions are clear. Moreover, if $\mathbf A \in \mathcal W
\setminus \mathcal V$, then $(\mathbf A, \rm{Id}_A) \in
(\mathcal{WI}\vee \mathcal{V}_{SMMV})\setminus \mathcal{V}_{SMMV}$,
and hence the first inclusion is proper. In order to prove that also
the inclusion $(\mathcal{WI}\vee \mathcal{V}_{SMMV})\subseteq
\mathcal{W}_{SMMV}$, consider an MV-identity $\eta(x)=1$ which
axiomatizes $\mathcal V$ over $\mathcal W$, and set
\medskip

\noindent $(\epsilon_V)\qquad\qquad\qquad\qquad\qquad\qquad \eta(x)
\vee (\tau(y)\leftrightarrow y)=1.$
\medskip

Clearly, $(\epsilon_V)$ holds both in $\mathcal V_{SMMV}$ and in
$\mathcal{WI}$,  and hence it holds in $\mathcal{V}_{SMMV} \vee
\mathcal{WI}$. Now take a subdirectly irreducible MV-algebra
$\mathbf A \in \mathcal W \setminus \mathcal V$. Then $D(\mathbf
A)\in \mathcal{W}_{SMMV}$, but it is readily seen that
$(\epsilon_V)$ is not valid in $D(\mathbf A)$, and also the
inclusion $(\mathcal{V}_{SMMV}\vee \mathcal{WI})\subseteq
\mathcal{W}{SMMV}$ is proper.
\end{proof}

It follows that Problem 2 should be replaced by the following:\medskip

\noindent \textbf{Problem 3.} Suppose that $\mathcal{W}$ is an
MV-cover of $\mathcal{V}$. Is it true that $\mathcal W\mathcal I
\vee \mathcal V_{SMMV}$ is a cover of $\mathcal V_{SMMV}?$

\bigskip We now investigate the number of
varieties of SMMV-algebras, and we prove that there are uncountably
many of them. Let $[0,1]^{*}$ be an ultrapower of the MV-algebra on
$[0,1]$, and let us fix a positive infinitesimal $\varepsilon \in
[0,1]^{*}$. For every set $X$ of prime numbers, we denote by
$\mathbf{A}(X)$ the subalgebra of $[0,1]^{*}$ generated by
$\varepsilon $ and by the set of all rational numbers $\frac{n}{m}$
with $0\leq n\leq m$, and $m>0$ such that:

(1) either $n=0$ or $\gcd (n,m)=1$;

(2) for all $p\in X$, $p$ does not divide $m$.

Note that for all $x\in A(X)$, the standard part of $x$ is a
rational number $\frac{n}{m}$ satisfying (1) and (2). Indeed the
set of rational numbers satisfying (1) and (2) is closed under all
MV-operations.

On $\mathbf{A}(X)$ we define $\tau (x)$ to be the standard part of
$x$. Note that $\tau $ is an idempotent homomorphism from
$\mathbf{A}(X)$ into itself, and hence $(\mathbf{A}(X),\tau )$ is a
linearly ordered SMMV-algebra.

\begin{lemma}\label{unc}
If $X$ and $Y$ are distinct sets of primes, then $\mathbf{A}(X)$ and
$\mathbf{A}(Y)$ generate different varieties.
\end{lemma}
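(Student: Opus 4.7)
The plan is: WLOG there is a prime $p\in X\setminus Y$; I will exhibit an SMMV-equation $\mathcal{E}_p$ valid in $(\mathbf{A}(X),\tau)$ but refuted in $(\mathbf{A}(Y),\tau)$, so that the two generated SMMV-varieties differ. The starting point is the identification $\tau(\mathbf{A}(Z))=\mathbb{Q}_Z$, the MV-subalgebra of $[0,1]_{MV}$ consisting of standard rationals whose denominators (in lowest terms) avoid $Z$; hence $1/p\in\mathbb{Q}_Y$ while $1/p\notin\mathbb{Q}_X$. Every element of $\mathbf{A}(X)$ decomposes as $r+k\varepsilon$ with $r\in\mathbb{Q}_X$ and $k\in\mathbb{Z}$, and $\tau$ projects onto the standard part $r$.

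To locate $1/p$ equationally I introduce the MV-term
$$
t_p(y):=(p\cdot y)\odot\bigl((p-1)\cdot y\leftrightarrow\neg y\bigr).
$$
Since $a\odot b=1$ in $[0,1]_{MV}$ forces $a=b=1$, a short case analysis shows $t_p(r)=1$ iff both $pr=1$ and $(p-1)r=\neg r$, which together force $r=1/p$; elsewhere $t_p(r)<1$. The candidate SMMV-equation is
$$
\mathcal{E}_p:\quad t_p(\tau(x))\;\leq\;\neg\bigl(p\cdot(x\ominus\tau(x))\bigr).
$$
Next I would check the refutation in $(\mathbf{A}(Y),\tau)$: at $x=1/p+\varepsilon$, the LHS equals $t_p(1/p)=1$ but the RHS equals $1-p\varepsilon<1$, so the inequality fails. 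For validity in $(\mathbf{A}(X),\tau)$, take any $x=r+k\varepsilon$ with $r\in\mathbb{Q}_X$; since $p\in X$ we have $r\neq1/p$, hence $1-t_p(r)$ is a positive \emph{standard} rational, whereas the RHS's defect from $1$ is either zero (when $k\leq0$) or the infinitesimal $pk\varepsilon$ (when $k>0$). A positive standard quantity dominates any infinitesimal
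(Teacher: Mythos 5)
Your proposal is correct and follows essentially the same route as the paper: both arguments separate $\mathbf{A}(X)$ from $\mathbf{A}(Y)$ by an SMMV-inequality that detects whether $\frac{1}{p}$ occurs as a standard part (the paper uses $(p-1)x\leftrightarrow\neg x$ together with its $\tau$-image, you use $t_p(\tau(x))$), both exploit that a standard positive gap dominates an infinitesimal one after decomposing elements as standard part plus integer multiple of $\varepsilon$, and both refute the equation at the same witness $x=\frac{1}{p}+\varepsilon$. The only difference is cosmetic packaging of the infinitesimal: the paper's equation (c$_p$) squares the left-hand side to create the standard gap, while you isolate the infinitesimal defect on the right via $\neg\bigl(p\cdot(x\ominus\tau(x))\bigr)$.
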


\begin{proof}
Without loss of generality, we may assume that there is a prime $p$
such that $p\in X\backslash Y$. Consider the equations:

(a$_p$) $(p-1)x\leftrightarrow \lnot x=1$

(b$_p$) $\tau ((p-1)x)\leftrightarrow \tau (\lnot x)=1$

(c$_p$) $(\tau ((p-1)x)\leftrightarrow \tau (\lnot x))^{2}\leq
((p-1)x\leftrightarrow \lnot x)$.

The following claims are easy to prove, recalling that
$\frac{1}{p}\in A(Y)\backslash A(X)$:

$\emph{Claim 1.}$  Equation (a$_p$) has no solution in $(\mathbf{A}
(X),\tau )$, and its only solution in $(\mathbf{A}(Y),\tau )$ is
$\frac{1}{p} $.

\emph{Claim 2}. Equation (b$_p$) has no solution in
$(\mathbf{A}(X),\tau )$, and its solutions in $(\mathbf{A}(Y),\tau
)$ are precisely those real numbers in $A(Y)$ whose standard part
is $\frac{1}{p}$.

\emph{Claim 3}. In both $(\mathbf{A}(X),\tau )$ and
$(\mathbf{A}(Y),\tau )$, for every $x$, $\tau
((p-1)x)\leftrightarrow \tau (\lnot x)$ is the standard part of
$(p-1)x\leftrightarrow \lnot x$.

\vspace{2mm}

Now consider the equation (c$_p$).

\vspace{2mm} \emph{Claim 4.} Equation (c$_p$) is valid in
$(\mathbf{A}(X),\tau )$ and it is not valid in $(\mathbf{A}(Y),\tau
)$.

\vspace{2mm}

\emph{Proof of Claim 4}. Let $x\in A(X)$, let $\alpha =\tau
((p-1)x)\leftrightarrow \tau (\lnot x)$ and $\beta
=(p-1)x\leftrightarrow \lnot x$. By Claims 2 and 3, $\alpha $ is a
real number strictly less than $1$, and differs from $\beta $ by an
infinitesimal. Hence, $\alpha ^{2}$ is either $0$ or a real strictly
smaller than $\alpha $, and hence it is smaller than $\beta $.\ It
follows that (c$_p$) holds in $(\mathbf{A}(X),\tau )$.

Now we prove that equation (c$_p$) is not valid in
$(\mathbf{A}(Y),\tau )$. Let $ x=\frac{1}{p}+\varepsilon $. Then
$x\in A(Y)$. Moreover, by Claim 2, $\tau ((p-1)x)\leftrightarrow
\tau (\lnot x)=(\tau ((p-1)x)\leftrightarrow \tau (\lnot x))^{2}=1$,
and by Claim 1,

$(p-1)x\leftrightarrow \lnot x=(\frac{1}{p}-(p-1)\varepsilon
)+(1-\frac{1}{p} -\varepsilon )=1-p\varepsilon <1$.

Thus, equation (c$_p$) is not valid in $\mathbf{A}(Y)$. This concludes
the proof of Claim 4, and hence of Lemma
\ref{unc}.
\end{proof}

We can say more:

\begin{theorem}\label{uncountable}
Let $\mathcal{MV}$ denote the variety of all MV-algebras. Then there
are uncountably many varieties between $\mathcal{MVI}$ and
$\mathcal{MVR}$.
\end{theorem}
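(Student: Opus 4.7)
The plan is to lift the family of linearly ordered SMMV-algebras $\{(\mathbf{A}(X),\tau): X \subseteq \mathrm{Primes}\}$ produced in Lemma~\ref{unc} into a family of distinct varieties that all sit in the interval $[\mathcal{MVI},\mathcal{MVR}]$. For each set of primes $X$, let $\mathcal{V}_X$ denote the join (in the subvariety lattice of $\mathcal{SMMV}$) of $\mathcal{MVI}$ with $\mathsf{V}(\mathbf{A}(X),\tau)$. There are $2^{\aleph_0}$ choices of $X$, so it suffices to establish two things: that every $\mathcal{V}_X$ lies between $\mathcal{MVI}$ and $\mathcal{MVR}$, and that distinct $X$ give distinct $\mathcal{V}_X$.

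For the interval inclusion, $\mathcal{MVI}\subseteq\mathcal{V}_X$ is automatic from the definition of the join. For the upper inclusion, $(\mathbf{A}(X),\tau)$ was built to be a linearly ordered SMMV-algebra, hence representable, so $\mathsf{V}(\mathbf{A}(X),\tau)\subseteq\mathcal{MVR}$; combined with the already established inclusion $\mathcal{MVI}\subseteq\mathcal{MVR}$, this yields $\mathcal{V}_X\subseteq\mathcal{MVR}$.

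For injectivity of $X\mapsto\mathcal{V}_X$, take distinct $X,Y$, and, without loss of generality, a prime $p\in X\setminus Y$. The crucial observation is that the separating equation $(c_p)$ from Lemma~\ref{unc}, namely
\[
(\tau((p-1)x)\leftrightarrow\tau(\neg x))^{2}\leq (p-1)x\leftrightarrow\neg x,
\]
is automatically valid in $\mathcal{MVI}$: there $\tau=\mathrm{Id}$, so $(c_p)$ collapses to the MV-tautology $a^{2}\leq a$. By Claim~4 in the proof of Lemma~\ref{unc}, $(c_p)$ also holds in $(\mathbf{A}(X),\tau)$. Since the join of two varieties satisfies precisely those equations that hold in both summands, $(c_p)$ is valid throughout $\mathcal{V}_X$. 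On the other hand $(\mathbf{A}(Y),\tau)\in\mathcal{V}_Y$ and, again by Claim~4 of Lemma~\ref{unc}, $(c_p)$ fails in $(\mathbf{A}(Y),\tau)$. Hence $\mathcal{V}_X\neq\mathcal{V}_Y$.

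Since there are continuum many sets of primes, this gives continuum many pairwise distinct varieties in $[\mathcal{MVI},\mathcal{MVR}]$, which is more than enough for the theorem. The only point requiring any care is the claim that $(c_p)$ survives the join defining $\mathcal{V}_X$, and this is immediate once one notices that $(c_p)$ degenerates in $\mathcal{MVI}$; everything else is bookkeeping on top of Lemma~\ref{unc}.
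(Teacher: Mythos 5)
Your proposal is correct and takes essentially the same route as the paper: both arguments rest entirely on Lemma \ref{unc} (in particular Claim 4) together with the observation that $(c_p)$ degenerates to the MV-valid $a^{2}\leq a$ when $\tau$ is the identity, so that $(c_p)$ holds throughout $\mathcal{MVI}$ while failing in $(\mathbf{A}(Y),\tau)$ for $p\in X\setminus Y$. The only difference is packaging: the paper defines its continuum family $\mathcal{V}(X)$ equationally (axiomatized by $(lin_\tau)$ and the equations $(c_p)$, $p\in X$), whereas you define $\mathcal{V}_X$ as the join $\mathcal{MVI}\vee\mathsf{V}(\mathbf{A}(X),\tau)$ and separate these joins by the same equations and the same witnessing algebras.
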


\begin{proof}
Consider, for every set $X$ of prime numbers, the variety
$\mathcal{V}(X)$ axiomatized by $(lin_\tau)$ and by all equations
(c$_p$) with $p \in X$. Clearly, $\mathbf{A}(X) \in \mathcal{V}(X)$
for every set $X$ of primes. By Lemma \ref{unc}, different sets of
primes originate different varieties, and hence there is a continuum
of varieties of the form $\mathcal{V}(X)$. Moreover, both equations
$(lin_\tau)$ and  (c$_p$) hold in all SMMV-algebras of type
$\mathcal{I}$, and hence $\mathcal{MVI}\subseteq \mathcal{V}(X)$ for
any set $X$ of primes. Finally, since $(lin_\tau)$ is an axiom of
every $\mathcal{V}(X)$, we have $\mathcal{V}(X) \subseteq
\mathcal{MVR}$.
\end{proof}

\begin{corollary}
There are varieties of representable SMMV-algebras which are not
recursively axiomatizable, and hence not finitely axiomatizable.
\end{corollary}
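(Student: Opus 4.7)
The plan is to deduce the corollary from Theorem \ref{uncountable} by a standard cardinality argument. The signature of SMMV-algebras is finite, so the set of equations in this language is countable, and hence the collection of recursive (equivalently, recursively enumerable) sets of equations is countable as well. Consequently, there can be at most countably many recursively axiomatizable varieties of SMMV-algebras in total, and a fortiori at most countably many recursively axiomatizable subvarieties of $\mathcal{MVR}$.

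On the other hand, Theorem \ref{uncountable} produces a family $\{\mathcal{V}(X) : X \subseteq \mathrm{Primes}\}$ of continuum many distinct varieties all sitting between $\mathcal{MVI}$ and $\mathcal{MVR}$. In particular each $\mathcal{V}(X)$ is a variety of representable SMMV-algebras. Comparing cardinalities, all but countably many of the $\mathcal{V}(X)$ must fail to be recursively axiomatizable; pick any such $X$ to exhibit a variety of representable SMMV-algebras that is not recursively axiomatizable.

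Finally, I would note the implication from non-recursive to non-finite axiomatizability: any finite set of equations is trivially a recursive (indeed decidable) set, so a finitely axiomatizable variety is automatically recursively axiomatizable. Contrapositively, the variety just produced, being not recursively axiomatizable, cannot be finitely axiomatizable. No serious obstacle is expected; the only point requiring minimal care is the observation that ``recursively axiomatizable'' is equivalent to ``axiomatizable by a recursively enumerable set of equations'', which is standard, and the use of Theorem \ref{uncountable} to guarantee continuum many (hence more than countably many) distinct varieties within $\mathcal{MVR}$.
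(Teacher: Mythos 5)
Your cardinality argument is correct and is exactly the intended deduction from Theorem \ref{uncountable}: only countably many subvarieties of $\mathcal{MVR}$ can be recursively (hence finitely) axiomatizable, while the theorem provides continuum many. The paper states the corollary without further proof, and your argument matches the obvious route it relies on.
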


\end{document}